\documentclass[11pt,a4paper]{amsart}
\usepackage[T1]{fontenc}
\usepackage[utf8]{inputenc}
\usepackage{lmodern,amsmath,amssymb,amsthm,mathtools,mathrsfs}
\usepackage[margin=28mm]{geometry}
\usepackage[expansion=false]{microtype}
\usepackage{enumitem,booktabs}
\usepackage{xcolor}
\definecolor{linkblue}{RGB}{28,54,84}
\usepackage[colorlinks=true,linkcolor=linkblue,citecolor=linkblue,urlcolor=linkblue]{hyperref}
\hypersetup{pdftitle={Rigidity, sharp inequalities, and stability for sigma-two curvature},pdfauthor={Wangzhe Wu}}
\numberwithin{equation}{section}
\newtheorem{theorem}{Theorem}[section]
\newtheorem{proposition}[theorem]{Proposition}
\newtheorem{lemma}[theorem]{Lemma}
\newtheorem{corollary}[theorem]{Corollary}
\newtheorem{maintheorem}{Theorem}

\theoremstyle{definition}

\theoremstyle{remark}
\newtheorem{remark}[theorem]{Remark}
\newcommand{\Ric}{\operatorname{Ric}}
\newcommand{\tr}{\operatorname{tr}}
\newcommand{\Vol}{\operatorname{Vol}}

\newcommand{\dd}{\mathrm d}
\setlist{itemsep=2pt,topsep=4pt}
\allowdisplaybreaks[2]
\title[Rigidity and stability for $\sigma_2$-curvature]
{Rigidity, sharp inequalities, and stability for $\sigma_2$-curvature}
\author{Wangzhe Wu}
\address[Wangzhe Wu]{School of Mathematics and Statistics,
Ningbo University, Ningbo, Zhejiang, People's Republic of China}
\email{wuwz18@mail.ustc.edu.cn}
\date{}
\subjclass[2020]{Primary 53C21; Secondary 35J60, 53C24, 58J32}
\keywords{Conformal geometry, divergence identity, sigma-two curvature, boundary rigidity, sharp inequality, quantitative stability}
\begin{document}
\begin{abstract}
Using classical conformal divergence identities with a variable
reference curvature introduced in \cite{CWSharp}, we prove four main theorems.
\begin{enumerate}[label=\textup{(\roman*)},leftmargin=*,itemsep=2pt,topsep=3pt]
\item We establish a general mean-curvature estimate for conformal
metrics on the round hemisphere with $A_g\in\overline{\Gamma_2^+}$
and positive prescribed $H_2$ data
(Theorem~\ref{main:boundary}). When $\sigma_2(A_g)=0$ and the
boundary data are nonincreasing, the estimate yields rigidity,
removing Case--Wang's pinching condition
$\sup_\Sigma H_g\le3\inf_\Sigma H_g$~\cite{CW18}.
For $n\ge5$, this rigidity for constant boundary data also
classifies the smooth critical metrics associated with
Case--Wang's sharp $\sigma_2$ Sobolev trace conjecture~\cite{CW20}.
\item Within positive Einstein conformal classes, we extend the
constant-$\sigma_2$ rigidity of Viaclovsky~\cite{V00} and
Gursky--Streets~\cite{GS18} to nonincreasing prescribed data,
including backgrounds with nonzero Weyl curvature for $n\ge5$
(Theorem~\ref{main:monotone}).
\item We extend Li--Li's spherical $\sigma_2/\sigma_1$
rigidity~\cite{LiLi03} and Guan--Wang's sharp integral
inequality~\cite{GuanWang04} to positive Einstein backgrounds,
with the latter holding for $n\ge5$ under positive scalar curvature
(Theorem~\ref{main:quotient}).
\item We extend Frank--Peteranderl's spherical
$\sigma_2$-stability~\cite{FP24} to fixed nonround positive Einstein
backgrounds, $n\ge5$, retaining $H^1$ and
$W^{1,4}$ control under positive scalar curvature
(Theorem~\ref{main:stability}).
\end{enumerate}
\end{abstract}
\maketitle
\begingroup
\setcounter{tocdepth}{1}
\small
\tableofcontents
\endgroup
\clearpage

\section{Introduction}

Gursky's classical integral identity~\cite{Gursky97} provides a
fourth-order counterpart of Obata's divergence argument~\cite{Obata71}
in conformal geometry. By pairing the trace-free Ricci tensor with the Hessian of
the scalar curvature and integrating by parts, it relates a prescribed
curvature equation to weighted curvature defects. Gursky used this
identity to prove uniqueness, up to conformal transformations and
scaling, of critical metrics for the determinants of the conformal
Laplacian and the square of the Dirac operator on the round
four-sphere. His work also produced a Pohozaev identity on manifolds
with boundary admitting conformal Killing fields.

This method subsequently led to rigidity for constant $Q$-curvature
on closed positive Einstein backgrounds in the work of
V\'etois~\cite{Vetois23}, and for the mixed curvature
$I_a(g)\coloneqq Q_g+a\sigma_2(A_g)$ in the work of
Case~\cite{Case24}. Li--Wei~\cite{LiWei25} recorded the identity
without assuming that $I_a(g)$ is constant and sharpened its rigidity
consequences. To display the identity in its modern form, let
$g=u^2g_0$ on a closed Einstein manifold of dimension $n\ge3$, and
write $E_g\coloneqq\Ric_g- \frac{R_g}{n}g$ for the trace-free Ricci tensor.
Here $R_g$ is the scalar curvature, $A_g$ is the Schouten tensor,
and all derivatives and contractions below use $g$. The
Case--Gursky--V\'etois identity reads
\begin{equation}\label{intro:classical-identity}
\begin{split}
 &2(n-1)^2\int_M\langle\nabla I_a(g),\nabla u\rangle_g\,\dd v_g\\
 &\quad=\frac12\int_Mu|\nabla R_g|^2\,\dd v_g
 +\frac{n-(n-1)(a+4)}{n-2}
       \int_ME_g(\nabla R_g,\nabla u)\,\dd v_g\\
 &\qquad\quad+\frac1{2(n-2)^2}\int_Mu^{-1}|E_g|^2
       \Bigl[n(n-1)^2(a+4)|\nabla u|^2\\
 &\hspace{43mm}
       +\bigl((n-1)(a+4)+2n(n-2)\bigr)u^2R_g
       +(n-1)(a+4)R_{g_0}\Bigr]\,\dd v_g;
\end{split}
\end{equation}
see~\cite[Theorem~2.1]{LiWei25}. Gursky's original spherical
four-dimensional identity is the starting point of this development;
\eqref{intro:classical-identity} contains the later extensions.
For constant $I_a(g)$, the left-hand side vanishes. The mixed contraction
$E_g(\nabla R_g,\nabla u)$ is the term whose control restricts the
ranges of the parameter $a$ in the earlier rigidity arguments.

In the locally conformally flat setting, Ma--Wu~\cite{MaWu25}
used a divergence structure to establish weak continuity of the
measures associated with the $\sigma_k$-Yamabe operator.

Differential identities also play a role in fourth-order
Liouville theory; see Ma--Wu--Wu~\cite{MWW25}, Ma~\cite{Ma26}, and
Ma--Wu--Zhou~\cite{MWZ25} for related semilinear problems.
The Euclidean classifications of Lin~\cite{Lin98} and
Wei--Xu~\cite{WeiXu99} provide further background.

In \cite{CWSharp}, we first introduced a
\emph{reference scalar curvature} into this divergence method.
The key result is Proposition~2.2 in \cite{CWSharp}. For any Einstein
background of dimension $n\ge4$, any smooth positive $u$, and any
real constants $a$ and $r$, it gives the pointwise identity
\begin{equation}\label{intro:companion-prop22}
\begin{split}
 &\operatorname{div}_g\!\left[u\nabla R_g
 -\frac{(n-1)a+n^2+2n-4}{n-2}E_g\nabla u
 +\frac{(n-1)a+n^2-4}{2n}(R_g-r)\nabla u\right]\\
 &\quad=\frac n{n-2}u|E_g|^2
 +\frac{(n-1)a+n^2-4}{4n(n-1)}(R_g-r)
       \left(n(n-1)u^{-1}|\nabla u|^2+ur+u^{-1}R_{g_0}\right)\\
 &\qquad\quad+u\left[
       \frac{(n-1)a+n^2-4}{4n(n-1)}r^2-2(n-1)I_a(g)\right].
\end{split}
\end{equation}
We remark that \eqref{intro:companion-prop22} also holds when $n=3$;
the dimension range is explained in Section~\ref{sec:identities}.
Unlike the higher-order expression in
\eqref{intro:classical-identity}, this formula contains no mixed
contraction $E_g(\nabla R_g,\nabla u)$. It holds without imposing a
curvature equation, compactness, or a sign condition on the scalar curvature.
Its proof combines the pointwise Obata identity with the shifted
Newton identity of~\cite[Proposition~2.1]{CWSharp}.
Choosing $r=\min_MR_g$ and using the curvature equation at a
minimum point yields the improved mixed-curvature rigidity range
in~\cite{CWSharp}.

The present paper develops this method further for
$\sigma_2$-curvature and its boundary operator. We choose the
reference curvature $r$ to match the prescribed curvature data.
For $r=r(u)$, the additional term
$-(n-1)\langle\nabla r,\nabla u\rangle_g/(2n)$ in the Newton
identity~\eqref{id:variable-formula} makes it possible to use the
monotonicity of the data in the rigidity argument.

For the estimate in Theorem~\ref{main:boundary}, we choose the
boundary reference
$r=(n-1)(n-2)(3F(u)/(n-1))^{2/3}$, where $F(u)= H_2(g)$.
For Theorem~\ref{main:monotone}, we take
$r=\sqrt{8n(n-1)F(u)}$ for the prescribed interior curvature, where 
$F(u) = \sigma_2(A_g)$.
For Theorem~\ref{main:quotient}, we take $r=\min_MR_g$.
Theorem~\ref{main:stability} builds on the sharp integral
comparisons in Theorem~\ref{main:quotient}.

We first introduce the notation, then present these four theorems
with the main ideas of their proofs.

\paragraph{\textbf{Curvature notation.}}
Throughout the paper, $n$ denotes the dimension of the manifold.
A closed manifold is compact without boundary. For $n\ge3$, set
\[
 A_g\coloneqq\frac1{n-2}\left(\Ric_g-\frac{R_g}{2(n-1)}g\right),
 \qquad E_g\coloneqq\Ric_g-\frac{R_g}{n}g=(n-2)A_g^\circ.
\]
For a symmetric covariant tensor $T$, $g^{-1}T$ denotes the
endomorphism obtained by raising one index,
$\tr_gT$ its trace, and
$T^\circ\coloneqq T-(\tr_gT)g/n$ its trace-free part.
If $\lambda_1,\ldots,\lambda_n$ are the eigenvalues of $g^{-1}A_g$,
then
\[
 \sigma_k(A_g)\coloneqq
 \sum_{1\le i_1<\cdots<i_k\le n}\lambda_{i_1}\cdots\lambda_{i_k},
 \qquad \sigma_0(A_g)\coloneqq1.
\]
In particular,
\[
 \sigma_1(A_g)=\frac{R_g}{2(n-1)},\qquad
 2\sigma_2(A_g)=\sigma_1(A_g)^2-|A_g|_g^2 = \frac{R_g^2}{4n(n-1)}
                    -\frac{|E_g|_g^2}{(n-2)^2}.
\]
The first and second Newton tensors are
\[
 T_1(A_g)\coloneqq\sigma_1(A_g)g-A_g,
 \qquad
 T_2(A_g)\coloneqq\sigma_2(A_g)g-\sigma_1(A_g)A_g+A_g^2.
\]
Tensor powers and products use the associated endomorphisms, and
all traces, contractions, and norms use the indicated metric.
We write
\[
 \Gamma_k^+\coloneqq\{\lambda\in\mathbb R^n:
              \sigma_j(\lambda)>0\text{ for }1\le j\le k\}.
\]
Thus $A_g\in\Gamma_k^+$ means that its eigenvalue vector lies in
this cone; $\overline{\Gamma_2^+}$ denotes its closure.
The Weyl tensor $W_g$ is the trace-free part of the Riemann
curvature tensor $\operatorname{Rm}_g$:
\[
 \begin{aligned}
 (W_g)_{ijkl}\coloneqq{}& (\operatorname{Rm}_g)_{ijkl}
 -(A_g)_{ik}g_{jl}-(A_g)_{jl}g_{ik}\\
 &+(A_g)_{il}g_{jk}+(A_g)_{jk}g_{il}.
 \end{aligned}
\]
Here $(\operatorname{Rm}_g)_{ijij}$ is the sectional curvature of
the plane spanned by orthonormal vectors $e_i,e_j$.
Our normalization of Branson's curvature is
\[
 Q_g\coloneqq-\Delta_g\sigma_1(A_g)-2|A_g|_g^2
                       +\frac n2\sigma_1(A_g)^2.
\]

\paragraph{\textbf{Differential operators and boundary geometry.}}
We denote the connection and gradient by $\nabla_g$, the Hessian
by $\nabla_g^2$, and set
$\Delta_g\coloneqq\tr_g\nabla_g^2=\operatorname{div}_g\nabla_g$;
thus $-\Delta_g$ has nonnegative spectrum on a closed manifold.
For a covariant two-tensor,
$(\operatorname{div}_gT)_j\coloneqq\nabla^iT_{ij}$.
Repeated upper and lower indices are summed, and the metric
identifies one-forms with vectors when required.
The measures $\dd v_g$ and $\dd\sigma_g$ are the volume and induced
boundary area measures, respectively. We write
$\Vol_g(M)\coloneqq\int_M\dd v_g$ and
$\operatorname{Area}_g(\Sigma)\coloneqq\int_\Sigma\dd\sigma_g$.
We denote by $H_g$ the mean curvature of the boundary
$\Sigma=\partial M$ as a hypersurface in $(M,g)$.
It is the sum of the boundary's principal curvatures with respect
to the outward unit normal, divided by $n-1$.
With this convention, $H_\delta=1$ on $\partial\mathbb B^n$
for the Euclidean metric $\delta$.
We write $\bar g=g|_{T\Sigma}$ for the metric induced on the boundary
$\Sigma$. The symbols $\bar\nabla$ and $\bar\Delta$ denote the
covariant derivative and Laplace operator associated with $\bar g$.

\paragraph{\textbf{Conformal metrics and function spaces.}}
Our conformal convention is
\[
 [g_0]\coloneqq\{u^2g_0:u\in C^\infty(M),\ u>0\}.
\]
The function $u$ is the conformal length factor. This convention
agrees with~\cite{LiWei25,CWSharp}; we write $R_{g_0}$ for the
background scalar curvature. In the closed-manifold results,
$(M,g_0)$ is connected and $g_0$ is a positive Einstein metric,
meaning $\Ric_{g_0}=(R_{g_0}/n)g_0$ with $R_{g_0}>0$.
Whenever a conformal metric is Einstein, Obata's
classification~\cite{Obata62,Obata71} gives $g=s^2g_0$ for $s>0$,
unless the background is a round sphere up to scaling; in that case
$g=s^2\Phi^*g_0$ for a conformal diffeomorphism $\Phi$.
The symbol $\Phi^*$ denotes pullback.

The unit Euclidean ball is $\mathbb B^n$, with metric $\delta$;
$\mathbb S^n_+$ is the closed unit round hemisphere, with metric
$g_{\mathrm{rd}}$. The notation $|\mathbb S^{n-1}|$ denotes the area
of the unit $(n-1)$-sphere. Smoothness on the ball or hemisphere is
understood up to the boundary. We use the usual spaces $C^j$,
$C^{j,\mu}$, $0<\mu<1$, and $C^\infty$, and set
\[
 \|f\|_{L^p(g)}^p\coloneqq\int_M|f|^p\,\dd v_g,
 \qquad
 \|f\|_{W^{1,p}(g)}^p\coloneqq
        \int_M(|f|^p+|\nabla_gf|_g^p)\,\dd v_g
 \quad(1\le p<\infty),
\]
with $H^1\coloneqq W^{1,2}$ and the usual essential-supremum norm
for $L^\infty$. A norm subscript $p$ abbreviates $L^p$ in the
metric specified locally. The letters $c,C$, with subscripts when
needed, denote constants in estimates; their dependence is stated
locally. Prescribed curvature values are denoted by $\Lambda$,
and $F'$ denotes differentiation in the scalar argument of $F$.
The conformal factors $z$ in the stability theorem and $v$ in the
variational application will be defined explicitly below.

\subsection{A mean-curvature estimate and boundary rigidity}

Escobar~\cite{Escobar92} established scalar-flat boundary rigidity
and a sharp trace inequality, providing the model for the boundary
problem. Chen~\cite{SChen09} introduced the boundary
curvatures associated with the Chern--Gauss--Bonnet formula, and
Case--Wang~\cite{CW18} established their variational properties.

A boundary is called \emph{umbilic} if all its principal curvatures
are equal at each point, equivalently, if its second fundamental
form satisfies $\mathrm{II}=H_g\bar g$. The value of $H_g$ may vary
along the boundary.
For $\sigma_2$, the natural boundary term on an umbilic boundary is
\[
 H_2(g)\coloneqq
 H_g\tr_{\bar g}(A_g|_{T\Sigma})+\frac{n-1}{3}H_g^3,
 \qquad \bar g\coloneqq g|_{T\Sigma}.
\]
Case--Wang~\cite[Theorem~1.2]{CW18} proved flatness on the round
hemisphere when $\sigma_2(A_g)=0$, $H_2(g)$ is a positive constant,
and the additional pinching condition
\begin{align}\label{0908equ1}
 \sup_\Sigma H_g\le3\inf_\Sigma H_g
 \end{align}
holds. They expected that condition~\eqref{0908equ1} could be removed.
This boundary problem is directly related to Case--Wang's sharp
$\sigma_2$ Sobolev trace conjecture~\cite{CW20}. For $n\ge5$,
the Euler--Lagrange equations of the associated curvature functional,
with boundary area fixed, are $\sigma_2(A_g)=0$ in $\mathbb B^n$
and $H_2(g)=\mathrm{constant}$ on $\partial\mathbb B^n$.
Stereographic projection gives a conformal equivalence between the
round hemisphere and the Euclidean unit ball. The same boundary
problem can therefore be formulated in either model.
Applied to this system, the general estimate in
Theorem~\ref{main:boundary} yields the critical-point rigidity in
part~\textup{(i)} under the stated admissibility assumptions,
removing the pinching condition~\eqref{0908equ1}.
The full conjecture follows if the subcritical problems have smooth minimizers, as explained in Subsection~\ref{var:section}.
Related classifications with positive interior curvature were
obtained by Wei~\cite{Wei26} and Chu--Li--Li~\cite{ChuLiLi};
Chen--Wei~\cite{ChenWeiBall} treat a zero-curvature branch with
ordinary constant mean curvature.

Our first result gives a general mean-curvature estimate for metrics
with $A_g\in\overline{\Gamma_2^+}$ and positive $H_2$ data.
When $\sigma_2(A_g)=0$ and the data are nonincreasing, it yields
rigidity without the pinching condition~\eqref{0908equ1}.
To state it, take the conformal Killing field
$X\coloneqq-\nabla_{g_{\mathrm{rd}}}x_{n+1}$ on
$\mathbb S^n_+\subset\mathbb R^{n+1}$, as in
Case--Wang~\cite[proof of Lemma~5.4]{CW18}.
For $g=u^2g_{\mathrm{rd}}$, its conformal factor is
\[
 \chi\coloneqq\frac1n\operatorname{div}_gX=x_{n+1}+X(\log u).
\]

\begin{maintheorem}[A mean-curvature estimate and boundary rigidity]
\label{main:boundary}
\label{bnd:unpinched}
\label{bnd:monotone}
\label{bnd:power-law}
\label{bnd:quantitative}
Let $g=u^2g_{\mathrm{rd}}$ be a smooth positive conformal metric on
$M\coloneqq \mathbb S^n_+$, $n\geq4$, with $A_g\in\overline{\Gamma_2^+}$.
Write $\Sigma\coloneqq \partial M$. Suppose that $H_2(g)=F(u)$
on $\Sigma$, where $F$ is a positive $C^1$ function on an interval
containing the range of $u|_\Sigma$. Define
\[
 h(u)\coloneqq \left(\frac{3F(u)}{n-1}\right)^{1/3},
\]
and let $\chi$ be as in \eqref{bnd:CK-field}. Then
\begin{align}
 \label{bnd:general-estimate}
 &\frac{3\sqrt3}{2}
       \int_\Sigma F(u)\left(1-\frac{H_g}{h(u)}\right)^2
                         \,\mathrm{d}\sigma_g -\frac1{n-1}\int_\Sigma \frac{F'(u)}{h(u)}|\bar\nabla u|^2
                         \,\mathrm{d}\sigma_g
 \leq2\int_M\chi\sigma_2(A_g)\,\dd v_g.
\end{align}
In particular, the following conclusions hold.
\begin{enumerate}[label=\textup{(\roman*)},leftmargin=*]
\item If $\sigma_2(A_g)=0$ in $M$ and $F'\leq0$ on the boundary
range of $u$, then $(M,g)$ is isometric to a Euclidean ball.
If $F'<0$ on that range, then $u|_\Sigma=a$ for some $a>0$ and,
in the standard hemisphere coordinates,
\begin{equation}
 \label{bnd:monotone-flat-factor}
 u=\frac{a}{1+x_{n+1}},
 \qquad H_g=\frac1a,
 \qquad H_2(g)=\frac{n-1}{3a^3}.
\end{equation}
\item If $F=\Lambda>0$ is constant, then $h=(3\Lambda/(n-1))^{1/3}$ and
\begin{align}
 \label{bnd:defect-estimate}
 \Lambda\int_\Sigma\left(1-\frac{H_g}{h}\right)^2\,\mathrm{d}\sigma_g
 &\leq\frac{4}{3\sqrt3}
       \int_M\chi\sigma_2(A_g)\,\dd v_g\notag\\
 &\leq\frac{4}{3\sqrt3}\|\chi\|_{L^\infty(M)}
       \int_M\sigma_2(A_g)\,\dd v_g.
\end{align}
\end{enumerate}
\end{maintheorem}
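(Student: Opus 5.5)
The plan is to follow Case--Wang's Pohozaev-type argument on the hemisphere with the conformal Killing field $X=-\nabla_{g_{\mathrm{rd}}}x_{n+1}$, and to strengthen its boundary analysis using a variable reference curvature on $\Sigma$, as described in the introduction.

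\textbf{Interior step.} Integrate the divergence identity $\operatorname{div}_g(T_1(A_g)(X,\cdot))$ over $M$. Since $X$ is conformal Killing with $\mathcal L_Xg=2\chi g$, and $T_1$ is divergence free in the locally conformally flat setting, the interior term is $\langle T_1(A_g),\nabla X\rangle=(n-1)\sigma_1(A_g)\chi$. This gives only a $\sigma_1$-type identity, so instead use the $\sigma_2$ Kazdan--Warner/Pohozaev identity in the form
\[
\int_M X(\sigma_2(A_g))\,\dd v_g=\text{boundary terms}.
\]
Integrating by parts converts the left side to $-n\int_M\chi\sigma_2\,\dd v_g$ plus the flux of $\sigma_2\langle X,\nu\rangle$. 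On $\partial\mathbb S^n_+$, $X$ is tangent to the equator up to conformal change; more precisely $X=-\nabla x_{n+1}$ is normal there, with $\langle X,\nu_g\rangle=u^{-1}\cdot$const. I would instead write the boundary flux through Chen's boundary structure: $T_1(A_g)(X,\nu)$ together with the tangential divergence of a term involving $H_g$. The result should be an identity
\[
2\int_M\chi\sigma_2(A_g)\,\dd v_g=\int_\Sigma \mathcal B\,\dd\sigma_g,
\]
where $\mathcal B$ is a cubic polynomial expression in $H_g$, $\tr_{\bar g}A_g$, and $X(\log u)$. On the equator $x_{n+1}=0$, so $\chi=X(\log u)$, which relates to $H_g$ via the conformal change formula $H_g=u^{-1}(H_{\mathrm{rd}}+\partial_\nu\log u)$ with $H_{\mathrm{rd}}=0$. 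Hence $\chi|_\Sigma=-uH_g$ up to sign. This matches Case--Wang's proof of Lemma~5.4.

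\textbf{Boundary step.} Express $\tr_{\bar g}(A_g|_{T\Sigma})$ from $H_2(g)=F(u)$ as $(F-\tfrac{n-1}{3}H^3)/H$, and substitute. The boundary integrand then becomes $u$ times a function of $H$ and $h$, plus a tangential term of the form $\bar\Delta$ applied to a function of $u$, possibly with a factor $H$. Introducing the reference $r=(n-1)(n-2)h(u)^2$ is exactly what allows this tangential term to be integrated by parts against $h(u)$. That produces $-\frac{1}{n-1}\int F'(u)h^{-1}|\bar\nabla u|^2$, because $\bar\nabla h=\frac{F'}{(n-1)h^2}\bar\nabla u$. The remaining pointwise cubic $\phi(t)$, with $t=H/h$, should take the shape $F\,(2-3t+t^3)/t$ or similar. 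I would then use the elementary inequality $\phi(t)\ge\frac{3\sqrt3}{2}(1-t)^2$ for $t>0$. Here $2-3t+t^3=(1-t)^2(t+2)$, and one minimizes the ratio $(t+2)/t^{\,\cdot}$; the constant $3\sqrt3/2$ suggests an optimization such as $\min_t (t+2)/(2\sqrt t)\cdot\ldots$. This step also uses $H_g>0$, which I would derive from $F>0$ and $A_g\in\overline{\Gamma_2^+}$, since $\tr A\ge0$ on the boundary.

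\textbf{Consequences.} For (i), $\sigma_2=0$ and $F'\le0$ force $H_g\equiv h(u)$, and for $F'<0$ also $\bar\nabla u=0$. The inequality then becomes an equality, so the equality case of the Pohozaev identity gives $\tr A$-rigidity. Combined with $\sigma_2=0$ and $A\in\overline{\Gamma_2^+}$, this should yield $A_g^\circ=0$, hence Einstein. Obata and flatness then give the explicit factor \eqref{bnd:monotone-flat-factor}, with $H_2=\frac{n-1}3a^{-3}$. For (ii), set $F'=0$ and use $\chi\le\|\chi\|_\infty$ together with $\sigma_2\ge0$.

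\textbf{Main obstacle.} The hard part is the exact boundary computation: identifying the tangential divergence term so that the variable reference $h(u)$ absorbs it, and checking the sharp cubic inequality with constant $3\sqrt3/2$. I would first carry out the computation in the ball model, where $X$ is the radial field and everything is explicit.
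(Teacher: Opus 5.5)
\textbf{The identity.} Your interior step never settles on the identity the argument needs, and the boundary integrand you predict is wrong. The flux should use the second Newton tensor, which is divergence free because the hemisphere metric is locally conformally flat: $\operatorname{div}_g\bigl(T_2(A_g)(X)\bigr)=\langle T_2(A_g),\tfrac12\mathcal L_Xg\rangle_g=(n-2)\chi\sigma_2(A_g)$. On $\Sigma$ one has $X=u\nu$, so $\langle X,\nu\rangle_g=u$, not $u^{-1}$ times a constant. Since also $T_2(A_g)(\nu,\nu)=\sigma_2(B)$ with $B=A_g|_{T\Sigma}$, this gives $\int_\Sigma u\sigma_2(B)\,\dd\sigma_g=(n-2)\int_M\chi\sigma_2(A_g)\,\dd v_g$.

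Because $2\sigma_2(B)=\frac{n-2}{n-1}\sigma_1(B)^2-|B^\circ|^2$, this boundary integrand contains $-\tfrac12u|B^\circ|^2$. It is therefore not a cubic in $H_g$, $\tr_{\bar g}B$ and $X(\log u)$, and substituting $\sigma_1(B)$ from $H_2(g)=F(u)$ cannot remove that term. The missing idea is to add the full Newton identity \eqref{id:variable-formula} on $(\Sigma,\bar g=u^2g_{\mathbb S^{n-1}})$ with $r=(n-1)(n-2)h^2$. This is the divergence of $\bigl(T_1(A_{\bar g})-\frac{n-2}{2}h^2\bar g\bigr)\bar\nabla u$, not merely $\bar\Delta$ of a function of $u$. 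Its term $-2u\sigma_2(A_{\bar g})$ cancels $|B^\circ|^2$, because the Gauss equation $A_{\bar g}=B+\frac12H_g^2\bar g$ gives $A_{\bar g}^\circ=B^\circ$. Only after this cancellation is the remainder a function of $H_g$, $h$, $u$ and $|\bar\nabla u|$, namely \eqref{bnd:remainder}.

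That remainder carries two weights, $u$ and $u^{-1}(|\bar\nabla u|^2+1)$; the second comes from the scalar curvature of the round $\mathbb S^{n-1}$. The constant $3\sqrt3/2$ does not come from minimizing a function of $t=H_g/h$ alone. One applies the arithmetic--geometric mean inequality in $u$ (after dropping $|\bar\nabla u|^2$), and then $(H^3+2hH^2+4h^2H+2h^3)(H+2h)\ge27h^2H^2$ for $0<H\le h$. With only a $u$-weighted term, as in your guess $F(2-3t+t^3)/t$, you cannot reach the unweighted integral $\int_\Sigma F(u)(1-H_g/h)^2\,\dd\sigma_g$, since $u$ has no a priori lower bound. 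For your guessed $\phi$, the inequality also fails for large $t$. You need $H_g\le h$, which follows from $\sigma_1(B)=T_1(A_g)(\nu,\nu)\ge0$ in \eqref{bnd:H2}.

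\textbf{The rigidity.} In part (i), $H_g=h$ only gives $\sigma_1(B)=0$ on $\Sigma$. No boundary identity yields $A_g^\circ=0$ in the interior, and Obata's theorem concerns closed manifolds, so ``Einstein, then Obata'' is unavailable here. Three further steps are needed.
\begin{enumerate}
\item With $\sigma_2(A_g)=0$, the flux identity gives $0=\int_\Sigma u\sigma_2(B)=-\frac12\int_\Sigma u|B^\circ|^2$, hence $B=0$.
\item Then $A_{\bar g}=\frac12H_g^2\bar g$, and the intrinsic contracted Bianchi identity gives $(n-2)\,\dd(H_g^2)=0$. So $H_g$ is constant and $\Sigma$ is a round sphere of radius $H_g^{-1}$.
\item After a conformal automorphism of the ball and a rescaling, write $g=v^{4/(n-2)}\delta$ with $v=1$ on $\partial\mathbb B^n$. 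Then $R_g\ge0$ gives $v\ge1$ by the minimum principle. Now compare with $v_\varepsilon=\bigl((1+\varepsilon)/(1+\varepsilon|x|^2)\bigr)^{(n-2)/2}$. If $\max v/v_\varepsilon>1$, then at the interior maximum one gets $A_{g_v}>0$, contradicting $\sigma_2(A_{g_v})=0$. Letting $\varepsilon\to0$ yields $v\equiv1$.
\end{enumerate}
This filling argument is what converts boundary equality into flatness. Once flatness and $u|_\Sigma=a$ are known, the factor $a/(1+x_{n+1})$ follows from uniqueness for the Dirichlet problem satisfied by the harmonic Euclidean factor. Your part (ii) is fine.
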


\paragraph{\textbf{Proof idea.}}
Set $h(u)=(3F(u)/(n-1))^{1/3}$ and apply the variable-reference
Newton identity to $(\Sigma,\bar g)$ with reference scalar
curvature $r = (n-1)(n-2)h(u)^2$. The Gauss equation and the boundary
curvature equation give $0<H_g\le h(u)$ and a nonnegative
remainder containing $(H_g-h(u))^2$.
The flux of $T_2(A_g)$ against $X$ converts its boundary integral
into the weighted interior integral in
\eqref{bnd:general-estimate}; differentiating the reference gives
the displayed $F'$ term. If $\sigma_2(A_g)=0$ and $F'\le0$,
the resulting equality determines the boundary geometry.
A strict spherical comparison then proves flatness of the filling.
This last step is needed to pass from the boundary equality to
the Euclidean-ball conclusion.

The rigidity in Theorem~\ref{main:boundary}\textup{(i)} also
classifies smooth relative local minimizers of the total curvature
functional on the ball, including its boundary term, in dimensions
at least five. In dimension four, the corresponding application
uses its conformal primitive.
Corollary~\ref{main:minima} extends Case--Wang's
classification~\cite{CW20} to all $n\ge4$ under local minimality
within $\{R_g\ge0,H_g>0\}$, and includes subcritical boundary
exponents for $n\ge5$.
The additional step is to derive the Euler--Lagrange equations
using variations that preserve these curvature conditions;
see Subsection~\ref{var:section}.

\subsection{Monotone prescribed curvature}

Viaclovsky~\cite{V00} introduced the variational framework for
$\sigma_k$-curvature and proved the constant-curvature
classification on the sphere under the condition $A_g\in\Gamma_k^+$.
Lifting to the sphere gives the corresponding rigidity on spherical
space forms.
In dimension four, Gursky--Streets~\cite[Theorem~1.5]{GS18}
proved uniqueness for the admissible $\sigma_2$-Yamabe problem
through a formal Riemannian structure on the conformal class.
In particular, their theorem gives the constant-data classification
in positive Einstein conformal classes. The possibility of an
Obata argument on general Einstein backgrounds was discussed by
Chang--Gursky--Yang~\cite[Introduction]{CGYEntire03}.

The next result treats nonincreasing prescribed data in every
dimension $n\ge3$. Positivity of the data and admissibility of
the solution follow from the equation. We consider metrics conformal to an Einstein background.
For constant data, the new cases beyond the results above are
backgrounds of dimension $n\ge5$ with nonzero Weyl curvature. For comparison, Deng--Lu--Wei~\cite{DLW26} construct
noncompact families of constant-$\sigma_2$ metrics on suitable
backgrounds on $\mathbb S^n$ for $n\ge27$. These backgrounds are not
locally conformally flat and need not be Einstein.

\begin{maintheorem}[Monotone prescribed \(\sigma_2\) curvature]
\label{main:monotone}\label{clo:monotone}\label{clo:constant}
\label{clo:powers}\label{weyl:sigma2}
Let $(M^n,g_0)$ be closed and connected, $n\ge3$, with $g_0$
Einstein and $R_{g_0}>0$. Let $g=u^2g_0$, where $u$ is smooth and
positive. Let \(F\) be a \(C^1\) function on an interval containing
the range of \(u\), and suppose \(F'\le0\) there.
If
\[
\sigma_2(A_g)=F(u),
\]
then \(F(u)>0\), \(R_g>0\), and \(g\) is Einstein.
If \(F\) is strictly decreasing on the range of \(u\),
then \(u\) is constant, including when \(g_0\) is round.
\end{maintheorem}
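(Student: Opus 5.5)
The plan is to integrate the variable-reference Newton identity of Section~\ref{sec:identities} (the $\sigma_2$ version of \eqref{intro:companion-prop22}, i.e.\ the shifted Newton identity \eqref{id:variable-formula} with the extra term $-(n-1)\langle\nabla r,\nabla u\rangle_g/(2n)$) over the closed manifold $M$. The reference is $r=r(u)\coloneqq\sqrt{8n(n-1)F(u)}$. With this choice the zeroth-order term $u\bigl[\tfrac{r^2}{8n(n-1)}-\sigma_2(A_g)\bigr]$ cancels exactly by the equation $\sigma_2(A_g)=F(u)$. For this I first need $F(u)>0$, so that $r$ is a well-defined $C^1$ function of $u$.

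\textbf{Step 1 (positivity).} At a maximum point $p$ of $u$ we have $\nabla u=0$ and $\nabla^2_{g_0}u\le0$. The conformal transformation law for the Schouten tensor of $g=u^2g_0$ then gives, at $p$,
\[
A_g\ \ge\ u^{-2}A_{g_0}=u^{-2}\frac{R_{g_0}}{2n(n-1)}\,g_0
\]
as endomorphisms. This is positive definite, so $\sigma_2(A_g)(p)>0$, i.e.\ $F(\max u)>0$. Since $F'\le0$, we get $F(u)\ge F(\max u)>0$ everywhere. Hence $\sigma_2(A_g)>0$ on $M$, and therefore $R_g\neq0$ everywhere, since $2\sigma_2\le R_g^2/(4n(n-1))$. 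As $R_g(p)>0$ and $M$ is connected, $R_g>0$ on $M$. Moreover
\[
\frac{R_g^2}{4n(n-1)}=2\sigma_2(A_g)+\frac{|E_g|^2}{(n-2)^2}\ \ge\ 2F(u)=\frac{r^2}{4n(n-1)},
\]
so $R_g\ge r>0$.

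\textbf{Step 2 (integration).} Integrating the divergence identity over the closed $M$ makes the left-hand side vanish. What remains is a sum of the following terms:
\begin{itemize}
\item the term $c_1\,u|E_g|^2$ with $c_1>0$;
\item the term $c_2(R_g-r)\bigl(n(n-1)u^{-1}|\nabla u|^2+ur+u^{-1}R_{g_0}\bigr)$, with $c_2>0$ the coefficient attached to the $\sigma_2$ part (the analogue of $((n-1)a+n^2-4)/(4n(n-1))$);
\item the gradient term $-\tfrac{n-1}{2n}r'(u)|\nabla u|^2$, with $r'(u)=\tfrac{4n(n-1)F'(u)}{r}\le0$.
\end{itemize}
By Step 1 every one of these is nonnegative. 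Hence each vanishes identically. In particular $E_g\equiv0$, so $g$ is Einstein, and $R_g\equiv r$.

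\textbf{Step 3 (strict case).} If $F'<0$ on the range of $u$, the vanishing of $\int_M r'(u)|\nabla u|^2$ forces $\nabla u\equiv0$, so $u$ is constant. This argument never uses Obata's classification, so it applies equally when $g_0$ is round; the conformal diffeomorphisms are excluded by the monotonicity term itself.

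\textbf{Main obstacle.} I expect the main difficulty to be the precise form of the identity for pure $\sigma_2$ rather than for $I_a$: all coefficients must have the right sign in every dimension $n\ge3$, including $n=3$, and the derivative of the variable reference $r(u)$ must enter only through the single term $-\tfrac{n-1}{2n}\langle\nabla r,\nabla u\rangle$. If the $I_a$ identity had to be used instead, I would try to isolate the $\sigma_2$ contribution from \eqref{intro:companion-prop22} by combining it with the pointwise Obata identity. The key checks are that the coefficient $c_2$ in front of $(R_g-r)$ stays positive and that $\Delta R_g$-type terms enter only as divergences.
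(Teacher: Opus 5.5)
Your Steps~1 and~2 follow the paper's proof closely. You use the same maximum-point positivity argument, the same reference $r(u)=\sqrt{8n(n-1)F(u)}$ in \eqref{id:variable-formula}, and the same sign analysis giving $R_g\equiv r(u)$. Two points need correcting, and the second is a genuine, though easily closed, gap.

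First, \eqref{id:variable-formula} contains no term $c_1u|E_g|^2$. That term belongs to the $a$-independent part of the mixed identity \eqref{id:companion-mixed-formula}. The part of that identity proportional to $a$ is exactly \eqref{id:variable-formula}, with $c_2=1/(4n)$ and the single extra term $-\frac{n-1}{2n}\langle\nabla r,\nabla u\rangle_g$. In the pure $\sigma_2$ identity the trace-free Ricci tensor is hidden inside $\sigma_2(A_g)$, and your choice of $r$ cancels that term. So the integral gives only $R_g\equiv r(u)$, because the bracket multiplying $R_g-r$ is strictly positive. You then get $E_g\equiv0$ from the relation you already recorded in Step~1, $R_g^2-r(u)^2=\frac{4n(n-1)}{(n-2)^2}|E_g|^2$. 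The conclusion survives, but not for the reason you give. Your ``main obstacle'' is also moot: the identity is stated in Section~\ref{sec:identities} for all $n\ge3$ and every $C^1$ reference.

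Second, Step~3 replaces the hypothesis ``$F$ strictly decreasing'' with ``$F'<0$ on the range of $u$'', which is strictly stronger. A strictly decreasing $C^1$ function can have $F'=0$ on a nonempty closed set with empty interior, for example $F(s)=\Lambda-(s-s_0)^3$. Wherever $F'(u)=0$, the integrand $-r'(u)|\nabla u|^2$ vanishes whatever $\nabla u$ is, so the integral alone does not give $\nabla u\equiv0$.

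You could repair this topologically. The function $u$ is locally constant on the interior of $u^{-1}(\{F'=0\})$, because that zero set contains no interval. On the closure of the complement, $\nabla u=0$ by continuity.

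The paper's route is shorter. Since $E_g\equiv0$, the contracted Bianchi identity \eqref{id:bianchi-tracefree} makes $R_g$ constant on the connected manifold. Because $R_g=r(u)$, the function $F(u)=R_g^2/(8n(n-1))$ is constant, and injectivity of $F$ forces $u$ to be constant. This also avoids Obata's classification, so the round case is covered. Your gradient-term argument remains a valid shortcut when $F'<0$ throughout the range of $u$.
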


\paragraph{\textbf{Proof idea.}}
At a maximum point of $u$, the conformal transformation law gives
$A_g>0$, hence $F(\max_Mu)>0$. Monotonicity then yields
$F(u)>0$ everywhere, and connectedness gives $R_g>0$.
Choose the variable reference
\[
 r(u)\coloneqq\sqrt{8n(n-1)F(u)}.
\]
The Schouten decomposition gives $R_g\ge r(u)>0$ and
$r'\le0$. With this reference, the curvature-source term in
\eqref{id:variable-formula} vanishes, and every remaining term
in its integral is nonnegative. It follows that $R_g=r(u)$ and
$E_g=0$. The contracted Bianchi identity and strict monotonicity,
when assumed, give the constant-factor conclusion.

The theorem includes all power data $F(u)=\Lambda u^{-p}$ with
$\Lambda,p>0$. It also applies to the Weyl perturbation
$\sigma_2(A_g)+b|W_g|_g^2=\Lambda$ when $b<0$ and
$|W_{g_0}|_{g_0}^2$ is a positive constant, since the equation
then has the strictly decreasing right-hand side
$\Lambda-b|W_{g_0}|_{g_0}^2u^{-4}$.
These consequences are discussed in Sections~\ref{clo:section}
and~\ref{weyl:section}.

\subsection{Curvature quotient rigidity and sharp comparison}

The fully nonlinear conformal rigidity theorem of
Li--Li~\cite[Corollary~1.6]{LiLi03} includes the spherical
$\sigma_2/\sigma_1$ equation in the G{\aa}rding cone. Guan--Wang's
integral inequalities~\cite{GuanWang04} link such curvature
equations with sharp Sobolev inequalities on locally conformally
flat manifolds. Ge--Wang~\cite{GW13} subsequently studied the
conformal quotient in the positive-scalar-curvature class and
proved an existence theorem for minimizers. On the sphere, their result gives
the comparison between normalized total $\sigma_2$- and
$\sigma_1$-curvatures recorded in~\cite[(2.1)]{FP24}.
More recently, Chen--Fang--Zhong~\cite{CFZ26} obtained total
quotient-curvature comparisons for small $C^2$ perturbations of
strictly stable positive Einstein metrics under pointwise
quotient bounds.

We extend Li--Li's spherical $\sigma_2/\sigma_1$
rigidity~\cite{LiLi03} and Guan--Wang's sharp integral
comparison~\cite{GuanWang04} to positive Einstein backgrounds
with nonzero Weyl curvature. Within the class of positive Einstein
backgrounds, we remove the local conformal flatness assumption.
For $n\ge5$, the integral comparison holds for every smooth
metric in the fixed conformal class with positive scalar curvature.
Write
\[
 \mathcal F_1(g)\coloneqq
 \frac{\int_M\sigma_1(A_g)\,\dd v_g}{\Vol_g(M)^{(n-2)/n}},
 \qquad
 \mathcal F_2(g)\coloneqq
 \frac{\int_M\sigma_2(A_g)\,\dd v_g}{\Vol_g(M)^{(n-4)/n}}.
\]

\begin{maintheorem}[Curvature quotient rigidity and sharp comparison]
\label{main:quotient}\label{clo:quotient}
\label{sharp:comparison}\label{main:sharp}
Let $(M^n,g_0)$ be closed and connected, $n\ge4$, with $g_0$
Einstein, $R_{g_0}>0$, and $g_0$ not locally conformally flat.
Let $g=u^2g_0$, where $u$ is smooth and positive.
\begin{enumerate}[label=\textup{(\roman*)}]
\item If $\sigma_2(A_g)=\Lambda\sigma_1(A_g)$ for a constant $\Lambda$, then
$\Lambda>0$, $R_g>0$, and $g=s^2g_0$. More precisely,
\[
 \Lambda=\frac{R_g}{4n},\qquad s^2=\frac{R_{g_0}}{4n\Lambda}.
\]
\item If $n\ge5$, every smooth conformal metric $g$ with $R_g>0$ satisfies
\begin{equation}\label{sharp:strong}
 \frac{\mathcal F_2(g)}{\mathcal F_2(g_0)}
 \ge\left(\frac{\mathcal F_1(g)}{\mathcal F_1(g_0)}\right)^{\frac{n-4}{n-2}}
 \ge1.
\end{equation}
In particular,
\begin{equation}\label{sharp:pure}
 \frac{\displaystyle\int_M\sigma_2(A_g)\,\dd v_g}
      {\Vol_g(M)^{(n-4)/n}}
 \ge\frac{R_{g_0}^2}{8n(n-1)}\Vol_{g_0}(M)^{4/n}.
\end{equation}
Equality in either comparison in \eqref{sharp:strong}, or in
\eqref{sharp:pure}, holds exactly when $g=s^2g_0$ for some $s>0$.
\end{enumerate}
\end{maintheorem}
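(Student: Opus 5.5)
Part~(i) will come from the variable-reference identity \eqref{intro:companion-prop22}, used with the reference $r=\min_M R_g$ as announced in the introduction.

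\emph{Signs of $\Lambda$ and $R_g$.} At a maximum point of $u$, the conformal transformation law gives $A_g>0$, exactly as in the proof idea of Theorem~\ref{main:monotone}. Hence $\sigma_1(A_g)>0$ and $\sigma_2(A_g)>0$ there, so $\Lambda>0$. Then $\sigma_2(A_g)=\Lambda\sigma_1(A_g)$ together with
\[
2\sigma_2=\sigma_1^2-|A|^2\le \tfrac{n-1}{n}\sigma_1^2
\]
shows that $\sigma_1(A_g)$ can never vanish. Indeed, if $\sigma_1=0$ at a point, then $\sigma_2=0$ and $|A|^2=0$ there, so $A_g=0$ at that point. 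To rule this out I would argue on the set where $\sigma_1\ge 0$: the quadratic inequality $\Lambda\sigma_1\le\frac{n-1}{2n}\sigma_1^2$ forces $\sigma_1\ge \frac{2n\Lambda}{n-1}$ wherever $\sigma_1>0$. So $\{\sigma_1>0\}$ is open and also closed in $M$, and connectedness gives $R_g>0$ everywhere.

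\emph{Choice of constants.} Rewrite the equation as
\[
\frac{R_g^2}{4n(n-1)}-\frac{|E_g|^2}{(n-2)^2}=\frac{\Lambda R_g}{n-1},
\]
which gives $R_g\ge 4n\Lambda$, with equality exactly where $E_g=0$. Take $r=4n\Lambda$, which is constant, so no extra derivative terms appear. Integrating \eqref{intro:companion-prop22} kills the divergence. Then choose $a$ so that the term $2(n-1)I_a(g)$ combines with $\frac{n}{n-2}u|E_g|^2$ and the $(R_g-r)$-terms to give a nonnegative integrand vanishing only when $E_g=0$ and $R_g=r$.

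Since $I_a=Q_g+a\sigma_2$ involves $Q_g$, I would use the divergence $-\Delta\sigma_1$ together with the equation $\sigma_2=\Lambda\sigma_1$. This expresses $u I_a$ modulo divergence-type terms through $|E|^2$, $R_g$ and $(R_g-r)$. Alternatively, one could use directly the underlying Newton identity \eqref{id:variable-formula} with constant reference, which involves $\sigma_2$ rather than $Q$. The expected outcome is an identity of the form
\[
0=\int_M\Bigl[c_1 u|E_g|^2+c_2(R_g-r)\bigl(n(n-1)u^{-1}|\nabla u|^2+ur+u^{-1}R_{g_0}\bigr)\Bigr]
\]
with $c_1,c_2>0$. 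Using $R_g\ge r$ and $R_{g_0}>0$, this forces $E_g=0$ and $R_g=4n\Lambda$. Obata, together with the assumption that $g_0$ is not locally conformally flat (so not round), then gives $g=s^2g_0$ with $s^2=R_{g_0}/R_g$.

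\emph{Part (ii).} I would minimize $\mathcal F_2$ relative to $\mathcal F_1$. First, I would prove the pointwise-to-integral comparison by a Guan--Wang-type argument: run the same integrated identity with $r=\min R_g$ for general $g$ with $R_g>0$. For $n\ge5$, the weight structure should give an inequality of the form
\[
\int \sigma_2\,\dd v_g\ \ge\ \frac{1}{\,\cdots\,}\Bigl(\int\sigma_1\Bigr)^{2}\Vol^{-1}\cdot(\text{correction}).
\]
Since this is unclear, the more robust route is variational. The quotient $\mathcal F_2/\mathcal F_1^{(n-4)/(n-2)}$ is conformally invariant in scaling. In the class $\{R_g>0\}$, a minimizer exists by a Ge--Wang-type existence theorem. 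Its Euler--Lagrange equation is $\sigma_2(A_g)=\Lambda\sigma_1(A_g)+\mu$; after normalizing this becomes the quotient equation of part~(i), possibly with an additive constant, which I would need to handle analogously. Part~(i) then identifies the minimizer as $s^2g_0$, giving the first inequality in \eqref{sharp:strong}. The second inequality is the Yamabe-type statement $\mathcal F_1(g)\ge\mathcal F_1(g_0)$, which is Obata's theorem that Einstein metrics minimize the Yamabe quotient in their conformal class. Inequality \eqref{sharp:pure} follows by evaluating at $g_0$, where $\sigma_2(A_{g_0})=\frac{R_{g_0}^2}{8n(n-1)}$. Equality cases follow from the rigidity.

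\emph{Main obstacle.} The main obstacle is the existence and regularity of minimizers for $n\ge5$ in the non-locally-conformally-flat setting, where compactness for $\sigma_2$ is delicate. I would try to bypass it with a direct integral argument via the identity, using $r=\min R_g$ and the sign of $R_{g_0}$.
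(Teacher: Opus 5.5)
Your outline is the paper's proof. It uses the maximum-point argument for $\Lambda>0$ and $R_g>0$, a constant reference in the Newton identity \eqref{id:variable-formula}, and Obata. For (ii) it uses the Ge--Wang minimizer of $\mathcal Q_{2,1}$ fed into part (i), followed by the Yamabe comparison.

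The one real variation is your reference $r=4n\Lambda$ instead of the paper's $r=\min_M R_g$. It works and is slightly more direct. Since $2(n-1)\sigma_2(A_g)=\Lambda R_g$, the source term is
\[
u\Bigl(\frac{r^2}{4n}-\Lambda R_g\Bigr)=-\frac{ur}{4n}(R_g-r).
\]
This term is nonpositive on its own, and it cancels exactly the $ur$ part of the first term. What remains is
\[
0=\int_M\frac{R_g-r}{4n}\bigl(n(n-1)u^{-1}|\nabla u|^2+u^{-1}R_{g_0}\bigr)\,\dd v_g .
\]
So your ``expected'' identity, with a term $c_1u|E_g|^2$, $c_1>0$, and $ur$ inside the bracket, is not what comes out. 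The conclusion is unchanged, though. Your pointwise bound $R_g\ge 4n\Lambda$ forces $R_g\equiv 4n\Lambda$, and your pointwise relation then gives $E_g=0$ without Lemma~\ref{clo:pairing}. The paper instead needs $r/(4n)-\Lambda\ge0$ only at a minimum point of $R_g$, and then uses the Obata pairing. The detour through \eqref{intro:companion-prop22} and $Q_g$ is unnecessary.

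In (ii), the Euler--Lagrange equation of the scale-invariant quotient has no additive constant. Both first variations are multiples of $\int_M\varphi\,\sigma_k\,\dd v_g$, so the equation is exactly \eqref{sharp:quotientEL} and part (i) applies directly. The Ge--Wang existence theorem does not assume local conformal flatness. Your ``main obstacle'' is therefore settled by citation, exactly as in the paper.
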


\paragraph{\textbf{Proof idea.}}
For the quotient equation, a maximum-point argument first gives
$\Lambda>0$; the equation and connectedness then force $R_g>0$.
Take $r=\min_MR_g$. Evaluating the Schouten decomposition at a
minimum point gives $r/(4n)-\Lambda\ge0$. After the curvature
equation is substituted into the reference identity, its integral
contains only nonnegative terms, forcing $R_g$ to be constant.
Obata's identity then gives the classification in part~\textup{(i)}.
The Ge--Wang existence theorem for minimizers~\cite{GW13} reduces the sharp
quotient inequality to this classification; the Yamabe inequality
completes the chain in part~\textup{(ii)}. Tracking equality gives
precisely the homothetic metrics. The Weyl comparison in
Section~\ref{sharp:section} is a further consequence.

\subsection{Global quantitative stability}\label{intro:stability}

Bianchi--Egnell~\cite{BE91} established the stability viewpoint
for the sharp Sobolev inequality by controlling distance to the
family of extremals through the energy deficit.
In conformal geometry, Engelstein--Neumayer--Spolaor~\cite{ENS22}
developed quantitative stability for minimizing Yamabe metrics.
Frank--Peteranderl~\cite{FP24} proved the corresponding global
$\sigma_2$ estimate on the round sphere under positive scalar
curvature, with $H^1$ and $W^{1,4}$
control.
Local rigidity and variational stability for quadratic curvature
functionals on Einstein manifolds were studied by
Gursky--Viaclovsky~\cite{GV15}.

Our final theorem gives global stability on each fixed positive
Einstein background that is not the round sphere. This includes
backgrounds with nonzero Weyl curvature and nontrivial spherical
space forms. After volume normalization, the unique minimizing
factor is $1$, so the estimate measures distance directly from
the background metric. We use the fourth-order conformal factor
$z$, related to the length factor by $u=z^{4/(n-4)}$.

\begin{maintheorem}[Global quantitative stability]
\label{stab:global}
\label{main:stability}
Let $(M^n,g_0)$ be closed and connected, $n\ge5$, with $g_0$
Einstein, $R_{g_0}>0$, and $\Vol_{g_0}(M)=1$. Assume that $(M,g_0)$
is not isometric, up to scaling, to the round sphere.
There exists $c_*=c_*(M,g_0)>0$ such that every smooth positive $z$
with
\[
 g=z^{8/(n-4)}g_0,\qquad
 \int_Mz^{4n/(n-4)}\,\dd v_{g_0}=1,\qquad R_g>0,
\]
obeys
\begin{equation}
 \mathcal F_2(g)-\mathcal F_2(g_0)
 \ge c_*\left(\|z-1\|_{H^1(g_0)}^2+\|z-1\|_{W^{1,4}(g_0)}^4\right).
 \label{stab:estimate}
\end{equation}
We do not assume $\sigma_2(A_g)>0$ or a uniform positive lower bound
for $z$. The constant depends only on the fixed background metric.
\end{maintheorem}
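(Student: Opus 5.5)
Setting $w\coloneqq z-1$, I will prove \eqref{stab:estimate} in three steps: a local quadratic expansion of $\mathcal F_2$ at $z=1$, a compactness argument that reduces to that local regime, and a separate argument for large deficits that yields the $W^{1,4}$ term.

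\emph{Step 1 (local estimate).} In the variable $z$ the functional becomes
\[
\int_M\sigma_2(A_g)\,\dd v_g=\int_M\Bigl(c_n|\nabla z|^4z^{-4}\cdot(\text{weights})+\dots\Bigr),
\]
a fourth-order-in-gradient Lagrangian whose leading part is a divergence-form combination of $|\nabla z|^4$, $|\nabla z|^2\Delta z$ terms and $R_{g_0}$-weighted lower-order terms. On an Einstein background the Schouten tensor is $A_{g_0}=\frac{R_{g_0}}{2n(n-1)}g_0$. I will expand $\mathcal F_2(z^{8/(n-4)}g_0)$ to second order at $z=1$ under the volume constraint. The Hessian is a multiple of
\[
\int_M\Bigl(|\nabla w|^2-\tfrac{R_{g_0}}{n-1}w^2\Bigr)\,\dd v_{g_0}
\]
restricted to $\int_M w=0$ (up to normalizing constants, mirroring the Yamabe second variation). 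By Lichnerowicz--Obata, $\lambda_1(-\Delta_{g_0})\ge R_{g_0}/(n-1)$, with equality only on the round sphere. Since $(M,g_0)$ is not round, the inequality is strict, which gives a spectral gap and hence
\[
\mathcal F_2(g)-\mathcal F_2(g_0)\ge c\|w\|_{H^1}^2
\]
when $w$ is small in $W^{1,4}$. The quartic gradient term in the Lagrangian additionally supplies $c\|\nabla w\|_4^4$, so the full right-hand side of \eqref{stab:estimate} holds locally.

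\emph{Step 2 (compactness).} Suppose the estimate fails. Then there is a sequence $z_k$ with ratio $\delta_k\coloneqq(\mathcal F_2(g_k)-\mathcal F_2(g_0))/(\|w_k\|_{H^1}^2+\|w_k\|_{W^{1,4}}^4)\to0$. If the denominators are bounded away from $0$, the deficits tend to $0$, so $z_k$ is a minimizing sequence for \eqref{sharp:pure} within the class $R_g>0$. I would show this sequence is compact in $W^{1,4}$ modulo the trivial invariance. Because $g_0$ is not round, there is no conformal group acting noncompactly, so compactness should come from a concentration-compactness argument in the spirit of Frank--Peteranderl and Ge--Wang. Any concentration would produce a spherical bubble whose energy equals the round-sphere value $\mathcal F_2(\mathbb S^n)$. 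By Theorem~\ref{main:quotient}(ii) together with Aubin's strict inequality (non-round Yamabe constant below that of the sphere, which I expect to transfer to $\mathcal F_2$ through \eqref{sharp:strong}), $\mathcal F_2(g_0)<\mathcal F_2(\mathbb S^n)$, which rules this out. Hence $z_k\to z_\infty$ strongly, and $z_\infty$ attains equality in \eqref{sharp:pure}. The equality case in Theorem~\ref{main:quotient} together with the volume normalization forces $z_\infty=1$. This contradicts the denominators being bounded below, so we are in the local regime of Step 1, which contradicts $\delta_k\to0$.

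\emph{Step 3 (large deficits and the main obstacle).} The main obstacle is the absence of a positive lower bound on $z$ and of $\sigma_2>0$. Strong convergence in Step 2 needs uniform control. I would first use the chain \eqref{sharp:strong} to bound $\mathcal F_1(g_k)$, hence $\|z_k^{(n-2)/(n-4)}\|_{H^1}$, by the deficit. I would then use a pointwise Newton-tensor integration by parts, writing $\sigma_2$ as a combination of $|\nabla z|^4$ and $\sigma_1$-type terms under $R_g>0$, to get $\|\nabla z\|_4^4\lesssim\int\sigma_2\,\dd v_g+C$. This should yield the estimate directly for large deficits: the left side grows at least linearly in $\|\nabla z\|_4^4$, so the $W^{1,4}$ power is recovered. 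Making this coercivity quantitative with constants depending only on $(M,g_0)$ is where I expect the real work to lie.
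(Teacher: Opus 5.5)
Your Step~1 does not go through as stated. By Case's identity (Lemma~\ref{stab:energy}), the $\sigma_2$ energy in the variable $z$ contains $-\frac{16}{(n-4)^2}\int_M z\,\Delta z\,|\nabla z|^2\,\dd v_{g_0}$. Equivalently, it contains the term $\frac{4}{n-4}\int_M z^{2n/(n-4)}\sigma_1(A_g)|\nabla z|^2\,\dd v_{g_0}$, which by \eqref{stab:K} involves $\Delta_{g_0}(z^2)$. Hence the Taylor remainder of $\mathcal F_2$ at $z=1$ is not $o(\|w\|_{H^1}^2)$ on $W^{1,4}$-neighborhoods, even inside the class $R_g>0$.

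To see this, work in a chart and take $w=\varepsilon^2\phi(x/\varepsilon)$ with suitable $\phi\in C_c^\infty$ and $\|\Delta\phi\|_\infty$ small. This keeps $R_g>0$ and gives $\|w\|_{W^{1,4}}\to0$. Yet the cubic term $\int_M\Delta w\,|\nabla w|^2$ and $\|\nabla w\|_2^2$ are both of exact order $\varepsilon^{n+2}$. So positivity of the Hessian does not by itself yield your local estimate. The Hessian is correctly identified, and its kernel is excluded by Obata's gap.

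Nor can you use the sign of the $\sigma_1$-term and discard it. That term carries $\frac{2n\kappa}{n-4}\int_M|\nabla w|^2$ of the Hessian. What remains is $\frac{2(n-2)\kappa}{n-4}\int_M\bigl(|\nabla w|^2-\frac{2n(n-1)}{n-2}\kappa w^2\bigr)$, which is not positive on mean-zero functions under Obata's gap alone. For instance, on the Einstein product $\mathbb S^2\times\mathbb S^3$ one has $\lambda_1(-\Delta_{g_0})=6\kappa<\frac{40}{3}\kappa$.

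Step~2 has the same defect at the global level. A concentration-compactness and bubbling theory for the $\sigma_2$ energy in the class $R_g>0$, with no lower bound on $z$, is a substantial piece of analysis that you only assert. Moreover, $W^{1,4}$ convergence does not give convergence of $\mathcal F_2$, for the reason just described. A $W^{1,4}$ limit also need not be smooth with positive scalar curvature, so the equality case of Theorem~\ref{main:quotient} cannot be applied to it. That theorem also excludes locally conformally flat backgrounds, such as nontrivial spherical space forms.

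The missing idea is to avoid second variations and compactness at the $\sigma_2$ level altogether. Set $y\coloneqq z^{2(n-2)/(n-4)}$. Then $g=y^{4/(n-2)}g_0$, $\int_M y^{2n/(n-2)}\,\dd v_{g_0}=1$, and $\mathcal Y(y)=2(n-1)\mathcal F_1(g)$. The first inequality in \eqref{sharp:strong}, or \eqref{sharp:spherical-known} on spherical space forms, turns a small $\sigma_2$ deficit into a small Yamabe deficit: $0\le\mathcal Y(y)-R_{g_0}\le C\mathcal D(g)$.

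Yamabe stability on the fixed nonround class then gives $\|y-1\|_{H^1}^2\le C\mathcal D(g)$. This is Lemma~\ref{stab:yamabe}, built from the strict gap $\lambda_1(-\Delta_{g_0})>n\kappa$, the Engelstein--Neumayer--Spolaor local estimate, and compactness of Yamabe minimizing sequences. It is a purely $H^1$ statement, where the functional is smooth and compactness is classical.

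Only after that does Case's identity enter. With all terms nonnegative under $R_g>0$, it gives $\frac{32}{(n-4)^3}\int_M|\nabla z|^4\le\mathcal D(g)+\mathcal F_2(g_0)\bigl(1-\int_Mz^4\bigr)$. The mass loss $1-\int_Mz^4$ is bounded by $C(\|y-1\|_2^2+\|y-1\|_{2n/(n-2)}^{2n/(n-2)})\le C\mathcal D(g)$, via an elementary one-variable inequality. Passing from $y$ to $z$ then gives both norms. This uses $|z-1|\le|y-1|$, H\"older with the $L^4$ gradient bound on $\{z<1/2\}$, and a Poincar\'e--Sobolev inequality.

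Your Step~3 is essentially the paper's large-deficit step. The Newton-tensor formula you are looking for is exactly Case's identity.
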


\paragraph{\textbf{Proof idea.}}
Theorem~\ref{main:quotient}\textup{(ii)}, together with the known
spherical space-form comparison, bounds the Yamabe deficit by
the $\sigma_2$ deficit. Obata's strict spectral gap and the
Yamabe stability and compactness results of~\cite{ENS22} give
quadratic control near the normalized Einstein metric.
Case's conformal energy formula~\cite[Lemma~2.1 and
Remark~2.2]{Case21}, specialized to an Einstein background,
provides positive gradient terms, including
$\int_M|\nabla_{g_0}z|^4\,\dd v_{g_0}$. These terms give the
fourth-power control and allow the local estimate to be combined
with an energy bound away from the minimum. The resulting
estimate holds on the entire normalized positive-scalar-curvature
class, with a constant depending on the fixed background.

\paragraph{\textbf{Organization of the paper.}}
Section~\ref{sec:identities} recalls the conformal and Newton
identities and states the variable-reference formula used here.
Section~\ref{bnd:section} proves the general estimate and resulting
rigidity in Theorem~\ref{main:boundary}; its final subsection
applies this rigidity to relative local minimizers on the ball.
Section~\ref{clo:section} proves Theorem~\ref{main:monotone} and
the quotient rigidity in Theorem~\ref{main:quotient}\textup{(i)}.
Section~\ref{sharp:section} proves the sharp comparisons in
Theorem~\ref{main:quotient}\textup{(ii)} and their Weyl-curvature
consequence. Section~\ref{stab:section} proves
Theorem~\ref{main:stability}. Finally, Section~\ref{sec:further}
applies the rigidity results to Weyl-curvature perturbations and
recalls the unperturbed four-dimensional consequence
from~\cite{CWSharp}.

\section{Reference curvature identities}\label{sec:identities}

Let $(M^n,g_0)$ be an Einstein manifold, $n\geq3$, with
$\Ric_{g_0}= \frac{R_{g_0}}{n}g_0$, and let $g=u^2g_0$, where $u>0$ is smooth.
No compactness or sign assumption is needed in the identities of this section.
Unless a subscript indicates otherwise, covariant derivatives, contractions,
and norms refer to $g$. We use $\Delta\coloneqq \operatorname{div}\nabla$ and set
\begin{equation}\label{id:notation}
 E_g\coloneqq \Ric_g-\frac{R_g}{n} g,\qquad
 A_g\coloneqq \frac{1}{n-2}\left(\Ric_g-\frac{R_g}{2(n-1)}g\right),\qquad
 \sigma_1(A_g)\coloneqq \tr_g A_g=\frac{R_g}{2(n-1)}.
\end{equation}

\paragraph{\textbf{Bianchi and integration formulas.}}
With the divergence convention fixed above, the contracted second
Bianchi identity and its equivalent forms are
\begin{align}
 \nabla^i(\Ric_g)_{ij}&=\frac12\nabla_jR_g,\label{id:bianchi-ric}\\
 \nabla^i(A_g)_{ij}&=\nabla_j\sigma_1(A_g)
       =\frac1{2(n-1)}\nabla_jR_g,\label{id:bianchi-schouten}\\
 \nabla^i(E_g)_{ij}&=\frac{n-2}{2n}\nabla_jR_g.\label{id:bianchi-tracefree}
\end{align}
See Besse~\cite[Chapter~1]{Besse87} and Li--Wei~\cite[Section~2]{LiWei25}.
These formulas hold for every smooth metric in dimension $n\ge3$.
In particular, $E_g=0$ implies $\dd R_g=0$ on each connected component.

For a symmetric two-tensor $T$ and a smooth function $f$, the product
rule, followed by the divergence theorem, gives the formulas used
in the integrations below; see also the proof of
\cite[Theorem~2.1]{LiWei25}:
\begin{equation}\label{id:tensor-product}
 \operatorname{div}(T\nabla f)
   =(\nabla^iT_{ij})\nabla^jf+\langle T,\nabla^2 f\rangle_g.
\end{equation}
On a compact smooth domain $\Omega$ with outward unit normal $\nu$,
\begin{align}
 \int_\Omega\operatorname{div}X\,\dd v_g
   &=\int_{\partial\Omega}\langle X,\nu\rangle_g\,\dd\sigma_g,
       \label{id:divergence-theorem}\\
 \int_\Omega f\Delta h\,\dd v_g
   &=-\int_\Omega\langle\nabla f,\nabla h\rangle_g\,\dd v_g
       +\int_{\partial\Omega}f\nu(h)\,\dd\sigma_g.
       \label{id:green}
\end{align}
Here $X$ is a smooth vector field and $f,h$ are smooth functions.
For a closed manifold the boundary terms are absent. The measures
and normal in these formulas are those of $g$.

\paragraph{\textbf{General conformal formulas.}}
For a smooth function $f$ and $\widehat g=e^{2f}g$, the basic
transformation laws, with all derivatives on the right taken in $g$,
are given in~\cite[Chapter~1]{Besse87}:
\begin{align}
 A_{\widehat g}&=A_g-\nabla_g^2 f+\dd f\otimes\dd f
                   -\frac12|\nabla_g f|_g^2g,
                   \label{id:conformal-schouten}\\
 \sigma_1(A_{\widehat g})&=e^{-2f}\left(
       \sigma_1(A_g)-\Delta_g f-\frac{n-2}{2}|\nabla_g f|_g^2\right),
                   \label{id:conformal-trace}\\
 \Delta_{\widehat g}h&=e^{-2f}\left(
       \Delta_g h+(n-2)\langle\nabla_g f,\nabla_g h\rangle_g\right),
                   \label{id:conformal-laplacian}\\
 \dd v_{\widehat g}&=e^{nf}\dd v_g.
                   \label{id:conformal-volume}
\end{align}
The Schouten decomposition is recorded in
Li--Wei~\cite[Section~2]{LiWei25}:
\begin{equation}\label{id:sigma}
 \sigma_2(A_g)=\frac{R_g^2}{8n(n-1)}-\frac{|E_g|^2}{2(n-2)^2}.
\end{equation}
The first Newton tensor is
\begin{equation}\label{id:newton}
 T_1(A_g)\coloneqq \sigma_1(A_g)g-A_g=\frac{R_g}{2n}g-\frac{E_g}{n-2},
 \qquad \operatorname{div}T_1(A_g)=0.
\end{equation}
Its divergence vanishes for every smooth metric by
\eqref{id:bianchi-schouten}. We regard a symmetric tensor as an
endomorphism when applying it to a gradient.

For an Einstein background, we use the following conformal
identities from Li--Wei~\cite[Section~2]{LiWei25}:
\begin{align}
 (\nabla^2 u)^\circ&=-\frac{1}{n-2}uE_g,\label{id:hess}\\
 \Delta u&=\frac{n}{2}u^{-1}|\nabla u|^2-\frac{1}{2(n-1)}uR_g
                  +\frac{1}{2(n-1)}u^{-1}R_{g_0}.\label{id:trace}
\end{align}
These are the trace-free and trace parts of the conformal transformation
formula for the Ricci tensor. All derivatives in these formulas are taken
with respect to the target metric $g$.

\paragraph{\textbf{The reference identity used below.}}
We quote the Newton identity from~\cite[Proposition~2.1]{CWSharp}
and the mixed-curvature identity from~\cite[Proposition~2.2]{CWSharp}.
We allow the reference $r$ to vary; the additional gradient terms
come from differentiating $r$, as noted in the latter proposition.
For every real-valued $r\in C^1(M)$, the Newton identity reads
\begin{equation}\label{id:variable-formula}
\begin{split}
 \operatorname{div}\!\left[(n-1)\left(T_1(A_g)-\frac{r}{2n}g\right)\nabla u\right]={}&
 \frac{R_g-r}{4n}\left(n(n-1)u^{-1}|\nabla u|^2+ur+u^{-1}R_{g_0}\right)\\
 &+u\left(\frac{r^2}{4n}-2(n-1)\sigma_2(A_g)\right)
       -\frac{n-1}{2n}\langle\nabla r,\nabla u\rangle_g,
\end{split}
\end{equation}
For the same $r$ and any real constant $a$, the mixed-curvature
identity reads
\begin{equation}\label{id:companion-mixed-formula}
\begin{split}
 &\operatorname{div}\!\left[u\nabla R_g
 -\frac{(n-1)a+n^2+2n-4}{n-2}E_g\nabla u
 +\frac{(n-1)a+n^2-4}{2n}(R_g-r)\nabla u\right]\\
 &\quad=\frac n{n-2}u|E_g|^2
 +\frac{(n-1)a+n^2-4}{4n(n-1)}(R_g-r)
       \left(n(n-1)u^{-1}|\nabla u|^2+ur+u^{-1}R_{g_0}\right)\\
 &\qquad\quad+u\left[\frac{(n-1)a+n^2-4}{4n(n-1)}r^2-2(n-1)I_a(g)\right]\\
 &\qquad\quad-\frac{(n-1)a+n^2-4}{2n}\langle\nabla r,\nabla u\rangle_g,
\end{split}
\end{equation}
where $I_a(g)\coloneqq Q_g+a\sigma_2(A_g)$.
The companion paper states these identities for $n\ge4$.
Its derivation uses the Bianchi, Schouten, and conformal formulas
above and the definition of $Q_g$, all valid for $n\ge3$; hence the
same identities also hold in dimension three. We need this case
when applying \eqref{id:variable-formula} to the boundary of a
four-dimensional hemisphere.
Both identities are pointwise and require no curvature equation,
compactness, or sign condition. The gradient term vanishes for
constant $r$. In each application below, we specify $r$ before
integrating the Newton identity.

\section{Mean-curvature estimates and boundary rigidity}
\label{bnd:section}

The proof has three steps: a boundary identity gives the mean-curvature
estimate, its equality case determines the boundary geometry, and a
strict comparison proves flatness of the filling. We use the boundary
formulas of Case--Wang~\cite{CW18}, with their boundary dimension replaced
by $n-1$.
Throughout this section, $n\geq4$ and
\[
 M\coloneqq \mathbb S^n_+,
 \qquad \Sigma\coloneqq \partial M=\mathbb S^{n-1},
 \qquad g=u^2g_{\mathrm{rd}},
 \qquad u\in C^\infty(M),\quad u>0,
\]
where $g_{\mathrm{rd}}$ has sectional curvature one and all data are
smooth up to the boundary. We use the outward unit normal $\nu$ and the
averaged mean curvature $H_g$, with second fundamental form
$g(\nabla_Y\nu,Z)$, as in~\cite[Section~2.2]{CW18}. Thus the Euclidean
unit ball has $H_g=1$. Write $\bar g=g|_{T\Sigma}$ and
$\nu f=\partial_\nu f=\dd f(\nu)$. Since the equator is totally geodesic
and umbilicity is conformally invariant, its second fundamental form
in $g$ is $H_g\bar g$.

\paragraph{\textbf{Boundary formulas.}}
We use the standard Newton identities~\cite[Section~2.1]{CW18}
\begin{equation}\label{bnd:newton-traces}
\begin{aligned}
 \tr_gT_1(A_g)&=(n-1)\sigma_1(A_g),
 &\tr_g(A_gT_1(A_g))&=2\sigma_2(A_g),\\
 \tr_gT_2(A_g)&=(n-2)\sigma_2(A_g).
\end{aligned}
\end{equation}
Products are understood as endomorphism products after raising an index.
We write $A_g\in\overline{\Gamma_2^+}$ when its eigenvalues lie in the closure
of $\{\lambda:\sigma_1(\lambda)>0,\ \sigma_2(\lambda)>0\}$.
On this cone, $T_1(A_g)\geq0$ by~\cite[Section~2.1]{CW18} and continuity.

Set
\begin{equation}
 \label{bnd:intrinsic-notation}
 B\coloneqq A_g|_{T\Sigma},\qquad
 \sigma_1(B):= \tr_{\bar g}B=T_1(A_g)(\nu,\nu),\qquad
 B^\circ\coloneqq B-\frac{\sigma_1(B)}{n-1}\bar g.
\end{equation}
Thus $B^\circ$ is the trace-free part of the tangential Schouten tensor.
The equality $\sigma_1(B)=T_1(A_g)(\nu,\nu)$ is the $k=1$ case of
\cite[proof of Lemma~5.1]{CW18}. Bars on differential operators and norms
refer to $\bar g$.

The Gauss--Codazzi equations~\cite[Chapter~1]{Besse87} simplify under
local conformal flatness and umbilicity to
\begin{align}
 \operatorname{Sec}_{\bar g}(Y\wedge Z)
   &=A_g(Y,Y)+A_g(Z,Z)+H_g^2,\label{bnd:gauss-sectional}\\
 \Ric_g(\nu,Y)&=-(n-2)Y(H_g),&
 A_g(\nu,Y)&=-Y(H_g).\label{bnd:codazzi-normal}
\end{align}
Here $Y,Z$ are orthonormal tangent vectors and $\operatorname{Sec}$ is
sectional curvature. Taking the trace over tangent directions in
\eqref{bnd:gauss-sectional} gives the intrinsic Ricci tensor; taking its
trace gives the intrinsic scalar curvature:
\[
 \Ric_{\bar g}=(n-3)B+\bigl(\sigma_1(B)+(n-2)H_g^2\bigr)\bar g,
 \qquad R_{\bar g}=2(n-2)\sigma_1(B)+(n-1)(n-2)H_g^2.
\]
Equivalently, Case--Wang's Schouten form of the Gauss equation
\cite[equation~(2.4)]{CW18} reads
\begin{equation}
 \label{bnd:gauss}
 A_{\bar g}=B+\frac{H_g^2}{2}\bar g,
 \qquad
 \sigma_1(A_{\bar g})=\sigma_1(B)+\frac{n-1}{2}H_g^2,
 \qquad
 A_{\bar g}-\frac{\sigma_1(A_{\bar g})}{n-1}\bar g=B^\circ.
\end{equation}
The intrinsic Schouten tensor $A_{\bar g}$ is defined by
\eqref{id:notation} in dimension $n-1\ge3$, and
$\sigma_1(A_{\bar g})=\tr_{\bar g}A_{\bar g}$.
The intrinsic contracted Bianchi identity~\eqref{id:bianchi-schouten}
and~\eqref{id:newton} give
\begin{equation}\label{bnd:bianchi}
 \bar\nabla^a(A_{\bar g})_{ab}=\bar\nabla_b\sigma_1(A_{\bar g}),
 \qquad \overline{\operatorname{div}}T_1(A_{\bar g})=0.
\end{equation}
Finally, the umbilic formula~\cite[Lemma~5.2]{CW18} gives
\begin{equation}
 \label{bnd:H2}
 H_2(g)\coloneqq H_g \sigma_1(B)+\frac{n-1}{3}H_g^3.
\end{equation}

\paragraph{\textbf{The identity behind the estimate.}}
We apply~\eqref{id:variable-formula} to the boundary metric, choosing
the reference from $H_2(g)$. This choice makes the curvature remainder
nonnegative. The identity also keeps track of two terms needed for the
general estimate: the derivative of the reference and the interior
$\sigma_2$-curvature. The latter vanishes in
Case--Wang's flux identity~\cite[Lemma~5.4]{CW18}.

\begin{proposition}[The boundary identity]
\label{bnd:identity}
Suppose that $A_g\in\overline{\Gamma_2^+}$ and $H_2(g)>0$ on $\Sigma$.
Define the reference mean curvature by
\begin{equation}
 \label{bnd:anchor}
 h:=\left(\frac{3H_2(g)}{n-1}\right)^{1/3}.
\end{equation}
At each boundary point, $h$ is the mean curvature of a Euclidean
ball whose $H_2$ equals the prescribed value at that point. Then
$0<H_g\leq h$, and the following pointwise identity holds:
\begin{equation}
 \label{bnd:local-identity}
 \begin{split}
 2u\sigma_2(B)
 &+\overline{\operatorname{div}}\!\left[
 \left(T_1(A_{\bar g})-\frac{n-2}{2}h^2\bar g\right)(\bar\nabla u)\right]\\
 &=\frac{n-2}{n-1}u\bigl(\sigma_1(B)^2-\sigma_1(A_{\bar g})\left(\sigma_1(A_{\bar g})-\frac{n-1}{2}h^2\right)\bigr)\\
 &+\frac{n-2}{2}u^{-1}(|\bar\nabla u|^2+1)\left(\sigma_1(A_{\bar g})-\frac{n-1}{2}h^2\right)
   -(n-2)h\langle\bar\nabla h,\bar\nabla u\rangle.
 \end{split}
\end{equation}
The curvature terms satisfy
\begin{equation}
 \label{bnd:remainder}
 \begin{split}
 &\frac1{n-1} u\bigl(\sigma_1(B)^2-\sigma_1(A_{\bar g})\left(\sigma_1(A_{\bar g})-\frac{n-1}{2}h^2\right)\bigr)
       +\frac12u^{-1}(|\bar\nabla u|^2+1)\left(\sigma_1(A_{\bar g})-\frac{n-1}{2}h^2\right)\\
 &\qquad=\frac{n-1}{12H_g}(H_g-h)^2\bigl[
 u(H_g^3+2hH_g^2+4h^2H_g+2h^3)\\
 &\hspace{47mm}+u^{-1}(|\bar\nabla u|^2+1)(H_g+2h)\bigr]\geq0.
 \end{split}
\end{equation}
On $\mathbb S^n_+\subset\mathbb R^{n+1}$, take the conformal Killing
field from Case--Wang~\cite[proof of Lemma~5.4]{CW18} and denote
its conformal factor with respect to $g=u^2g_{\mathrm{rd}}$ by $\chi$:
\begin{equation}
 \label{bnd:CK-field}
 X\coloneqq -\nabla_{g_{\mathrm{rd}}}x_{n+1},
 \qquad
 \chi\coloneqq \frac1n\operatorname{div}_gX
      =x_{n+1}+X(\log u).
\end{equation}
Integrating gives
\begin{equation}
 \label{bnd:integrated-identity}
 \begin{split}
 &\int_\Sigma\biggl[
 \frac1{n-1} u\bigl(\sigma_1(B)^2-\sigma_1(A_{\bar g})\left(\sigma_1(A_{\bar g})-\frac{n-1}{2}h^2\right)\bigr)\\
 &\qquad\qquad+\frac12u^{-1}(|\bar\nabla u|^2+1)\left(\sigma_1(A_{\bar g})-\frac{n-1}{2}h^2\right)\biggr]
       \,\mathrm{d}\sigma_g\\
 &\qquad=2\int_M\chi\sigma_2(A_g)\,\dd v_g
  +\int_\Sigma h\langle\bar\nabla h,\bar\nabla u\rangle
       \,\mathrm{d}\sigma_g.
 \end{split}
\end{equation}
\end{proposition}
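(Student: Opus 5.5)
The plan is to obtain \eqref{bnd:local-identity} by applying the variable-reference Newton identity \eqref{id:variable-formula} intrinsically on $(\Sigma,\bar g)$, with background the round $\mathbb S^{n-1}$ and reference scalar curvature $r=(n-1)(n-2)h^2$. Then \eqref{bnd:remainder} is a one-variable algebraic identity, and \eqref{bnd:integrated-identity} follows from a Case--Wang-type flux of $T_2(A_g)$ against $X$.

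\emph{Step 1: the bound $0<H_g\le h$.} On $\overline{\Gamma_2^+}$ we have $T_1(A_g)\ge0$, so $\sigma_1(B)=T_1(A_g)(\nu,\nu)\ge0$. By \eqref{bnd:H2},
\[
 H_2(g)=H_g\Bigl(\sigma_1(B)+\tfrac{n-1}{3}H_g^2\Bigr)>0 .
\]
The bracket is nonnegative, so $H_g>0$. Then $\tfrac{n-1}{3}h^3=H_2(g)\ge\tfrac{n-1}{3}H_g^3$, which gives $H_g\le h$. We also record
\[
 \sigma_1(B)=\frac{(n-1)(h^3-H_g^3)}{3H_g},
\]
which is used repeatedly below.

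\emph{Step 2: the pointwise identity.} Since $n-1\ge3$, \eqref{id:variable-formula} applies in dimension $m=n-1$ to $\bar g=u^2g_{\mathbb S^{n-1}}$, for which $R_{g_0}=(n-1)(n-2)$. With $r=(n-1)(n-2)h^2$ we get $r/(2m)=\tfrac{n-2}{2}h^2$ and $\tfrac{m-1}{2m}\nabla r=(n-2)h\bar\nabla h$. The first group on the right becomes $\tfrac{n-2}{2}u^{-1}(|\bar\nabla u|^2+1)\bigl(\sigma_1(A_{\bar g})-\tfrac{n-1}{2}h^2\bigr)$, using $R_{\bar g}=2(n-2)\sigma_1(A_{\bar g})$. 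The gradient norm must be read in the normalization fixed by \eqref{id:trace}; I will check this carefully.

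It remains to trade $\sigma_2(A_{\bar g})$ for $\sigma_2(B)$. By \eqref{bnd:gauss}, $A_{\bar g}$ and $B$ have the same trace-free part $B^\circ$. Using $2\sigma_2(T)=\tfrac{m-1}{m}\sigma_1(T)^2-|T^\circ|^2$ in dimension $m=n-1$, we obtain
\[
 2\sigma_2(A_{\bar g})=2\sigma_2(B)+\tfrac{n-2}{n-1}\bigl(\sigma_1(A_{\bar g})^2-\sigma_1(B)^2\bigr).
\]
Combining this with $u\,r^2/(4m)=\tfrac{(n-1)(n-2)^2}{4}uh^4$ should produce exactly the first curvature term on the right of \eqref{bnd:local-identity}, after moving $2(n-2)u\sigma_2(B)$ to the left and dividing by $n-2$. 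The overall factor in the divergence needs rechecking against \eqref{id:variable-formula}.

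\emph{Step 3: the remainder.} Substitute $\sigma_1(B)$ from Step 1 and $\sigma_1(A_{\bar g})=\sigma_1(B)+\tfrac{n-1}{2}H_g^2$. This gives
\[
 \sigma_1(A_{\bar g})-\tfrac{n-1}{2}h^2=\tfrac{n-1}{6H_g}\,(2h^3-3h^2H_g+H_g^3)=\tfrac{n-1}{6H_g}\,(h-H_g)^2(2h+H_g).
\]
The quadratic term $\sigma_1(B)^2-\sigma_1(A_{\bar g})(\cdots)$ is a polynomial in $(H_g,h)$ divided by $H_g^2$. I expect it to factor as $(h-H_g)^2(H_g^3+2hH_g^2+4h^2H_g+2h^3)/H_g$ up to the constant $(n-1)^2/12$, which I will verify by expanding. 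Nonnegativity then follows from $H_g,h>0$.

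\emph{Step 4: integration.} On the closed manifold $\Sigma$ the divergence integrates to zero. For the $\sigma_2(B)$ term we use the flux of $T_2(A_g)X$. Since $g$ is locally conformally flat, $\operatorname{div}T_2(A_g)=0$, and $X$ is conformal Killing with $\mathcal L_Xg=2\chi g$. Hence
\[
 \operatorname{div}(T_2(A_g)X)=\chi\tr_gT_2(A_g)=(n-2)\chi\sigma_2(A_g).
\]
On the equator $X=-\nabla_{g_{\mathrm{rd}}}x_{n+1}$ is the outward round normal, which in $g$ equals $u\nu$. By the $k=2$ case of \cite[Lemma~5.1]{CW18}, $T_2(A_g)(\nu,\nu)=\sigma_2(B)$. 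Therefore
\[
 \int_\Sigma u\sigma_2(B)\,\dd\sigma_g=(n-2)\int_M\chi\sigma_2(A_g)\,\dd v_g .
\]
Dividing the integrated identity by $n-2$ and moving the $h\langle\bar\nabla h,\bar\nabla u\rangle$ term across gives \eqref{bnd:integrated-identity}.

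\emph{Main obstacle.} I expect the main difficulty to be bookkeeping of normalizations in Step 2. These include the meaning of $|\bar\nabla u|^2$ (round versus $\bar g$ gradient), the area measure $\dd\sigma_g$ against the measure in \eqref{id:variable-formula}, and the constant multiplying the divergence. The factorization in Step 3 is mechanical but must be checked.
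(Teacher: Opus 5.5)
Your proposal is correct and follows the paper's proof essentially step for step: apply \eqref{id:variable-formula} intrinsically on $(\Sigma,\bar g)$ with $r=(n-1)(n-2)h^2$, cancel the common $|B^\circ|^2$ between $\sigma_2(A_{\bar g})$ and $\sigma_2(B)$, factor the remainder using $\sigma_1(B)=\frac{n-1}{3H_g}(h^3-H_g^3)$, and compute the flux of $T_2(A_g)X$ with $X|_\Sigma=u\nu$. In the Step~2 bookkeeping, two fixes are needed: before dividing by $n-2$ the gradient term is $-(n-2)^2h\langle\bar\nabla h,\bar\nabla u\rangle$ (it becomes $-(n-2)h\langle\bar\nabla h,\bar\nabla u\rangle$ only after the division), and you must keep the $ur$ part of the first group, because $\frac{ur(R_{\bar g}-r)}{4(n-1)}+\frac{ur^2}{4(n-1)}=\frac{urR_{\bar g}}{4(n-1)}$ is what produces the term $\frac{n-2}{2}uh^2\sigma_1(A_{\bar g})$ in \eqref{bnd:local-identity}.
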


\begin{proof}
The condition $A_g\in\overline{\Gamma_2^+}$ gives
$\sigma_1(B)=T_1(A_g)(\nu,\nu)\geq0$.
In~\eqref{bnd:H2}, the factor multiplying $H_g$ is
$\sigma_1(B)+(n-1)H_g^2/3\geq0$. Since $H_2(g)>0$, we must have
$H_g>0$. The same formula then gives
$H_g^3\leq3H_2(g)/(n-1)=h^3$. Solving \eqref{bnd:H2} and
\eqref{bnd:gauss} in terms of the two mean curvatures gives
\begin{equation}
 \label{bnd:anchor-algebra}
 \sigma_1(B)=\frac{n-1}{3H_g}(h^3-H_g^3),
 \qquad
 \sigma_1(A_{\bar g})=\frac{n-1}{6H_g}(2h^3+H_g^3).
\end{equation}
Consequently,
\begin{align}
 \label{bnd:factorizations}
 \sigma_1(A_{\bar g})-\frac{n-1}{2}h^2
 &=\frac{n-1}{6H_g}(H_g-h)^2(H_g+2h),\notag\\
 \sigma_1(B)^2-\sigma_1(A_{\bar g})
       \left(\sigma_1(A_{\bar g})-\frac{n-1}{2}h^2\right)
 &=\frac{(n-1)^2}{12H_g}(H_g-h)^2
       (H_g^3+2hH_g^2+4h^2H_g+2h^3).
\end{align}

For the differential identity, apply~\eqref{id:variable-formula}
in dimension $n-1$ to $\bar g=u^2g_{\mathbb S^{n-1}}$, with
\[
 R_{g_{\mathbb S^{n-1}}}=(n-1)(n-2),
 \qquad r=(n-1)(n-2)h^2.
\]
This application also covers boundary dimension three, as explained in
Section~\ref{sec:identities}. In making the substitution, we use
\[
 R_{\bar g}-r=2(n-2)\left(\sigma_1(A_{\bar g})-\frac{n-1}{2}h^2\right),
 \qquad \bar\nabla r=2(n-1)(n-2)h\bar\nabla h.
\]
After this substitution and division by $n-2$, the reference tensor is
$(n-2)h^2\bar g/2$ and the last term is
$-(n-2)h\langle\bar\nabla h,\bar\nabla u\rangle$.
Combining the scalar terms gives
\begin{align*}
 &\overline{\operatorname{div}}\!\left[
 \left(T_1(A_{\bar g})-\frac{n-2}{2}h^2\bar g\right)(\bar\nabla u)\right]\\
 &\quad=-2u\sigma_2(A_{\bar g})+\frac{n-2}{2}uh^2\sigma_1(A_{\bar g})
     +\frac{n-2}{2}u^{-1}(|\bar\nabla u|^2+1)
        \left(\sigma_1(A_{\bar g})-\frac{n-1}{2}h^2\right)\\
 &\qquad-(n-2)h\langle\bar\nabla h,\bar\nabla u\rangle.
\end{align*}
Since
\[
 2\sigma_2(B)=\frac{n-2}{n-1}\sigma_1(B)^2-|B^\circ|^2,
 \qquad
 2\sigma_2(A_{\bar g})=\frac{n-2}{n-1}\sigma_1(A_{\bar g})^2-|B^\circ|^2,
\]
the common $|B^\circ|^2$ terms cancel after adding $2u\sigma_2(B)$
to the divergence formula. This gives~\eqref{bnd:local-identity}.
Substitution of~\eqref{bnd:factorizations} gives~\eqref{bnd:remainder}.
The expression in square brackets is strictly positive because
$u,H_g,h>0$; hence this
remainder is nonnegative and vanishes precisely when $H_g=h$.

For the flux, we use the divergence-free Newton tensor and its normal
component from~\cite[proofs of Lemmas~2.2 and~5.1]{CW18}:
\begin{equation}\label{bnd:newton2-divergence}
 \operatorname{div}_gT_2(A_g)=0,
 \qquad T_2(A_g)(\nu,\nu)=\sigma_2(B).
\end{equation}
Here the divergence-free property uses local conformal flatness.
The conformal Killing field in~\cite[proof of Lemma~5.4]{CW18}
satisfies
\[
 \frac12\mathcal L_Xg=\chi g,
 \qquad \chi=x_{n+1}+X(\log u),
 \qquad X|_\Sigma=u\nu.
\]
The formula for $\chi$ follows from $g=u^2g_{\mathrm{rd}}$ and
$\frac12\mathcal L_Xg_{\mathrm{rd}}=x_{n+1}g_{\mathrm{rd}}$.
Using~\eqref{bnd:newton-traces}, we retain the interior term in their
calculation:
\[
 \operatorname{div}_g\bigl(T_2(A_g)(X)\bigr)
 =\left\langle T_2(A_g),\frac12\mathcal L_Xg\right\rangle_g
 =(n-2)\chi\sigma_2(A_g).
\]
The divergence theorem therefore gives
\begin{equation}
 \label{bnd:CK-flux}
 \int_\Sigma u\sigma_2(B)\,\mathrm{d}\sigma_g
 =(n-2)\int_M\chi\sigma_2(A_g)\,\dd v_g.
\end{equation}
The tangential divergence in~\eqref{bnd:local-identity} integrates
to zero because $\Sigma$ has no boundary. Formula~\eqref{bnd:CK-flux}
replaces $2\int_\Sigma u\sigma_2(B)$ by
$2(n-2)\int_M\chi\sigma_2(A_g)$. Dividing by $n-2$ and moving the
reference-derivative term to the other side proves
\eqref{bnd:integrated-identity}.
\end{proof}

\subsection{Flatness of the interior}

Once the boundary geometry is round, a strict spherical comparison
proves that the filling is flat. We give the short argument, including
the conformal normalization of the boundary.

\begin{lemma}
\label{bnd:flat-filling}
Suppose that $A_g\in\overline{\Gamma_2^+}$,
$\sigma_2(A_g)=0$ on $M$, $H_g>0$ on $\Sigma$, and
$B=0$. Then $g$ is flat and $(M,g)$ is isometric to a
Euclidean ball.
\end{lemma}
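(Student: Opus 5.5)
The plan is to show, in order, that the boundary metric is a round sphere, then that (after a conformal normalization) the boundary data of $u$ coincide with those of the flat factor $a/(1+x_{n+1})$, and finally that $u$ equals this factor by a maximum-principle comparison.

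\emph{Step 1: the boundary is round.} Since $B=0$, \eqref{bnd:gauss} gives $A_{\bar g}=\frac{H_g^2}{2}\bar g$ and $B^\circ=0$. The intrinsic contracted Bianchi identity \eqref{bnd:bianchi} then gives $\bar\nabla_b\bigl(\tfrac{H_g^2}{2}\bigr)=\tfrac{n-1}{2}\bar\nabla_b H_g^2$. Because $n-1\ge3$, this forces $H_g$ to be constant on the connected sphere $\Sigma$; by assumption $H_g>0$. Hence $\bar g$ is Einstein with constant sectional curvature $H_g^2$. So $\bar g=u^2g_{\mathbb S^{n-1}}$ is a round metric of radius $a:=1/H_g$. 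By Obata's theorem on $\mathbb S^{n-1}$, $\bar g=a^2\Phi^*g_{\mathbb S^{n-1}}$ for a conformal diffeomorphism $\Phi$ of $\mathbb S^{n-1}$. Every such $\Phi$ extends to a conformal diffeomorphism of $\mathbb S^n_+$. Replacing $g$ by its pullback under this extension changes neither the hypotheses nor the conclusion, so I may assume $u|_\Sigma\equiv a$ and $H_g\equiv1/a$.

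\emph{Step 2: matching Cauchy data.} Let $u_0:=a/(1+x_{n+1})$, so that $u_0^2g_{\mathrm{rd}}$ is the flat metric of a Euclidean ball with $u_0|_\Sigma=a$ and mean curvature $1/a$. The equator is totally geodesic for $g_{\mathrm{rd}}$, so under $g=u^2g_{\mathrm{rd}}$ the mean curvature is $H_g=u^{-2}\partial_{\nu_{\mathrm{rd}}}u$, up to the sign fixed by the outward normal. Since $u$ and $u_0$ have the same boundary values and the same constant $H$, they also have equal normal derivatives on $\Sigma$. Set $v=u^{(n-2)/2}$, $v_0=u_0^{(n-2)/2}$ and $w=v-v_0$. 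Then $w=0$ and $\partial_\nu w=0$ on $\Sigma$.

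\emph{Step 3: comparison.} From $A_g\in\overline{\Gamma_2^+}$ we have $\sigma_1(A_g)\ge0$, i.e.\ $R_g\ge0$. With the conformal Laplacian $L=-\frac{4(n-1)}{n-2}\Delta_{\mathrm{rd}}+n(n-1)$, this reads $Lv=R_gv^{(n+2)/(n-2)}\ge0$, while $Lv_0=0$. Hence $Lw\ge0$ with zero Dirichlet data. The zeroth-order coefficient of $L$ is positive, so the weak maximum principle gives $w\ge0$. If $w\not\equiv0$, the strong maximum principle gives $w>0$ in the interior, and the Hopf lemma gives $\partial_\nu w<0$ somewhere (indeed everywhere) on $\Sigma$. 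This contradicts Step 2. Therefore $u=u_0$, $g$ is flat, and $(M,g)$ is isometric to the Euclidean ball of radius $a$.

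The hypothesis $\sigma_2(A_g)=0$ enters only through admissibility; the comparison needs just $R_g\ge0$.

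\emph{Main obstacle.} The delicate step is the boundary normalization in Step 1 together with the sign bookkeeping in Step 2. One must check two things:
\begin{itemize}
\item the extension of $\Phi$ to the hemisphere preserves the hypotheses, in particular umbilicity, the value of $H_g$, and the condition $B=0$, all of which are conformally natural;
\item the normal-derivative identity for $u$ and $u_0$ uses the same outward normal and the same sign convention.
\end{itemize}
Once these are checked, the Hopf argument is routine.
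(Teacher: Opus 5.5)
Your proof is correct. Step 1 and the boundary normalization match the paper: the Bianchi identity gives constant $H_g$, the boundary is a round sphere of radius $1/H_g$, and a M\"obius extension normalizes the boundary factor. The two points you flag as obstacles are harmless. Pulling back by a diffeomorphism of $M$ preserving $\Sigma$ is an isometry, so all hypotheses survive. Also, $H_g=u^{-2}\partial_\nu u$ for the outward $g_{\mathrm{rd}}$-normal gives $\partial_\nu u=a=\partial_\nu u_0$.

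Where you genuinely differ is the last step.

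\emph{The paper's route.} After passing to the ball with $v=1$ on $\partial\mathbb B^n$, the paper never uses the normal derivative. It obtains $v\ge1$ from $\Delta_\delta v\le0$. It obtains $v\le1$ by comparing with the spherical caps $v_\varepsilon$: an interior maximum of $v/v_\varepsilon$ above $1$ would make $A_{g_v}$ positive definite, contradicting $\sigma_2(A_{g_v})=0$.

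\emph{Your route.} You feed the mean-curvature value in as Neumann data and close with the linear Hopf lemma. This needs only $R_g\ge0$, so the hypothesis $\sigma_2(A_g)=0$ is actually superfluous for this lemma. In the paper's normalization your idea becomes even shorter. After rescaling, $H_{g_v}=1$ and $v=1$ on the boundary, so \eqref{var:scalar-mean} gives $\partial_\nu v=0$. Then $\int_{\mathbb B^n}\Delta_\delta v\,\mathrm{d}x=0$ with $\Delta_\delta v\le0$, so $v$ is harmonic with boundary value $1$, hence $v\equiv1$.

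\emph{What each buys.} The paper's comparison gives a Dirichlet-type rigidity: under $R_g\ge0$ and $\sigma_2(A_g)=0$, the boundary metric alone forces flatness, with no use of the mean curvature. Your argument uses the full Cauchy data (boundary metric and mean curvature). In exchange it is purely linear and more elementary, and it holds for the broader class $R_g\ge0$, essentially a conformally flat analogue of Shi--Tam-type rigidity.
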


\begin{proof}
By~\eqref{bnd:gauss}, $A_{\bar g}=\frac12H_g^2\bar g$ and
$\sigma_1(A_{\bar g})=(n-1)H_g^2/2$. The contracted Bianchi
identity~\eqref{bnd:bianchi} therefore gives
\[
 \frac12\dd(H_g^2)=\frac{n-1}{2}\dd(H_g^2),
 \qquad (n-2)\dd(H_g^2)=0.
\]
Since $n\ge4$, $H_g>0$, and $\Sigma$ is connected, $H_g$ is a
positive constant. The sectional Gauss equation
\eqref{bnd:gauss-sectional} then gives constant sectional curvature
$H_g^2$. As $\Sigma$ is compact and simply connected, it is the round
sphere of radius $H_g^{-1}$.

Identify the closed hemisphere conformally with the closed unit
ball by stereographic projection from the opposite pole. In these
coordinates, $H_g^2\bar g$ is a metric conformal to the standard
sphere metric and isometric to it. Choose an isometry $\varphi$ from the
standard sphere to $(\Sigma,H_g^2\bar g)$. Since $\bar g$ is conformal
to the standard sphere metric, $\varphi$ is a conformal self-map of
the standard sphere. It extends to a conformal automorphism $\Phi$
of the closed ball; see
\cite[Section~1.1, Lemma~1.1.3 and Remark~1.1.4]{CNS13}.
By construction,
\[
 \bigl(H_g^2\Phi^*g\bigr)|_{T\mathbb S^{n-1}}
 =\varphi^*(H_g^2\bar g)=g_{\mathbb S^{n-1}}.
\]
Consequently, the pulled-back and rescaled metric can be written as
\[
 g_v\coloneqq v^{4/(n-2)}\delta,
 \qquad v>0\ \text{on }\overline{\mathbb B^n},
 \qquad v=1\ \text{on }\partial\mathbb B^n,
\]
where $\delta$ is the Euclidean metric and $v$ is smooth.
The boundary value $v=1$ follows from the displayed equality of
boundary metrics and $v>0$.
Conformal pullback and constant rescaling preserve
$R_g\ge0$ and $\sigma_2(A_g)=0$.
Here $R_g\ge0$ follows from $A_g\in\overline{\Gamma_2^+}$.
By \eqref{id:conformal-trace},
\[
 R_{g_v}=-\frac{4(n-1)}{n-2}
 v^{-(n+2)/(n-2)}\Delta_\delta v\ge0.
\]
Thus $\Delta_\delta v\le0$. Since $v=1$ on the boundary, the
minimum principle gives $v\ge1$ throughout the ball.

For $\varepsilon>0$, define the comparison conformal factor and metric by
\[
 v_\varepsilon(x)\coloneqq
 \left(\frac{1+\varepsilon}{1+\varepsilon|x|^2}\right)^{(n-2)/2},
 \qquad
 g_\varepsilon\coloneqq v_\varepsilon^{4/(n-2)}\delta
 =\left(\frac{1+\varepsilon}{1+\varepsilon|x|^2}\right)^2\delta.
\]
This metric has constant sectional curvature
$4\varepsilon/(1+\varepsilon)^2$ and
\[
 A_{g_\varepsilon}
 =\frac{2\varepsilon}{(1+\varepsilon)^2}g_\varepsilon>0.
\]
Indeed, the displayed metric is a constant multiple of the
stereographic round metric after the dilation $x\mapsto\sqrt\varepsilon x$.
Also $v_\varepsilon=1$ on the boundary.
If $\max_{\overline{\mathbb B^n}}v/v_\varepsilon>1$, this maximum occurs
at an interior point, because $v/v_\varepsilon=1$ on the boundary.
At this point, $\log(v/v_\varepsilon)$ also has a maximum. Its
differential vanishes, and its Hessian is nonpositive as a bilinear
form; in particular,
$\nabla_{g_\varepsilon}^2\log(v/v_\varepsilon)\le0$.
Since $g_v=(v/v_\varepsilon)^{4/(n-2)}g_\varepsilon$, the gradient
terms in \eqref{id:conformal-schouten} vanish there, giving
\[
 A_{g_v}=A_{g_\varepsilon}
 -\frac{2}{n-2}\nabla_{g_\varepsilon}^2\log(v/v_\varepsilon)>0.
\]
Here the strict inequality comes from $A_{g_\varepsilon}>0$,
while the subtracted Hessian is nonpositive. Raising an index with
the positive definite metric $g_v$ preserves positivity. Hence all
eigenvalues of $g_v^{-1}A_{g_v}$ would be positive. The sum of their
pairwise products would then give $\sigma_2(A_{g_v})>0$, a contradiction.
It follows that $v\le v_\varepsilon$ for every $\varepsilon>0$.
Since $v_\varepsilon\to1$ uniformly on the closed ball as
$\varepsilon\downarrow0$, we obtain $v\le1$. Together with $v\ge1$,
this yields $v=1$. Undoing the pullback and rescaling shows that
$(M,g)$ is isometric to the Euclidean ball of radius $H_g^{-1}$.
\end{proof}

Theorem~\ref{main:boundary} starts from the general
estimate~\eqref{bnd:general-estimate}, which retains the interior
curvature term and the derivative of the boundary reference.
For $\sigma_2(A_g)=0$ and $F'\le0$, it gives $H_g=h$;
the flux identity yields $B=0$, and Lemma~\ref{bnd:flat-filling}
then gives rigidity. For constant positive data, this removes
Case--Wang's pinching assumption~\eqref{0908equ1} in every dimension
$n\ge4$~\cite{CW18}.

\begin{proof}[Proof of Theorem~\ref{main:boundary}]
We first prove \eqref{bnd:general-estimate}. By
\eqref{bnd:anchor}, $0<H_g\leq h$. All coefficients multiplying
$|\bar\nabla u|^2$ in~\eqref{bnd:remainder} are nonnegative.
We may therefore discard that term. Applying the
arithmetic--geometric mean inequality to the two remaining positive
summands gives
\begin{align*}
 &\frac1{n-1} u\left[\sigma_1(B)^2-\sigma_1(A_{\bar g})
       \left(\sigma_1(A_{\bar g})-\frac{n-1}{2}h^2\right)\right]
 +\frac12u^{-1}(|\bar\nabla u|^2+1)
       \left(\sigma_1(A_{\bar g})-\frac{n-1}{2}h^2\right)\\
 &\qquad\geq\frac{n-1}{12H_g}(H_g-h)^2
       \left[u(H_g^3+2hH_g^2+4h^2H_g+2h^3)
                    +u^{-1}(H_g+2h)\right]\\
 &\qquad\geq\frac{n-1}{6H_g}(H_g-h)^2
       \sqrt{(H_g^3+2hH_g^2+4h^2H_g+2h^3)(H_g+2h)}.
\end{align*}
For $0<H_g\leq h$,
\[
 \begin{split}
 &(H_g^3+2hH_g^2+4h^2H_g+2h^3)(H_g+2h)-27h^2H_g^2\\
 &\qquad=(h-H_g)(4h^3+14h^2H_g-5hH_g^2-H_g^3)\geq0.
 \end{split}
\]
The second factor is positive: it can be written as
\[
 4h^3+8h^2H_g+H_g(h-H_g)(6h+H_g)>0.
\]
Thus the square root in the preceding estimate is at least
$3\sqrt3\,hH_g$. The integrand on the left-hand side of
\eqref{bnd:integrated-identity} is therefore bounded below by
\[
 \frac{\sqrt3(n-1)}{2}h(H_g-h)^2
 =\frac{3\sqrt3}{2}F(u)\left(1-\frac{H_g}{h}\right)^2,
\]
where the equality uses $F(u)=(n-1)h^3/3$.
Differentiating this relation along $\Sigma$ gives
\[
 (n-1) h^2\bar\nabla h=F'(u)\bar\nabla u,
 \qquad
 h\langle\bar\nabla h,\bar\nabla u\rangle
 =\frac{F'(u)}{(n-1) h}|\bar\nabla u|^2.
\]
Integrating the lower bound and substituting this derivative formula
in~\eqref{bnd:integrated-identity} proves
\eqref{bnd:general-estimate}, after moving the $F'$ term to the left.

\emph{Part \textup{(i)}.}
Since $F'\leq0$ and $h>0$, the left-hand side of
\eqref{bnd:general-estimate} is a sum of nonnegative terms, whereas
the right-hand side is zero. Both integrands are continuous.
Since $F(u)>0$, a point with $H_g\ne h$ would make the first integrand
strictly positive on a neighborhood, and hence would make the first
integral positive. Therefore $H_g=h$
everywhere on $\Sigma$, and~\eqref{bnd:H2} gives $\sigma_1(B)=0$.
Equation \eqref{bnd:CK-flux} now gives
\[
 0=\int_\Sigma u\sigma_2(B)\,\mathrm{d}\sigma_g
   =-\frac12\int_\Sigma u|B^\circ|^2\,\mathrm{d}\sigma_g.
\]
Because $u>0$ and $|B^\circ|^2$ is continuous, this equality forces
$B^\circ=0$ pointwise. Together with $\sigma_1(B)=0$, it gives $B=0$.
Lemma~\ref{bnd:flat-filling} proves that $(M,g)$ is a Euclidean ball.

If $F'<0$ on the boundary range of $u$, then
$-F'(u)/((n-1)h(u))>0$ on $\Sigma$.
The second integral in~\eqref{bnd:general-estimate} must also vanish,
so the same argument gives $\bar\nabla u=0$. Since $\Sigma$ is
connected, $u|_\Sigma=a$ for a constant $a>0$.
To determine this conformal factor in the interior, let
$\pi:\mathbb S^n_+\longrightarrow\overline{\mathbb B^n}$ be
stereographic projection from the south pole; explicitly,
$\pi(x_1,\ldots,x_{n+1})\coloneqq (x_1,\ldots,x_n)/(1+x_{n+1})$.
The standard stereographic metric formula~\cite[Section~3]{LeeParker87} is
\[
 \pi^*\delta=(1+x_{n+1})^{-2}g_{\mathrm{rd}}.
\]
Write $g=\pi^*(v^{4/(n-2)}\delta)$. The factors are related by
\begin{equation}
 \label{bnd:factor-conversion}
 u=\frac{v^{2/(n-2)}\circ\pi}{1+x_{n+1}}.
\end{equation}
Since $g$ is flat, its scalar curvature vanishes and the Euclidean
Yamabe factor $v$ is harmonic. On the boundary,
$v=a^{(n-2)/2}$. Uniqueness for the Dirichlet problem
\cite[Chapters~3 and~6]{GT01} implies that $v$ is constant
on the Euclidean ball. Thus $g=\pi^*(a^2\delta)$, and
\eqref{bnd:factor-conversion} gives $u=a/(1+x_{n+1})$.
A Euclidean ball with length scale $a$ has $H_g=1/a$ and
$B=0$; formula~\eqref{bnd:H2} then gives
$H_2(g)=(n-1)/(3a^3)$. This proves~\eqref{bnd:monotone-flat-factor}.

\emph{Part \textup{(ii)}.}
When $F=\Lambda$ is constant, $F'=0$, and
\eqref{bnd:general-estimate} gives the first inequality in
\eqref{bnd:defect-estimate} after multiplication by $2/(3\sqrt3)$.
For the second inequality, use
$\chi\sigma_2(A_g)\leq\|\chi\|_{L^\infty(M)}\sigma_2(A_g)$,
which follows from $\sigma_2(A_g)\geq0$. No pointwise sign assumption
on $\chi$ is needed.
\end{proof}

Assume here that $\sigma_2(A_g)=0$, as in Part~\textup{(i)}.
For constant $F=\Lambda>0$, that part gives
$H_g=(3\Lambda/(n-1))^{1/3}$ and a Euclidean ball of radius $((n-1)/(3\Lambda))^{1/3}$,
since $B=0$ and \eqref{bnd:H2} give $H_2(g)=(n-1)H_g^3/3$.
For $F(u)=\Lambda u^{-p}$ with $\Lambda>0$ and $p>0$, strict monotonicity gives
\eqref{bnd:monotone-flat-factor} and
\[
 a^{3-p}=\frac{n-1}{3\Lambda}.
\]
If $p\ne3$, this determines exactly one conformal factor in the
standard hemisphere coordinates. If $p=3$, solutions exist precisely
when $\Lambda=(n-1)/3$, and then every $a>0$ gives a solution.
Conversely, each factor obtained in this way satisfies all the required
equations and defines a metric that is the pullback of $a^2\delta$.

\begin{remark}
\label{bnd:CW-comparison}
In Theorem~\ref{main:boundary}\textup{(i)}, the general estimate
and flux identity first determine the boundary geometry when
$\sigma_2(A_g)=0$ and $F'\le0$. Lemma~\ref{bnd:flat-filling}
then proves flatness using $R_g\ge0$, $\sigma_2(A_g)=0$, and a
strict spherical comparison. This step does not require a
separate Ricci curvature bound.
\end{remark}

\begin{remark}
\label{bnd:quantitative-scope}
We do not claim that the constant in \eqref{bnd:defect-estimate} is
optimal. The estimate bounds the squared deviation of the mean
curvature from $h$ in the given conformal coordinates. Its application
to a sequence requires bounds on $\Lambda$ and on the conformal Killing
factor $\chi$. It does not give a distance estimate from the flat
metrics or a uniform bound on the second derivatives of $u$.
\end{remark}

\subsection{Application to relative local minimizers}
\label{var:section}

We apply Theorem~\ref{main:boundary} to relative local minimizers
in the scalar-curvature class on the ball. The key step is to derive
the Euler--Lagrange equations using variations that preserve the
curvature constraints, including where the scalar curvature vanishes.

On the closed Euclidean unit ball $\overline{\mathbb B^n}\subset\mathbb R^n$,
let $\delta=\sum_{i=1}^n dx_i^2$ be the standard Euclidean metric.
For a positive smooth function $v$, set
\begin{equation}
 \label{var:yamabe-coordinates}
 g_v\coloneqq v^{4/(n-2)}\delta.
\end{equation}
Let $\nu$ be the Euclidean outward unit normal.
The standard conformal laws in~\eqref{id:conformal-trace} and
\cite[Section~2.2]{CW18} give
\begin{align}
 \label{var:scalar-mean}
 R_{g_v}&=-\frac{4(n-1)}{n-2}
        v^{-(n+2)/(n-2)}\Delta_\delta v,
 &H_{g_v}&=v^{-n/(n-2)}
       \left(v+\frac{2}{n-2}\partial_\nu v\right).
\end{align}
Define
\[
 \mathcal C_1\coloneqq
 \{v\in C^\infty(\overline{\mathbb B^n}):
                v>0,\ R_{g_v}\geq0,\ H_{g_v}>0\}.
\]
We also write $\mathcal C_1$ for the associated metrics $g_v$;
this curvature class is distinct from the regularity space $C^1$.
Local minimality uses the relative $C^\infty(\overline{\mathbb B^n})$
topology on the stated constraint set. Thus the minimizing inequality
holds along every admissible curve converging to $v$ in $C^\infty$,
for sufficiently small parameter values. The conclusions also apply
to relative $C^2$ local minimizers, since $C^\infty$ convergence implies
$C^2$ convergence. Every variation below has this convergence property.
The positive minima of $v$ on the closed ball and of
$v+2\partial_\nu v/(n-2)$ on the boundary ensure that these two
strict inequalities persist under sufficiently small $C^1$ perturbations.
The remaining condition is $-\Delta v\ge0$, which we check for each curve.

For $n\geq4$, define
\begin{equation}
 \label{var:functional}
 \mathcal S_2(g)\coloneqq \int_{\mathbb B^n}\sigma_2(A_g)\,\dd v_g
                +\int_{\mathbb S^{n-1}}H_2(g)\,\mathrm{d}\sigma_g.
\end{equation}
Fix $\mathcal A>0$ and let
\[
 \mathcal V_{\mathcal A}\coloneqq
 \{\tilde g\in[\delta]:
   \operatorname{Area}_{\tilde g}(\mathbb S^{n-1})=\mathcal A\}.
\]
This fixes the total boundary area, not the boundary values of $v$.
For $n=5,6$, Case--Wang~\cite[Theorem~1.3]{CW20} proved flatness
for $g\in\mathcal V_{\mathcal A}\cap\mathcal C_1$ satisfying
\[
 \mathcal S_2(g)\le\mathcal S_2(\tilde g)
 \quad\text{for every nearby }\tilde g\in\mathcal V_{\mathcal A}.
\]
The comparison metric $\tilde g$ need not belong to $\mathcal C_1$.
Their Theorem~1.4 gives the analogous result in dimension four
for the conformal primitive. Their proofs use the Frank--Lieb variational argument.

Our minimizing metric $g$ still belongs to
$\mathcal V_{\mathcal A}\cap\mathcal C_1$, but we compare it only
with nearby $\tilde g\in\mathcal V_{\mathcal A}\cap\mathcal C_1$.
Thus the comparison metrics must also satisfy
$R_{\tilde g}\ge0$ and $H_{\tilde g}>0$.
This is \emph{relative local minimality}. It weakens the assumption
by restricting the comparison set; see the discussion following
\cite[Theorem~1.4]{CW20}.

For $n\ge5$, let $p_*=2(n-1)/(n-2)$ be the critical trace exponent.
For $0<q\le p_*$, define
\begin{equation}
 \label{var:quotient}
 \mathcal Q_q(v)\coloneqq
 \frac{\mathcal S_2(g_v)}
 {\left(\int_{\mathbb S^{n-1}}v^q\,\mathrm{d}\sigma_\delta\right)^{\frac{2(n-4)}{q(n-2)}}}.
\end{equation}
Multiplication of $v$ by $a>0$ preserves $\mathcal C_1$, and
$\mathcal S_2(g_{av})=a^{2(n-4)/(n-2)}\mathcal S_2(g_v)$.
Hence $\mathcal Q_q$ is invariant under this scaling.
For a fixed positive $v$, the normalization
\[
 w\longmapsto
 \left(\frac{\int_{\mathbb S^{n-1}}v^q\,\dd\sigma_\delta}
 {\int_{\mathbb S^{n-1}}w^q\,\dd\sigma_\delta}\right)^{1/q}w
\]
preserves $\mathcal C_1$, fixes $v$, and is continuous in $C^\infty$
near $v$. This proves that relative local minimality of $\mathcal Q_q$
is equivalent to that of $\mathcal S_2$ with the boundary integral fixed.
At $q=p_*$ this constraint fixes the boundary area.
For every $q>0$, the boundary integral is positive and varies smoothly
near $v$, since nearby factors have a common positive lower bound.
In particular, $0<q<1$ causes no differentiability issue.

When $n=4$, $\mathcal S_2$ is conformally invariant. We therefore
use its conformal primitive. For $g=e^{2f}\delta$, define
\begin{equation}
 \label{var:F2-definition}
 \mathcal S_2^{\mathrm{prim}}(e^{2f}\delta)
 \coloneqq\int_0^1\left[
 \int_{\mathbb B^4}f\sigma_2(A_{e^{2tf}\delta})
                         \,\dd v_{e^{2tf}\delta}
 +\int_{\mathbb S^3}fH_2(e^{2tf}\delta)
                         \,\mathrm{d}\sigma_{e^{2tf}\delta}
 \right]\mathrm{d}t.
\end{equation}
Here $f=\log v$. This is the conformal primitive of
Case--Wang~\cite[Proposition~2.3]{CW18}, with reference metric $\delta$.

The independent variational step is the following lemma.

\begin{lemma}[The interior equation from relative minimality]
\label{var:interior-stationarity}
Let $n\geq5$ and $v\in\mathcal C_1$.
If $v$ is a relative local minimizer of $\mathcal S_2(g_v)$ among
members of $\mathcal C_1$ with the same boundary value, then
$\sigma_2(A_{g_v})=0$ on the closed ball.
\end{lemma}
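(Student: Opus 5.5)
The plan is to compute the first variation of $\mathcal S_2$ along conformal variations that fix the boundary value of $v$, and then to exploit the constraint $R_{g_v}\ge0$ through one-sided variations. The input is Case--Wang's variational formula~\cite[Section~2]{CW18}, with boundary dimension $n-1$. For $g_t=e^{2t\psi}g$ and $n\ne4$, the boundary term $H_2$ in~\eqref{var:functional} is exactly the one that makes the functional natural, so that
\[
 \frac{\dd}{\dd t}\Big|_{t=0}\mathcal S_2(g_t)
 =(n-4)\left[\int_{\mathbb B^n}\psi\sigma_2(A_g)\,\dd v_g
 +\int_{\mathbb S^{n-1}}\psi H_2(g)\,\dd\sigma_g\right].
\]
For a curve $v_t=v+t\varphi$ with $\varphi=0$ on $\partial\mathbb B^n$, we have $\psi=\frac{2}{n-2}\varphi/v$ at $t=0$. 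Thus $\psi$ vanishes on the boundary, and the derivative reduces to $(n-4)\int\psi\sigma_2(A_{g_v})\,\dd v_{g_v}$. Since $n\ge5$, the coefficient $n-4$ is positive.

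\emph{Step 1 (admissibility).} Along $v_t$, positivity of $v_t$ and of $H_{g_{v_t}}$ persist for small $|t|$, since both are strict inequalities stable under small $C^1$ perturbations. The boundary value is unchanged. The only constraint to check is $-\Delta_\delta v_t\ge0$.

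\emph{Step 2 (two-sided variations on $\{R>0\}$).} Let $U\coloneqq\{R_{g_v}>0\}\cap\mathbb B^n$, which is open. For $\varphi\in C_c^\infty(U)$, the function $-\Delta v$ is bounded below by a positive constant on $\operatorname{supp}\varphi$. Hence $-\Delta v_t\ge0$ for both signs of small $t$, and minimality forces the derivative to vanish. This gives $\sigma_2(A_{g_v})=0$ on $U$.

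\emph{Step 3 (one-sided variations and the sign of $\sigma_2$).} On $\{R_{g_v}=0\}$, the Schouten decomposition~\eqref{id:sigma} gives $\sigma_2(A_{g_v})=-|E_{g_v}|^2/(2(n-2)^2)\le0$. Combined with Step~2, this yields $\sigma_2(A_{g_v})\le0$ everywhere. For any $\eta\in C_c^\infty(\mathbb B^n)$ with $\eta\ge0$, let $\varphi\coloneqq G\eta\ge0$ be its Dirichlet potential, so that $-\Delta\varphi=\eta$ and $\varphi|_{\partial\mathbb B^n}=0$. Then $-\Delta v_t=-\Delta v+t\eta\ge0$ for all $t\ge0$, and the curve converges in $C^\infty$. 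Relative minimality gives only the one-sided inequality
\[
 \int_{\mathbb B^n}\frac{\varphi}{v}\,\sigma_2(A_{g_v})\,v^{2n/(n-2)}\,\dd x\ge0 .
\]
Set $w\coloneqq\sigma_2(A_{g_v})\,v^{(n+2)/(n-2)}\le0$. By symmetry of the Green operator, the inequality reads $\int\eta\,Gw\ge0$ for every $\eta\ge0$, so $Gw\ge0$. But $G$ has a positive kernel and $w\le0$, so $Gw\le0$. Hence $Gw\equiv0$, and then $w=-\Delta(Gw)=0$. Therefore $\sigma_2(A_{g_v})=0$ in the open ball, and by continuity on the closed ball.

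I expect the main obstacle to be Step~3. Near points where $R_{g_v}=0$, no two-sided variation is available, so the equation cannot be read off directly. The resolution is to combine two facts: the pointwise sign $\sigma_2\le0$ on $\{R=0\}$, and positivity of the Dirichlet Green kernel. Together they turn the one-sided Euler--Lagrange inequality into equality. A secondary point to verify carefully is that Case--Wang's variational formula has no residual boundary term involving $\partial_\nu\psi$ when $\psi=0$ on the boundary. If such a term appears, I would instead choose $\varphi$ vanishing to second order at the boundary, since this kills it without affecting the rest of the argument.
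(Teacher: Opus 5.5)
Your proposal is correct and follows essentially the same route as the paper. Both arguments use Case--Wang's first variation with a velocity vanishing on the boundary, then two-sided variations supported in $\{-\Delta v>0\}=\{R_{g_v}>0\}$, then the sign $\sigma_2(A_{g_v})\le0$ on $\{R_{g_v}=0\}$ from the Schouten decomposition, and finally a one-sided superharmonic variation vanishing on the boundary. The differences are minor: the paper uses the single explicit function $\psi_0=(1-|x|^2)/(2n)$, whose strict interior positivity forces $\sigma_2(A_{g_v})=0$ once $\sigma_2(A_{g_v})\le0$ is known, so your Green-symmetry step is more than needed (one $G\eta$ with $\eta\ge0$, $\eta\not\equiv0$, is already positive inside); also, Case--Wang's formula has no $\partial_\nu\psi$ term, so your fallback is moot, and it would in fact clash with the Hopf lemma for $\varphi=G\eta$.
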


\begin{proof}
For a fixed smooth $\psi$, positivity of $v$ gives
$v+t\psi>0$ for both signs of sufficiently small $t$.
Relative to $g_v$, this curve has conformal velocity
\[
 \left.\frac{\dd}{\dd t}\right|_0
 \frac{2}{n-2}\log\frac{v+t\psi}{v}
 =\frac{2\psi}{(n-2)v}.
\]
Case--Wang's formula~\cite[Proposition~2.3]{CW18} therefore gives the first
variation of our unnormalized functional \eqref{var:functional} as
\begin{align}
 \label{var:first-variation}
 \left.\frac{\mathrm d}{\mathrm dt}\right|_{t=0}
 \mathcal S_2(g_{v+t\psi})
 =\frac{2(n-4)}{n-2}\bigg[&\int_{\mathbb B^n}
  \psi v^{(n+2)/(n-2)}\sigma_2(A_{g_v})\,\mathrm{d}x\notag\\
 &+\int_{\mathbb S^{n-1}}
  \psi v^{n/(n-2)}H_2(g_v)\,\mathrm{d}\sigma_\delta\bigg].
\end{align}
Let $\psi$ have compact support $K$ in the interior set
$\{-\Delta v>0\}$. The function $-\Delta v$ has a positive minimum
on $K$, while $\Delta\psi$ is bounded. Thus
$-\Delta(v+t\psi)\ge0$ for both signs of sufficiently small $t$;
outside $K$ it equals $-\Delta v$.
The boundary value and boundary mean curvature are unchanged because
$\psi$ vanishes near the boundary. These curves lie in $\mathcal C_1$.
Relative minimality makes the derivative of $\mathcal S_2$ along each curve vanish.
Since $\psi$ is arbitrary and $2(n-4)/(n-2)>0$, the fundamental lemma
applied to \eqref{var:first-variation} gives
$\sigma_2(A_{g_v})=0$ on $\{-\Delta v>0\}$.
On $\{\Delta v=0\}$, the scalar curvature vanishes, so
\eqref{id:sigma} gives
\begin{equation}
 \label{var:sigma2-sign}
 \sigma_2(A_{g_v})=-\frac{|E_{g_v}|_{g_v}^2}{2(n-2)^2}\le0.
\end{equation}
These two sets cover the interior, so $\sigma_2(A_{g_v})\le0$
there and, by continuity, on the closed ball.

Now take
\[
 \psi_0(x)\coloneqq \frac{1-|x|^2}{2n},\qquad
 -\Delta\psi_0=1,\qquad \psi_0|_{\mathbb S^{n-1}}=0.
\]
For $t\ge0$, the function $v+t\psi_0$ remains positive and
\[
 -\Delta(v+t\psi_0)=-\Delta v+t\geq0.
\]
On $\mathbb S^{n-1}$,
\[
 v+t\psi_0+\frac{2}{n-2}\partial_\nu(v+t\psi_0)
 =v+\frac{2}{n-2}\partial_\nu v-\frac{2t}{n(n-2)}>0.
\]
The last strict inequality follows from the positive minimum of
$v+2\partial_\nu v/(n-2)$ on the compact boundary.
This variation is admissible for small $t\geq0$ and preserves the
boundary value. The right derivative of the functional at zero is
therefore nonnegative. By
\eqref{var:first-variation} and \eqref{var:sigma2-sign},
\[
 0\leq\frac{2(n-4)}{n-2}\int_{\mathbb B^n}
       \psi_0v^{(n+2)/(n-2)}\sigma_2(A_{g_v})\,\mathrm{d}x\leq0.
\]
The prefactor and the weight $\psi_0v^{(n+2)/(n-2)}$ are positive
in the interior. If $\sigma_2(A_{g_v})$ were negative at one interior
point, continuity would make the integral strictly negative.
Thus $\sigma_2(A_{g_v})=0$ throughout the interior, and continuity
extends this equality to the boundary.
\end{proof}

Applying this lemma and Theorem~\ref{main:boundary} gives the following classification.

\begin{corollary}[Classification of smooth relative local minimizers]
\label{main:minima}
\label{var:classification}
\label{var:critical-four}
\begin{enumerate}[label=\textup{(\roman*)},leftmargin=*]
\item Let $n\geq5$, $0<q\leq p_*$, and let $v\in\mathcal C_1$ be a
relative local minimizer of $\mathcal Q_q$.
Then $v$ satisfies the conformal Euler--Lagrange equations
\begin{equation}
 \label{var:EL}
 \begin{gathered}
 \sigma_2(A_{g_v})=0,
 \qquad
 H_2(g_v)=\Lambda v^{q-p_*}\quad\hbox{on }\mathbb S^{n-1},\\
 \Lambda=\frac{\mathcal S_2(g_v)}
          {\int_{\mathbb S^{n-1}}v^q\,\mathrm{d}\sigma_\delta}>0.
 \end{gathered}
\end{equation}
If $q<p_*$, then $v$ is constant.
If $q=p_*$, then
\begin{equation}
 \label{var:critical-family}
 g_v=\lambda^2\Phi^*\delta
\end{equation}
for some $\lambda>0$ and a conformal automorphism $\Phi$ of the
unit ball. In both cases its quotient value is
\begin{equation}
 \label{var:minimum-value}
 \mathcal Q_q(v)=\frac{n-1}{3}|\mathbb S^{n-1}|^{1-\frac{2(n-4)}{q(n-2)}}.
\end{equation}
\item Let $n=4$ and fix the boundary area $\mathcal A>0$. Suppose that $v\in\mathcal C_1$ is a
smooth relative local minimizer of $\mathcal S_2^{\mathrm{prim}}(v^2\delta)$ subject to
\[
 \int_{\mathbb S^3}v^3\,\mathrm{d}\sigma_\delta=\mathcal A.
\]
Then
\[
 \sigma_2(A_{v^2\delta})=0,
 \qquad H_2(v^2\delta)=\frac{|\mathbb S^3|}{\mathcal A},
\]
and $v^2\delta=\lambda^2\Phi^*\delta$ for a conformal automorphism
$\Phi$ of the unit ball, where $\lambda>0$ is the constant length scale and
$\lambda^3|\mathbb S^3|=\mathcal A$.
\end{enumerate}
\end{corollary}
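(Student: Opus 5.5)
For part (i), I first derive the Euler--Lagrange system \eqref{var:EL}. The normalization map described before Lemma~\ref{var:interior-stationarity} shows that relative local minimality of $\mathcal Q_q$ is equivalent to relative local minimality of $\mathcal S_2$ with $\int_{\mathbb S^{n-1}}v^q\,\dd\sigma_\delta$ fixed. In particular, $v$ minimizes $\mathcal S_2$ among admissible factors with the same boundary values, so Lemma~\ref{var:interior-stationarity} gives $\sigma_2(A_{g_v})=0$ on the closed ball.

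For the boundary equation, I vary with $\psi$ supported near the boundary and with $\psi|_{\mathbb S^{n-1}}=\phi$ arbitrary. I take $\psi=\phi\,\eta$, where $\eta$ is a radial cutoff chosen so that $-\Delta(v+t\psi)\ge0$ is preserved. The hard case is where $\Delta v=0$ near the boundary, and there I plan to correct by adding $c|t|\psi_0$-type terms whose first-order contribution vanishes. Concretely, I use curves $v+t\psi+Ct^2\psi_0$ or $v+t\psi+C|t|\psi_0$ and compare one-sided derivatives. I expect this to be the main obstacle: two-sided admissible variations with prescribed boundary trace may fail where $R_{g_v}=0$. The cleanest route I would try first is this. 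Since $\sigma_2(A_{g_v})=0$ in the interior, the interior term of \eqref{var:first-variation} vanishes for every $\psi$. So I may take any $\psi$ with trace $\phi$ and $-\Delta\psi\ge -C$, and add $s\psi_0$ with $s=C|t|$. This $\psi_0$-correction has zero boundary trace and zero first-order contribution, because $\sigma_2=0$, and it keeps $-\Delta\ge0$. The scaling $v\mapsto av$ also stays admissible. Then the Lagrange multiplier rule for the constraint $\int v^q$, applied to one-sided derivatives in both directions $\pm\phi$, yields $v^{n/(n-2)}H_2(g_v)=\mu q v^{q-1}$. This gives $H_2(g_v)=\Lambda v^{q-p_*}$, because $q-1-n/(n-2)=q-p_*$. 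Testing with $\psi=v$ (scaling) identifies $\Lambda=\mathcal S_2(g_v)/\int v^q$. Positivity of $\Lambda$ follows since $H_2(g_v)>0$: we have $H_{g_v}>0$, and $\sigma_1(B)\ge0$ from $A\in\overline{\Gamma_2^+}$ (note $R\ge0$ and $\sigma_2=0$ put $A$ in the closed cone).

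Next I pass to the hemisphere via \eqref{bnd:factor-conversion}, writing $g=\pi^*g_v=u^2g_{\mathrm{rd}}$ with $u|_\Sigma=v^{2/(n-2)}$. The data then become $F(u)=\Lambda u^{(n-2)(q-p_*)/2}$, which is positive with $F'\le0$, and strictly decreasing when $q<p_*$. Theorem~\ref{main:boundary}(i) gives a Euclidean ball. If $q<p_*$, \eqref{bnd:monotone-flat-factor} gives $u=a/(1+x_{n+1})$, hence $v$ is constant. If $q=p_*$, the boundary data are constant, and Lemma~\ref{bnd:flat-filling} together with its proof (conformal automorphism $\Phi$ and $v=1$ after rescaling) gives \eqref{var:critical-family}. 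The value \eqref{var:minimum-value} follows by computing at $v\equiv1$, using conformal invariance of $\mathcal Q_{p_*}$ in the critical case: there $\sigma_2=0$, $H_2=(n-1)/3$, and the area is $|\mathbb S^{n-1}|$.

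For part (ii), with $n=4$, the same argument applies with $\mathcal S_2^{\mathrm{prim}}$. Its first variation, from \cite[Proposition~2.3]{CW18}, is $\int\dot f\sigma_2\,\dd v+\int\dot f H_2\,\dd\sigma$, without the $(n-4)$ prefactor. The analogue of Lemma~\ref{var:interior-stationarity} gives $\sigma_2=0$, and the multiplier rule for $\int v^3$ gives constant $H_2$. Theorem~\ref{main:boundary}(i) with constant data then gives $\lambda^2\Phi^*\delta$. The area constraint fixes $\lambda^3|\mathbb S^3|=\mathcal A$, and $H_2=H^3=\lambda^{-3}=|\mathbb S^3|/\mathcal A$.
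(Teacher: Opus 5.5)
Your proposal is correct. Its overall structure matches the paper's proof of Corollary~\ref{main:minima}:
\begin{itemize}
\item the interior equation comes from Lemma~\ref{var:interior-stationarity};
\item the boundary equation comes from varying the boundary trace;
\item $\Lambda>0$ follows from $H_{g_v}>0$ and $T_1(A_{g_v})\ge0$;
\item the classification comes from Theorem~\ref{main:boundary}\textup{(i)} on the hemisphere.
\end{itemize}

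The one real difference is how you build admissible variations with an arbitrary boundary trace $\phi$. The paper uses the harmonic extension $h_\phi$. Then $-\Delta(v+th_\phi)=-\Delta v\ge0$ for both signs of $t$, so the curves are two-sided admissible and the derivative of $\mathcal Q_q$ simply vanishes. The obstruction you expect where $R_{g_v}=0$ therefore never arises.

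Your curves $v+t\psi+C|t|\psi_0$ also work, with one small correction. You need a two-sided bound $C\ge\max|\Delta\psi|$, not only $-\Delta\psi\ge-C$, because $t$ takes both signs. With that choice:
\begin{itemize}
\item the right and left one-sided derivatives are derivatives along $\psi+C\psi_0$ and $\psi-C\psi_0$;
\item both directions have boundary trace $\phi$, since $\psi_0$ vanishes on the boundary;
\item both interior contributions vanish because $\sigma_2(A_{g_v})=0$;
\item the change $-2C|t|/(n(n-2))$ in the boundary mean-curvature expression is harmless.
\end{itemize}
The two one-sided inequalities then combine into the same stationarity identity. This costs some extra bookkeeping, but it lets you use any extension of $\phi$.

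In part~\textup{(ii)}, the paper identifies $\Lambda=|\mathbb S^3|/\mathcal A$ before the classification, using the conformal invariance of $\mathcal S_2$ in dimension four. You instead read off $H_2=\lambda^{-3}=|\mathbb S^3|/\mathcal A$ from the explicit ball afterwards. That is fine, but Theorem~\ref{main:boundary} requires positive data. So before invoking it you should state that the constant is positive, by the pointwise formula $H_2(g_v)=H_{g_v}\sigma_1(B)+H_{g_v}^3>0$.

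For $q=p_*$, the paper gets $\Phi$ by rescaling the isometry supplied by Theorem~\ref{main:boundary}\textup{(i)}. Your appeal to the proof of Lemma~\ref{bnd:flat-filling} is equivalent.
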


\begin{proof}[Proof of Corollary~\ref{main:minima}]
\emph{Part \textup{(i)}.}
Variations with fixed boundary value leave the denominator of
$\mathcal Q_q$ unchanged.
Lemma~\ref{var:interior-stationarity} thus gives the interior
equation. Every $\eta\in C^\infty(\mathbb S^{n-1})$ has a unique
harmonic extension $h_\eta$ that is smooth on the closed ball;
see~\cite[Chapters~3 and~6]{GT01}.
For both signs of sufficiently small $t$,
\[
 -\Delta(v+th_\eta)=-\Delta v,
 \qquad v+th_\eta>0,
\]
and the boundary expression
$(v+th_\eta)+2\partial_\nu(v+th_\eta)/(n-2)$ remains positive:
its change is $t(h_\eta+2\partial_\nu h_\eta/(n-2))$, which is bounded
by a constant times $|t|$. Thus these variations are admissible.
We vary the quotient itself, so no boundary normalization is needed.
The derivative of its boundary integral is
$q\int_{\mathbb S^{n-1}}\eta v^{q-1}\,\dd\sigma_\delta$.
Using \eqref{var:first-variation} and $\sigma_2(A_{g_v})=0$,
the vanishing derivative of \eqref{var:quotient} becomes
\[
 \int_{\mathbb S^{n-1}}\eta
 \left[v^{n/(n-2)}H_2(g_v)
       -\frac{\mathcal S_2(g_v)}{\int_{\mathbb S^{n-1}}v^q\,\mathrm{d}\sigma_\delta}v^{q-1}\right]
       \,\mathrm{d}\sigma_\delta=0.
\]
The boundary integrand is smooth and $\eta$ is arbitrary, so the
expression in brackets vanishes pointwise. Dividing by $v^{n/(n-2)}$
and using $q-1-n/(n-2)=q-p_*$ gives the boundary equation in
\eqref{var:EL}. Moreover, $R_{g_v}\ge0$ and $\sigma_2(A_{g_v})=0$
imply $A_{g_v}\in\overline{\Gamma_2^+}$, hence $T_1(A_{g_v})\ge0$.
Together with $H_{g_v}>0$, the boundary formula~\eqref{bnd:H2} gives
\[
 H_2(g_v)=H_{g_v}T_1(A_{g_v})(\nu_{g_v},\nu_{g_v})
                  +\frac{n-1}{3}H_{g_v}^3>0,
\]
where $\nu_{g_v}$ is the outward $g_v$-unit normal.
The boundary equation now gives $\Lambda=H_2(g_v)v^{p_*-q}>0$.

Pull the metric back to the hemisphere by $\pi$ in
\eqref{bnd:factor-conversion}, so that $g=\pi^*g_v=u^2g_{\mathrm{rd}}$.
That formula gives $u=v^{2/(n-2)}\circ\pi$ on the boundary because
$x_{n+1}=0$ there. Hence the boundary equation is
\[
 H_2(g)=\Lambda u^{(n-2)(q-p_*)/2}.
\]
Set $\alpha=(n-2)(q-p_*)/2$ and $F(s)=\Lambda s^\alpha$ for $s>0$.
This function is positive and smooth. When $0<q<p_*$, we have
$\alpha<0$ and $F'(s)=\Lambda\alpha s^{\alpha-1}<0$.
All the hypotheses of Theorem~\ref{main:boundary}\textup{(i)} now hold.
It gives flatness and a constant boundary value of $u$, hence of $v$.
Flatness implies $R_{g_v}=0$, so \eqref{var:scalar-mean} gives
$\Delta_\delta v=0$. Uniqueness for the Dirichlet problem then forces
$v$ to equal its boundary constant on the entire ball.

When $q=p_*$, we have $F=\Lambda>0$.
Theorem~\ref{main:boundary}\textup{(i)} gives, after transport through $\pi$,
an isometry
$\Psi:(\overline{\mathbb B^n},g_v)\to
(\lambda\overline{\mathbb B^n},\delta)$ for some $\lambda>0$,
after translating the image ball.
The diffeomorphism $\Phi=\lambda^{-1}\Psi$ maps the unit ball to itself and
\[
 \Phi^*\delta=\lambda^{-2}\Psi^*\delta
 =\lambda^{-2}g_v=\lambda^{-2}v^{4/(n-2)}\delta.
\]
Thus $\Phi$ is conformal and \eqref{var:critical-family} follows.

For a constant factor, its powers cancel in \eqref{var:quotient}
by the homogeneity already recorded. The values
$\sigma_2(A_\delta)=0$ and $H_2(\delta)=(n-1)/3$ then give
\eqref{var:minimum-value}.
At $q=p_*$ the denominator is the boundary area raised to
$(n-4)/(n-1)$. Both this area and $\mathcal S_2$ are invariant
under pullback, and the quotient is invariant under constant scaling.
The same value therefore holds for \eqref{var:critical-family}.

\emph{Part \textup{(ii)}.}
For $v_t=v+t\psi$, the conformal velocity relative to $v^2\delta$
is $\psi/v$. Case--Wang's primitive variation
formula~\cite[Proposition~2.3]{CW18} therefore reads
\begin{equation}
 \label{var:F2-variation}
 \left.\frac{\mathrm d}{\mathrm dt}\right|_{t=0}
 \mathcal S_2^{\mathrm{prim}}(v_t^2\delta)
 =\int_{\mathbb B^4}\psi v^3\sigma_2(A_{v^2\delta})\,\mathrm{d}x
  +\int_{\mathbb S^3}\psi v^2H_2(v^2\delta)
                              \,\mathrm{d}\sigma_\delta.
\end{equation}
The compactly supported variations in $\{-\Delta v>0\}$ used above
remain admissible in dimension four and preserve the boundary value,
hence the boundary area. Equation~\eqref{var:F2-variation} gives
$\sigma_2(A_{g_v})=0$ on this set; on $\{\Delta v=0\}$,
\eqref{var:sigma2-sign} still gives $\sigma_2(A_{g_v})\le0$.
The same function $\psi_0=(1-|x|^2)/(2n)$ with $n=4$ gives an
admissible one-sided curve with fixed boundary value. Thus
\[
 0\le\int_{\mathbb B^4}\psi_0v^3\sigma_2(A_{g_v})\,\dd x\le0.
\]
The positive weight and continuity give $\sigma_2(A_{g_v})=0$
on the closed ball. This uses the primitive variation, whose
coefficient is one, rather than the vanishing $n-4$ factor for $\mathcal S_2$.

Let $h_\eta$ be the harmonic extension of an arbitrary smooth
boundary function $\eta$, and put
\[
 a(t)\coloneqq \left(\frac{\mathcal A}{\int_{\mathbb S^3}(v+th_\eta)^3
                                      \,\mathrm{d}\sigma_\delta}
       \right)^{1/3},
 \qquad v_t\coloneqq a(t)(v+th_\eta).
\]
The denominator is positive for both signs of sufficiently small $t$,
so $a(t)>0$ is smooth and $a(0)=1$. Since $h_\eta$ is harmonic,
\[
 -\Delta v_t=a(t)(-\Delta v)\ge0,
 \qquad \int_{\mathbb S^3}v_t^3\,\dd\sigma_\delta=\mathcal A.
\]
Positivity of $v_t$ and of its boundary mean-curvature expression
follows from the admissibility of $v+th_\eta$ and the positive factor $a(t)$.
These are therefore two-sided admissible curves through $v$.
Differentiating their normalization gives
\[
 a'(0)=-\mathcal A^{-1}\int_{\mathbb S^3}v^2\eta\,\dd\sigma_\delta,
 \qquad \left.\frac{\dd v_t}{\dd t}\right|_0=h_\eta+a'(0)v.
\]
Insert this velocity into \eqref{var:F2-variation}.
The interior term vanishes, and relative minimality gives
\[
 \int_{\mathbb S^3}\eta v^2(H_2(g_v)-\Lambda)\,\mathrm{d}\sigma_\delta=0,
 \qquad
 \Lambda\coloneqq \frac1{\mathcal A}\int_{\mathbb S^3}v^3H_2(g_v)\,\mathrm{d}\sigma_\delta.
\]
Since $\eta$ is arbitrary and $v>0$, we obtain $H_2(g_v)=\Lambda$.
In dimension four, $\mathcal S_2$ is conformally invariant by
\cite[Lemma~2.2]{CW18}. Using the interior equation and the area constraint,
\[
 \Lambda\mathcal A=\int_{\mathbb S^3}H_2(g_v)\,\dd\sigma_{g_v}
 =\mathcal S_2(g_v)=\mathcal S_2(\delta)=|\mathbb S^3|.
\]
Hence $\Lambda=|\mathbb S^3|/\mathcal A>0$.
As in Part~\textup{(i)}, the conditions $R_{g_v}\geq0$ and
$\sigma_2(A_{g_v})=0$ imply
$A_{g_v}\in\overline{\Gamma_2^+}$.
Theorem~\ref{main:boundary}\textup{(i)} therefore gives an isometry to a
Euclidean ball. Rescaling this isometry as in Part~\textup{(i)}
yields $g_v=\lambda^2\Phi^*\delta$.
The area constraint gives $\lambda^3|\mathbb S^3|=\mathcal A$.
\end{proof}

\begin{remark}[Scope and the conditional sharp trace inequality]
\label{var:scope}
The proof gives stationarity of $\mathcal Q_q$ under all smooth
conformal variations in Part~\textup{(i)}, and of
$\mathcal S_2^{\mathrm{prim}}$ under all smooth conformal variations
preserving the boundary area in Part~\textup{(ii)}. It does not
establish local minimality without the curvature constraint,
or existence or regularity of minimizers.
For $n\ge5$, suppose that the infimum of $\mathcal Q_{q_i}$ over
$\mathcal C_1$ is attained by a smooth positive member of
$\mathcal C_1$ for a sequence $q_i\uparrow p_*$ with
$2(n-4)/(n-2)<q_i<p_*$.
Each minimizer attaining such an infimum is classified by
Corollary~\ref{main:minima}\textup{(i)}.
Thus, for each fixed $v\in\mathcal C_1$,
\[
 \mathcal Q_{q_i}(v)\ge
 \frac{n-1}{3}|\mathbb S^{n-1}|^{1-\frac{2(n-4)}{q_i(n-2)}}.
\]
On the compact boundary, $v$ is bounded above and bounded away from zero.
Hence $v^{q_i}\to v^{p_*}$ uniformly, and
$\int_{\mathbb S^{n-1}}v^{q_i}\,\dd\sigma_\delta\to
\operatorname{Area}_{g_v}(\mathbb S^{n-1})$.
The exponents $2(n-4)/(q_i(n-2))$ tend to $(n-4)/(n-1)$.
Passing to the limit in the displayed inequality gives
Case--Wang's conjectured sharp trace inequality~\cite{CW20}:
\begin{equation}
 \label{var:sharp-trace-conditional}
 \mathcal S_2(g_v)\geq
 \frac{n-1}{3}|\mathbb S^{n-1}|^{3/(n-1)}
 \operatorname{Area}_{g_v}(\mathbb S^{n-1})^{(n-4)/(n-1)}.
\end{equation}
A metric attaining equality realizes the resulting lower bound for $\mathcal Q_{p_*}$,
so it is a relative global, and hence local, minimizer in $\mathcal C_1$.
Corollary~\ref{main:minima}\textup{(i)} therefore gives
$g_v=\lambda^2\Phi^*\delta$.
Conversely, these metrics have $R_{g_v}=0$ and $H_{g_v}=1/\lambda>0$,
so they belong to $\mathcal C_1$. Pullback and scaling invariance
of the critical quotient give equality for every such metric.
The smooth-attainment hypothesis is not proved here, so the full
conjecture remains conditional on this existence assumption.
\end{remark}

\section{Rigidity in a positive Einstein conformal class}
\label{clo:section}

Throughout this section, \((M^n,g_0)\) is closed and connected,
\(n\ge3\), and
\[
\Ric_{g_0}=(n-1)\kappa g_0,\qquad
\kappa>0,\qquad R_{g_0}=n(n-1)\kappa.
\]
We write \(g=u^2g_0\), where \(u\) is smooth and positive.
Unless a subscript is displayed, derivatives, tensor norms, and
integrals are taken with respect to \(g\).
The proofs first establish positivity from the curvature equation,
then choose a reference in \eqref{id:variable-formula} that makes
the integral nonnegative. The key algebraic relation is
\eqref{id:sigma}, which we recall here:
\begin{equation}\label{clo:scalar-decomposition}
\sigma_2(A_g)=\frac{R_g^2}{8n(n-1)}
             -\frac{|E_g|^2}{2(n-2)^2}.
\end{equation}
\begin{lemma}[Positivity of the scalar curvature]\label{clo:positive-branch}
At any global maximum point of \(u\) on \(M\), the tensor \(A_g\)
is positive definite. Consequently, if \(\sigma_2(A_g)>0\) everywhere,
then \(R_g>0\) everywhere.
\end{lemma}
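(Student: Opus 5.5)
We apply the conformal transformation law \eqref{id:conformal-schouten}, writing $g=u^2g_0=e^{2f}g_0$ with $f=\log u$ and all derivatives on the right taken in $g_0$. Since $g_0$ is Einstein with $\Ric_{g_0}=(n-1)\kappa g_0$, its Schouten tensor is
\[
A_{g_0}=\frac{1}{n-2}\Bigl((n-1)\kappa-\frac{n(n-1)\kappa}{2(n-1)}\Bigr)g_0=\frac{\kappa}{2}g_0,
\]
which is positive definite because $\kappa>0$.

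At a global maximum point $p$ of $u$, the function $f=\log u$ also attains its maximum. There $\dd f=0$, and $\nabla_{g_0}^2f\le0$ as a bilinear form. The gradient terms in \eqref{id:conformal-schouten} vanish at $p$, which leaves
\[
A_g(p)=\frac{\kappa}{2}g_0-\nabla_{g_0}^2f(p)\ge\frac{\kappa}{2}g_0>0 .
\]
Raising an index with the positive definite metric $g=u^2g_0$ preserves positivity, so all eigenvalues of $g^{-1}A_g$ are positive at $p$. This proves the first claim. In particular $\sigma_1(A_g)(p)>0$, so $R_g(p)>0$.

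For the consequence, suppose $\sigma_2(A_g)>0$ everywhere. By \eqref{clo:scalar-decomposition},
\[
R_g^2\ge 8n(n-1)\sigma_2(A_g)>0,
\]
so $R_g$ never vanishes on $M$. Since $M$ is connected and $R_g$ is continuous, $R_g$ has a constant sign, and that sign is positive because $R_g(p)>0$.

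There is no real obstacle here. The only point requiring care is the sign convention in \eqref{id:conformal-schouten}: the Hessian enters with a minus sign, so a maximum of $f$, not a minimum, gives the favorable inequality. Compactness guarantees that the maximum of $u$ exists.
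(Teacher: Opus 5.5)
Your proof is correct and follows the paper's argument: at a maximum of $u$ the conformal law gives $A_g=A_{g_0}-\nabla_{g_0}^2\log u\ge\frac{\kappa}{2}g_0>0$, and then $\sigma_2(A_g)>0$ keeps the trace from vanishing, so connectedness gives $R_g>0$. The only cosmetic difference is that you use the Schouten decomposition in the form $R_g^2\ge 8n(n-1)\sigma_2(A_g)$, whereas the paper uses the equivalent identity $2\sigma_2(A_g)=\sigma_1(A_g)^2-|A_g|_g^2$.
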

\begin{proof}
At a global maximum point of $u$, one has $\dd\log u=0$ and
$\nabla_{g_0}^2\log u\le0$. The conformal formula
\eqref{id:conformal-schouten} therefore gives
\[
A_g=A_{g_0}-\nabla_{g_0}^2\log u\ge\frac\kappa2g_0>0.
\]
In particular, \(\sigma_1(A_g)>0\) there. Since
\(2\sigma_2(A_g)=\sigma_1(A_g)^2-|A_g|_g^2>0\), the function \(\sigma_1(A_g)\)
does not vanish. Connectedness implies \(\sigma_1(A_g)>0\) on \(M\).
\end{proof}

We recall the classical Obata identity in the normalization
of~\cite{LiWei25}.

\begin{lemma}[Classical Obata identity; see~\cite{LiWei25}]\label{clo:pairing}
For every smooth positive \(u\),
\begin{equation}\label{clo:pairing-formula}
\int_M\langle\nabla R_g,\nabla u\rangle\,\dd v_g
=\frac{2n}{(n-2)^2}\int_Mu|E_g|^2\,\dd v_g.
\end{equation}
Consequently, if \(R_g\) is constant, then \(g\) is Einstein.
\end{lemma}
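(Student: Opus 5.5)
The plan is to integrate the divergence of the vector field $E_g\nabla u$, which is exactly the pairing of the trace-free Ricci tensor with the Hessian of $u$ that underlies Obata's argument. Since $M$ is closed, the integral of this divergence is zero. Everything then reduces to computing the divergence pointwise, using the identities recorded in Section~\ref{sec:identities}.

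\textbf{Step 1 (pointwise divergence).} I apply the product rule \eqref{id:tensor-product} with $T=E_g$ and $f=u$:
\[
 \operatorname{div}(E_g\nabla u)=(\nabla^iE_{ij})\nabla^ju+\langle E_g,\nabla^2u\rangle_g .
\]
The first term is handled by the contracted Bianchi identity \eqref{id:bianchi-tracefree}, which turns it into $\frac{n-2}{2n}\langle\nabla R_g,\nabla u\rangle$. For the second term, $E_g$ is trace-free, so only the trace-free part of the Hessian contributes. The Einstein-background identity \eqref{id:hess} then gives
\[
 \langle E_g,\nabla^2u\rangle=\langle E_g,(\nabla^2u)^\circ\rangle=-\frac{1}{n-2}u|E_g|^2 .
\]
Combining the two terms,
\[
 \operatorname{div}(E_g\nabla u)=\frac{n-2}{2n}\langle\nabla R_g,\nabla u\rangle-\frac1{n-2}u|E_g|^2 .
\]

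\textbf{Step 2 (integration).} I integrate this over the closed manifold $M$ with respect to $\dd v_g$. The divergence theorem \eqref{id:divergence-theorem} has no boundary term here, so the left-hand side vanishes. Multiplying the resulting equation by $2n/(n-2)$ gives exactly \eqref{clo:pairing-formula}.

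\textbf{Step 3 (consequence).} Suppose $R_g$ is constant. Then the left-hand side of \eqref{clo:pairing-formula} is zero. The integrand $u|E_g|^2$ on the right is continuous and nonnegative, and $u>0$, so $E_g\equiv0$. This means $g$ is Einstein.

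The only point needing care is the sign and normalization in \eqref{id:hess}, since these fix the constant $2n/(n-2)^2$. That identity is quoted from Li--Wei, so no real obstacle is expected. The argument is a direct two-line integration by parts.
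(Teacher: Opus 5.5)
Your argument is correct: integrating the pointwise identity $\operatorname{div}_g(E_g\nabla u)=\frac{n-2}{2n}\langle\nabla R_g,\nabla u\rangle_g-\frac{1}{n-2}u|E_g|^2$ over the closed manifold gives exactly the constant $2n/(n-2)^2$ and the Einstein consequence; the identity follows from \eqref{id:bianchi-tracefree} and \eqref{id:hess}. The paper does not write out a proof and instead quotes the identity from Li--Wei, whose standard derivation is this same pointwise Obata identity, so your route is essentially the paper's.
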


If $g$ is Einstein, Obata's conformal classification
\cite{Obata62,Obata71} implies that $u$ is constant unless $(M,g_0)$
is a round sphere up to scaling. In the spherical case the Einstein
metrics in the conformal class are
\begin{equation}\label{clo:obata-family}
 g=s^2\Phi^*g_0,\qquad s>0,
\end{equation}
where $\Phi$ is a conformal diffeomorphism. In particular, on a
nontrivial spherical space form the conformal factor is constant.

\subsection{Monotone prescribed \texorpdfstring{\(\sigma_2\)}{sigma-two} curvature}

For constant data, the locally conformally flat classification is due to
Viaclovsky~\cite{V00}, and the four-dimensional classification follows
from Gursky--Streets~\cite[Theorem~1.5]{GS18}.
These known cases are included as consequences below, rather than as
separate results. Theorem~\ref{main:monotone} treats nonincreasing data on a
general positive Einstein background. The possibility of an Obata
argument for constant data on such backgrounds was already discussed
by Chang--Gursky--Yang~\cite[Introduction]{CGYEntire03}.

\begin{proof}[Proof of Theorem~\ref{main:monotone}]
At a global maximum point of \(u\) on \(M\), Lemma~\ref{clo:positive-branch}
gives \(F(\max_Mu)>0\). Monotonicity implies
\(F(u)\ge F(\max_Mu)>0\) everywhere, and the same lemma
gives \(R_g>0\).
Set
\[
r(u)\coloneqq \sqrt{8n(n-1)F(u)}.
\]
This reference is positive and $C^1$ on the range of $u$, and
$r'(u)=4n(n-1)F'(u)/r(u)\le0$.
The Schouten decomposition gives
\[
R_g^2-r(u)^2=\frac{4n(n-1)}{(n-2)^2}|E_g|^2\ge0.
\]
Since both $R_g$ and $r(u)$ are positive, $R_g\ge r(u)$.
In \eqref{id:variable-formula}, the choice
$r(u)^2=8n(n-1)F(u)$ cancels the term
$u(r(u)^2/(4n)-2(n-1)\sigma_2(A_g))$.
Also $\nabla r=r'(u)\nabla u$. Integrating over the closed manifold
therefore gives
\[
0=\int_M\left\{
\frac{R_g-r(u)}{4n}
\left(n(n-1)u^{-1}|\nabla u|^2+ur(u)+u^{-1}R_{g_0}\right)
-\frac{n-1}{2n}r'(u)|\nabla u|^2
\right\}\dd v_g.
\]
Both summands are continuous and nonnegative. The coefficient
of $R_g-r(u)$ is strictly positive because $u$, $r(u)$, and
$R_{g_0}$ are positive. If $R_g-r(u)$ were positive at one point,
the integrand would be positive on a neighborhood of that point.
Thus $R_g=r(u)$ everywhere, and the displayed Schouten relation
gives $E_g=0$.
Equation~\eqref{id:bianchi-tracefree} and connectedness now imply
that $R_g$, hence $F(u)=R_g^2/(8n(n-1))$, is constant.
If $F$ is strictly decreasing on the range of $u$, it is injective
there, so $u$ is constant. If the background is not round, the
same conclusion follows from Obata's classification, without
strict monotonicity.
\end{proof}

\paragraph{\textbf{Constant data.}}
Taking \(F\equiv \Lambda\) gives \(\Lambda>0\), \(R_g>0\), and the
Einstein conclusion without an admissibility assumption.
If \(g_0\) is not round, Obata's theorem gives
\[
g=s^2g_0,\qquad
 s=\left(\frac{\sigma_2(A_{g_0})}{\Lambda}\right)^{1/4}.
\]
For a round background the metrics have the form
\eqref{clo:obata-family}, with the same scale.
Thus the constant-data consequence outside the classifications
cited above concerns \(n\ge5\) and backgrounds which are not
locally conformally flat. It is a special case of
Theorem~\ref{main:monotone}, not a separate rigidity theorem.

\paragraph{\textbf{Power data.}}
For \(p>0\) and \(\Lambda>0\), take the strictly decreasing function
\(F(s)\coloneqq \Lambda s^{-p}\). Every smooth positive solution of
\[
\sigma_2(A_g)=\Lambda u^{-p}
\]
has constant \(u\), and homogeneity gives
\begin{equation}\label{clo:power-scale}
u^{4-p}=\frac{\sigma_2(A_{g_0})}{\Lambda}.
\end{equation}
For \(p\ne4\) this determines the unique positive solution.
For \(p=4\), solutions exist precisely when
\(\Lambda=\sigma_2(A_{g_0})\), and then every positive constant is a
solution. In particular, Theorem~\ref{main:monotone} includes the
range $0<p<4$ without a local conformal flatness assumption.

\subsection{A constant curvature quotient}

The spherical classification follows from
Li--Li~\cite[Corollary~1.6]{LiLi03} once positivity is established;
the classification on spherical space forms follows by lifting.
We therefore prove the
classification on backgrounds which are not locally conformally
flat. The proof first obtains $R_g>0$ from the quotient equation,
then uses the reference $r=\min_MR_g$ to force constant scalar
curvature.

For $n\ge5$, recall the normalized integrals
\[
 \mathcal F_1(g)\coloneqq \Vol_g(M)^{-(n-2)/n}\int_M\sigma_1(A_g)\,\dd v_g,
 \qquad
 \mathcal F_2(g)\coloneqq \Vol_g(M)^{-(n-4)/n}\int_M\sigma_2(A_g)\,\dd v_g.
\]
Section~\ref{sharp:section} proves part~\textup{(ii)} by applying
part~\textup{(i)} to the minimizer of the Ge--Wang quotient~\cite{GW13}.
The full spherical comparison is already known~\cite[(2.1)]{FP24}.

\begin{proof}[Proof of Theorem~\ref{main:quotient}\textup{(i)}]
At a global maximum point of \(u\) on \(M\), Lemma~\ref{clo:positive-branch}
gives both \(\sigma_2(A_g)>0\) and \(\sigma_1(A_g)>0\), so \(\Lambda>0\).
The equation implies
\[
|A_g|^2=\sigma_1(A_g)(\sigma_1(A_g)-2\Lambda)\ge0.
\]
Thus \(\sigma_1(A_g)\) takes values in \((-\infty,0]\cup[2\Lambda,\infty)\).
Connectedness and the positive value of \(\sigma_1(A_g)\) at that
maximum point of \(u\) force \(\sigma_1(A_g)\ge2\Lambda>0\) everywhere.
Thus $R_g>0$ and $\sigma_2(A_g)=\Lambda\sigma_1(A_g)>0$;
in particular, the solution lies in $\Gamma_2^+$.
Set \(r\coloneqq \min_MR_g>0\), and choose \(x_0\in M\) at which the scalar
curvature \(R_g\) attains its global minimum, so \(R_g(x_0)=r\).
Since \(2(n-1)\sigma_2(A_g)=\Lambda R_g\), equation
\eqref{clo:scalar-decomposition} at \(x_0\) gives
\[
\frac r{4n}-\Lambda
=\frac{n-1}{(n-2)^2r}|E_g(x_0)|^2\ge0.
\]
For this constant reference, $\nabla r=0$.
Substitute $2(n-1)\sigma_2(A_g)=\Lambda R_g$ into
\eqref{id:variable-formula} and combine the scalar terms by
\[
\frac{ur(R_g-r)}{4n}
+u\left(\frac{r^2}{4n}-\Lambda R_g\right)
=uR_g\left(\frac r{4n}-\Lambda\right).
\]
The integral of the divergence is zero because $M$ is closed,
so the resulting identity is
\[
0=\int_M\left\{
\frac{R_g-r}{4n}
\left(n(n-1)u^{-1}|\nabla u|^2+u^{-1}R_{g_0}\right)
+uR_g\left(\frac r{4n}-\Lambda\right)
\right\}\dd v_g.
\]
Both summands are nonnegative by $R_g\ge r>0$ and the
inequality at $x_0$. In the first summand the coefficient of
$R_g-r$ is strictly positive, since $R_{g_0}>0$ and $u>0$.
Continuity therefore forces $R_g\equiv r$.
Lemma~\ref{clo:pairing} gives $\int_Mu|E_g|^2\,\dd v_g=0$;
as $u>0$, this implies $E_g=0$ everywhere.
Substitution in \eqref{clo:scalar-decomposition} proves
$\Lambda=R_g/(4n)>0$.
Obata's theorem gives $g=s^2g_0$, since a background which is not
locally conformally flat cannot be round. Finally,
$R_g=s^{-2}R_{g_0}$ gives $s^2=R_{g_0}/(4n\Lambda)$,
as asserted.
\end{proof}

\section{Sharp comparison in a positive Einstein conformal class}
\label{sharp:section}

Throughout this section, $(M^n,g_0)$ is closed and connected,
$n\ge5$, and $g_0$ is Einstein with scalar curvature $R_{g_0}>0$.
All conformal metrics below are smooth. We write
\[
 \sigma_1(A_g)\coloneqq \tr_g A_g=\frac{R_g}{2(n-1)},\qquad
 \mathcal C_k\coloneqq \{g\in[g_0]:\lambda(g^{-1}A_g)\in\Gamma_k^+
                       \text{ on }M\}.
\]
Here $\lambda(g^{-1}A_g)$ is the vector of Schouten eigenvalues,
and $\mathcal C_k$ is the class of smooth conformal metrics satisfying
the open cone condition everywhere. In particular,
$\mathcal C_1\coloneqq \{g\in[g_0]:R_g>0\}$ is the closed-manifold class used
in this section. It differs from the ball class in
Subsection~\ref{var:section}, which includes a boundary condition.
Set
\begin{equation}
\begin{gathered}
 \mathcal F_2(g)\coloneqq
 \frac{\displaystyle\int_M\sigma_2(A_g)\,\dd v_g}
      {\Vol_g(M)^{(n-4)/n}},\qquad
 \mathcal F_1(g)\coloneqq
 \frac{\displaystyle\int_M\sigma_1(A_g)\,\dd v_g}{\Vol_g(M)^{(n-2)/n}},\\
 \mathcal Q_{2,1}(g)\coloneqq
 \frac{\displaystyle\int_M\sigma_2(A_g)\,\dd v_g}
      {\left(\displaystyle\int_M\sigma_1(A_g)\,\dd v_g\right)^{(n-4)/(n-2)}}.
\end{gathered}
 \label{sharp:functionals}
\end{equation}
The functionals $\mathcal F_1$ and $\mathcal F_2$ are the
volume-normalized total Schouten trace and total $\sigma_2$ curvature;
$\mathcal Q_{2,1}$ is their scale-invariant quotient. The last
functional is defined on $\mathcal C_1$, where $\int_M\sigma_1(A_g)\,\dd v_g>0$.
The usual Yamabe functional is $2(n-1)\mathcal F_1$.

On the round sphere, every $g\in\mathcal C_1$ satisfies the known comparison
\begin{equation}
 \frac{\mathcal F_2(g)}{\mathcal F_2(g_0)}
 \ge
 \left(\frac{\mathcal F_1(g)}{\mathcal F_1(g_0)}\right)^{\frac{n-4}{n-2}}
 \ge1.
 \label{sharp:spherical-known}
\end{equation}
This is recorded in~\cite[(2.1)]{FP24}, using the quotient inequality
of Guan--Wang~\cite[Theorem~1(A)]{GuanWang04} and the equality of
the quotient infima over $\mathcal C_2$ and $\mathcal C_1$ proved by
Ge--Wang~\cite[Theorem~1]{GW13}.
It also holds on every spherical space form: for a round covering
$\pi:\mathbb S^n\to M$ of degree $d$,
$\mathcal F_k(\pi^*g)=d^{2k/n}\mathcal F_k(g)$ for $k=1,2$,
so the ratios are unchanged by lifting both $g$ and $g_0$.
Equality in either comparison holds exactly when $g$ is Einstein.
On a nontrivial spherical space form, Obata's theorem~\cite{Obata62}
then gives $g=s^2g_0$; on the sphere the equality metrics are the
rescalings and conformal pullbacks of $g_0$.

We use the following existence result of
Ge--Wang \cite[Proposition~1 and Theorem~1]{GW13}: if $n>4$ and
$\mathcal C_2\ne\varnothing$, then
\begin{equation}
 0<\inf_{\mathcal C_1}\mathcal Q_{2,1}
   =\inf_{\mathcal C_2}\mathcal Q_{2,1}<\infty,
 \label{sharp:GW}
\end{equation}
and the infimum is attained by a conformal metric in $\mathcal C_2$.
This result does not require local conformal flatness.
Their Proposition~1 supplies the required positivity, and evaluation
at a metric in $\mathcal C_2$ gives finiteness. The $C^{2,\gamma}$
minimizer constructed in~\cite[Section~5]{GW13} is smooth by elliptic
regularity: its constant quotient equation is uniformly elliptic
along this admissible metric on the compact manifold.

We also recall the quotient Euler--Lagrange equation. It follows from
the first variations in~\cite[Section~2, Lemma~1 and (2.9)]{GW13}.

\begin{lemma}[Quotient Euler--Lagrange equation, Ge--Wang \cite{GW13}]
\label{sharp:variation}
Every smooth minimizer of $\mathcal Q_{2,1}$ in $\mathcal C_1$ satisfies
\begin{equation}
 \sigma_2(A_g)=
 \frac{\displaystyle\int_M\sigma_2(A_g)\,\dd v_g}
      {\displaystyle\int_M\sigma_1(A_g)\,\dd v_g}\sigma_1(A_g).
 \label{sharp:quotientEL}
\end{equation}
\end{lemma}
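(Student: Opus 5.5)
The plan is to compute the first variation of $\mathcal Q_{2,1}$ along conformal curves $g_t=e^{2t\varphi}g$ with $\varphi\in C^\infty(M)$ arbitrary, and then use minimality to make the derivative vanish. Since $\mathcal C_1=\{R_g>0\}$ is open in the $C^2$ topology, $g_t\in\mathcal C_1$ for both signs of $|t|$ small. Hence no one-sided constraint arises, and minimality gives $\frac{\dd}{\dd t}\big|_{t=0}\mathcal Q_{2,1}(g_t)=0$. In particular $\int_M\sigma_1(A_g)\,\dd v_g>0$, so the quotient is differentiable at $g$.

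The two first-variation formulas I will use are the standard ones from \cite[Section~2, Lemma~1]{GW13}:
\[
 \frac{\dd}{\dd t}\Big|_{0}\int_M\sigma_k(A_{g_t})\,\dd v_{g_t}
 =(n-2k)\int_M\varphi\,\sigma_k(A_g)\,\dd v_g,\qquad k=1,2.
\]
I would justify these by recalling two facts. First, by \eqref{id:conformal-schouten}, $\dot A=-\nabla^2\varphi$. Second, $\dot\sigma_k=-2\varphi\sigma_k+\langle T_{k-1}(A_g),-\nabla^2\varphi\rangle$, while $\dot{\dd v}=n\varphi\,\dd v$. The Hessian term integrates to zero because $T_0=g$ and $T_1(A_g)$ are divergence free; see \eqref{id:newton}. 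This is exactly the point where no local conformal flatness is needed: for $k=2$ only $T_1$ appears, and its divergence vanishes for every metric.

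Then I differentiate the quotient. Write $S_k=\int_M\sigma_k(A_g)\,\dd v_g$ and $\alpha=(n-4)/(n-2)$. The vanishing of the derivative of $\log\mathcal Q_{2,1}$ reads
\[
 \frac{(n-4)\int_M\varphi\sigma_2\,\dd v_g}{S_2}
 -\alpha\,\frac{(n-2)\int_M\varphi\sigma_1\,\dd v_g}{S_1}=0.
\]
This step assumes $S_2\neq0$. Alternatively, I would differentiate $S_2S_1^{-\alpha}$ directly, which avoids dividing by $S_2$. The result is $(n-4)\int_M\varphi\bigl(\sigma_2-\frac{S_2}{S_1}\sigma_1\bigr)\dd v_g=0$ for every $\varphi$. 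Since $n\ge5$, the factor $n-4\ne0$. The fundamental lemma then gives \eqref{sharp:quotientEL} pointwise, using smoothness of $g$.

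The only delicate point is the first-variation formula for $\sigma_2$ on a non-locally-conformally-flat background. Done carelessly, this computation would seem to need $\operatorname{div}T_2=0$, which does hold only under local conformal flatness. The resolution is that the variation involves $T_1$, not $T_2$, and $\operatorname{div}T_1(A_g)=0$ holds in general. I would also check that $-2\varphi\sigma_k+n\varphi\sigma_k=(n-2k)\varphi\sigma_k$, which follows since $\sigma_k$ is homogeneous of degree $k$ under the factor $e^{-2t\varphi}$ from raising an index.
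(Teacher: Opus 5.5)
Your proposal is correct and is essentially the paper's argument. The conformal paths $e^{2t\varphi}g$ stay in $\mathcal C_1$ for both signs of small $t$, so the minimizer is stationary, and the Ge--Wang first-variation formulas give \eqref{sharp:quotientEL} after you differentiate $S_2S_1^{-(n-4)/(n-2)}$ and use $n-4\neq0$; these formulas involve only the divergence-free tensors $g$ and $T_1(A_g)$, so they need no local conformal flatness.

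One slip to fix: the pointwise variation is $\dot\sigma_k=-2k\varphi\sigma_k-\langle T_{k-1}(A_g),\nabla^2\varphi\rangle$, which is the degree-$k$ homogeneity you invoke. As you wrote it, $-2\varphi\sigma_k+n\varphi\sigma_k$ would give $(n-2)$ rather than $(n-2k)$. Your final formula $(n-2k)\int_M\varphi\sigma_k\,\dd v_g$ is nonetheless correct.
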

These first-variation formulas hold on general closed manifolds.
For each fixed smooth $\varphi$, the path $g_t=e^{2t\varphi}g$
remains in $\mathcal C_1$ for both signs of sufficiently small $t$:
$R_g$ has a positive minimum on the compact manifold, and scalar
curvature depends continuously on the metric in $C^2$.
The minimizer is therefore stationary under every such variation,
so the cited first-variation formulas yield the stated equation.
No local conformal flatness assumption is needed.

The remaining step is to identify the Ge--Wang minimizer on an
Einstein background which is not locally conformally flat. This is
precisely where Theorem~\ref{main:quotient}\textup{(i)} enters.

\begin{proof}[Proof of Theorem~\ref{main:quotient}\textup{(ii)}]
The background satisfies $A_{g_0}=R_{g_0}g_0/(2n(n-1))$, so
$g_0\in\mathcal C_2$. By \eqref{sharp:GW} there is a smooth minimizing
metric $\widehat g\in\mathcal C_2$. Lemma~\ref{sharp:variation} gives
$\sigma_2(A_{\widehat g})=\Lambda\sigma_1(A_{\widehat g})$, where
\[
 \Lambda\coloneqq \frac{\displaystyle\int_M\sigma_2(A_{\widehat g})\,\dd v_{\widehat g}}
         {\displaystyle\int_M\sigma_1(A_{\widehat g})\,\dd v_{\widehat g}}>0.
\]
The background in Theorem~\ref{main:quotient} is not locally
conformally flat, so part~\textup{(i)} applies to $\widehat g$ and
gives $\widehat g=s^2g_0$. Since $\mathcal Q_{2,1}$ is invariant
under constant rescaling, every $g\in\mathcal C_1$ satisfies
\[
 \mathcal Q_{2,1}(g)\ge\mathcal Q_{2,1}(\widehat g)
 =\mathcal Q_{2,1}(g_0).
\]
All factors below are positive: $\mathcal F_1(g)>0$ by
$g\in\mathcal C_1$, and $\mathcal F_2(g_0)>0$ because $g_0$ is
positive Einstein. Thus, using
$\mathcal Q_{2,1}=\mathcal F_2/\mathcal F_1^{(n-4)/(n-2)}$,
we may rearrange the quotient inequality as
\[
\frac{\mathcal F_2(g)}{\mathcal F_2(g_0)}
\ge\left(\frac{\mathcal F_1(g)}{\mathcal F_1(g_0)}\right)^{\frac{n-4}{n-2}}.
\]
This is the first comparison in \eqref{sharp:strong}.

For the second comparison, the Yamabe existence theorem gives
a smooth conformal minimizer with constant scalar
curvature~\cite{LeeParker87}. Lemma~\ref{clo:pairing} shows that this
metric is Einstein, and Obata's classification gives a constant
rescaling of $g_0$~\cite{Obata62,Obata71}.
By scale invariance, $g_0$ has the same Yamabe energy as that
minimizer. Hence $\mathcal F_1(g)\ge\mathcal F_1(g_0)$.
Finally,
\[
 \mathcal F_1(g_0)=\frac{R_{g_0}}{2(n-1)}\Vol_{g_0}(M)^{2/n},\qquad
 \mathcal F_2(g_0)=\frac{R_{g_0}^2}{8n(n-1)}\Vol_{g_0}(M)^{4/n},
\]
which proves \eqref{sharp:pure}.

If the first comparison is an equality, then
$\mathcal Q_{2,1}(g)=\inf_{\mathcal C_1}\mathcal Q_{2,1}$.
Lemma~\ref{sharp:variation} and part~\textup{(i)} therefore give
$g=s^2g_0$. Equality in the second comparison makes $g$ a Yamabe
minimizer, so the preceding argument gives the same conclusion.
If equality holds in \eqref{sharp:pure}, the leftmost and rightmost quantities in
\eqref{sharp:strong} are both $1$; the intermediate quantity must
also equal $1$. Thus equality in the pure comparison is covered
by either case above. Conversely, every $s^2g_0$ gives equality
by scale invariance.
\end{proof}

\begin{remark}[The existence theorems used here]
\label{sharp:scope}
Ge--Wang--Wei \cite[Theorem~1.1]{GWW26} also prove that
$\inf_{\mathcal C_1}\mathcal F_2$ is attained by a metric in
$\mathcal C_2$ when the infimum is positive, for $n\ge5$.
They also show that $\mathcal C_2\ne\varnothing$ implies the required
positivity. Together with constant-$\sigma_2$ rigidity, this gives
another proof of \eqref{sharp:pure}.
The quotient theorem of Ge--Wang \cite{GW13} also gives
\eqref{sharp:strong}, which we will use below.
Both existence theorems concern metrics with positive scalar curvature;
they do not minimize over the unrestricted conformal class.
The restriction $n>4$ is essential here: in dimension four the total
$\sigma_2$ curvature is conformally invariant.
\end{remark}

The following Weyl comparison is a direct consequence of
Theorem~\ref{main:quotient}\textup{(ii)} and the H\"older estimate in
\cite[Lemma~5.3]{Case24}.
It adds a strictly negative multiple of the squared Weyl norm when
that norm is a positive constant on the background. The case of a
zero Weyl term is already the pure comparison above or
\eqref{sharp:spherical-known}.

\medskip
\noindent\textbf{A consequence with a Weyl term.}
Assume additionally that $|W_{g_0}|_{g_0}^2>0$ is constant, and let
$b<0$. Then every $g\in\mathcal C_1$ satisfies
\begin{equation}
 \Vol_g(M)^{-(n-4)/n}
 \int_M\bigl(\sigma_2(A_g)+b|W_g|_g^2\bigr)\,\dd v_g
 \ge \left(\frac{R_{g_0}^2}{8n(n-1)}+b|W_{g_0}|_{g_0}^2\right)\Vol_{g_0}(M)^{4/n}.
 \label{sharp:weylcomparison}
\end{equation}
Equality holds exactly when $g=s^2g_0$ for some $s>0$.
\begin{proof}
Case's Weyl comparison~\cite[Lemma~5.3]{Case24}, written in
scale-invariant form, gives
\[
 \Vol_g(M)^{-(n-4)/n}\int_M|W_g|_g^2\,\dd v_g
 \le |W_{g_0}|_{g_0}^2\Vol_{g_0}(M)^{4/n}.
\]
Here $n>4$ and $|W_{g_0}|_{g_0}^2>0$, so the equality
condition in the cited comparison is precisely $g=s^2g_0$.
The same positivity ensures that the background is not locally
conformally flat, so \eqref{sharp:pure} applies. Multiply the displayed
inequality by $b<0$ and add \eqref{sharp:pure}. This proves
\eqref{sharp:weylcomparison}, even if its right-hand side is
nonpositive. Equality in the sum requires equality in both
comparisons, and hence $g=s^2g_0$; these metrics also attain equality.
\end{proof}

\section{Global stability on a fixed nonround Einstein manifold}
\label{stab:section}

Let $(M^n,g_0)$ be closed and connected, with $n\ge5$ and
\[
 \Ric_{g_0}=(n-1)\kappa g_0,\qquad \kappa>0,
 \qquad\Vol_{g_0}(M)=1,
\]
and assume that $(M,g_0)$ is not isometric, up to scaling, to the round
sphere. All gradients, Laplacians, integrals, and Sobolev norms in this
section are relative to $g_0$, unless another metric is indicated. Put
\begin{equation}
 g=z^{8/(n-4)}g_0,\qquad z>0,\qquad
 \int_Mz^{4n/(n-4)}\,\dd v_{g_0}=1.
 \label{stab:normalization}
\end{equation}
Thus $g=u^2g_0$ with $u\coloneqq z^{4/(n-4)}$, and $\Vol_g(M)=1$.
The normalized Einstein energy is
\[
 \mathcal F_2(g_0)=\sigma_2(A_{g_0})=\frac{n(n-1)\kappa^2}{8}.
\]
The positive function $z$ is the fourth-order conformal factor in
\eqref{stab:normalization}. For an arbitrary real function $h$,
fix the norms
\[
 \|h\|_{H^1}^2\coloneqq \int_M(h^2+|\nabla h|^2)\,\dd v_{g_0},
 \qquad
 \|h\|_{W^{1,4}}^4\coloneqq \int_M(|h|^4+|\nabla h|^4)\,\dd v_{g_0}.
\]

Frank--Peteranderl~\cite[Theorem~1]{FP24} prove quantitative
$\sigma_2$ stability on the round sphere, with distance measured
modulo scaling and M\"obius transformations.
Theorem~\ref{main:stability} gives the corresponding estimate on each fixed
nonround positive Einstein background, where the unique normalized
minimizing factor is $1$ and Obata's theorem gives a strict spectral gap.
The proof has two inputs: Yamabe stability on the fixed background
and a positive decomposition of the $\sigma_2$ energy. The quotient
comparison transfers a small $\sigma_2$ deficit to a small Yamabe
deficit; the energy decomposition then controls the fourth power of
the gradient. This use of the Yamabe deficit also appears in the sphere
argument of~\cite[Section~2]{FP24}. We first record the two inputs,
with the hypotheses needed here.

The following estimate is the nondegenerate case of the Yamabe
stability theory in~\cite[Proposition~3.1 and Lemma~4.1]{ENS22},
specialized to a nonround positive Einstein metric. We check the
nondegeneracy assumption in this setting.

\begin{lemma}[Yamabe stability, Engelstein--Neumayer--Spolaor \cite{ENS22}]
\label{stab:yamabe}
Let
\[
 \mathcal Y(w)\coloneqq \int_M\left(
       \frac{4(n-1)}{n-2}|\nabla w|^2+R_{g_0}w^2\right)\dd v_{g_0},
 \qquad R_{g_0}=n(n-1)\kappa.
\]
The function $w$ is the Yamabe conformal factor in
$g=w^{4/(n-2)}g_0$; under unit volume, $\mathcal Y(w)$ is the total
scalar curvature of this metric. There are an energy threshold
$\delta_Y>0$ and a stability constant $C_Y<\infty$ such that, whenever $w>0$ is
smooth, $\int_Mw^{2n/(n-2)}\,\dd v_{g_0}=1$, and
$\mathcal Y(w)-R_{g_0}\le\delta_Y$, one has
\begin{equation}
 \|w-1\|_{H^1}^2\le C_Y\bigl(\mathcal Y(w)-R_{g_0}\bigr).
 \label{stab:yamabeestimate}
\end{equation}
\end{lemma}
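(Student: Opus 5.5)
The plan is to derive the estimate from the general stability theory of Engelstein--Neumayer--Spolaor~\cite[Proposition~3.1 and Lemma~4.1]{ENS22}. Its input is a nondegenerate, isolated Yamabe minimizer. So the main work is to check that $w\equiv1$ is such a minimizer on our fixed nonround positive Einstein background.

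\emph{Step 1: $w\equiv1$ is the unique normalized minimizer.} By the argument already used in the proof of Theorem~\ref{main:quotient}\textup{(ii)}, any Yamabe minimizer has constant scalar curvature. Lemma~\ref{clo:pairing} then makes it Einstein, and Obata's classification~\cite{Obata62,Obata71} makes it a constant rescaling of $g_0$, since $(M,g_0)$ is not round up to scaling. The normalization $\int_M w^{2n/(n-2)}\,\dd v_{g_0}=1=\Vol_{g_0}(M)$ then forces $w\equiv1$. The Yamabe constant equals $\mathcal Y(1)=R_{g_0}$.

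\emph{Step 2: nondegeneracy of the second variation.} Expand $\mathcal Y$ at $w=1$ under the volume constraint. Write $w=1+\varphi$ with $\int_M\varphi\,\dd v_{g_0}=0$ to first order. The quadratic form is proportional to
\[
 \int_M\Bigl(\frac{4(n-1)}{n-2}|\nabla\varphi|^2-\frac{4}{n-2}R_{g_0}\varphi^2\Bigr)\dd v_{g_0},
\]
which is positive definite on mean-zero functions exactly when the first nonzero eigenvalue satisfies $\lambda_1(-\Delta_{g_0})>R_{g_0}/(n-1)=n\kappa$. The Lichnerowicz bound gives $\lambda_1\ge n\kappa$. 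Obata's theorem says equality holds only for the round sphere, so on our background $\lambda_1>n\kappa$ strictly. This is the strict spectral gap, and it gives the nondegeneracy hypothesis in~\cite{ENS22}. The kernel of the linearized Yamabe operator is then trivial, so the local quadratic estimate near $w=1$ holds: $\|w-1\|_{H^1}^2\le C\,(\mathcal Y(w)-R_{g_0})$ for $\|w-1\|_{H^1}$ small. I expect this to come from \cite[Proposition~3.1]{ENS22}, or from a direct Taylor expansion using the Sobolev embedding $H^1\subset L^{2n/(n-2)}$ to control the cubic remainder.

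\emph{Step 3: from small deficit to small distance.} I expect this compactness step to be the main obstacle. Take normalized $w_k>0$ with $\mathcal Y(w_k)\to R_{g_0}$. These form a minimizing sequence for the Yamabe problem. Because the Yamabe constant is strictly below the sphere value $Y(\mathbb S^n)$ (Aubin's inequality), such sequences are precompact in $H^1$ up to subsequences. The limits are normalized minimizers, which by Step~1 equal $1$; positivity is preserved in the limit, or one replaces $w_k$ by $|w_k|$. Thus $\|w_k-1\|_{H^1}\to0$. This is the content of \cite[Lemma~4.1]{ENS22}. It uses that $(M,g_0)$ is not conformally equivalent to the round sphere: the only way to rule out bubbling, and for strict inequality one needs the positive mass theorem or Weyl nonvanishing in general. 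Here I would instead cite the standard result that the Yamabe constant of a non-conformally-round closed manifold lies strictly below that of the sphere, which is part of the Yamabe resolution in~\cite{LeeParker87}.

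Once the first three steps are in place, a contradiction argument yields $\delta_Y$. If no threshold worked, one would get normalized $w_k$ whose deficits tend to zero while the ratio $\|w_k-1\|_{H^1}^2/(\mathcal Y(w_k)-R_{g_0})$ blows up. Step~3 forces $w_k\to1$ in $H^1$, and then Step~2 gives a uniform bound on that ratio, a contradiction. This yields \eqref{stab:yamabeestimate} with some $C_Y<\infty$.
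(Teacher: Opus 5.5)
Your proposal is correct and follows essentially the paper's route: Obata identifies $1$ as the unique normalized minimizer, and Obata's equality case in the Lichnerowicz bound gives $\lambda_1(-\Delta_{g_0})>n\kappa$, hence a nondegenerate constrained Hessian. The local estimate comes from \cite[Proposition~3.1]{ENS22}, and a contradiction argument using compactness of minimizing sequences (\cite[Lemma~4.1]{ENS22}, which you justify through $Y(M,[g_0])<Y(\mathbb S^n)$ from \cite{LeeParker87}) produces the energy threshold; if you take the ENS22 route rather than the direct Taylor expansion, note as the paper does that its distance is normalized by $\|w\|_{H^1}$, which is bounded once the deficit is at most $1$.
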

\begin{proof}
The Yamabe existence theorem gives a smooth positive normalized
minimizer, and Obata's theorem identifies every such minimizer with
$1$ in this nonround Einstein conformal class~\cite{LeeParker87,Obata62,Obata71}.
Thus the normalized minimum is $\mathcal Y(1)=R_{g_0}$.
An Einstein metric conformally equivalent to the round sphere is
itself round up to scaling and isometry, so that alternative is
excluded by our hypothesis. Obata's equality case in the Lichnerowicz
estimate therefore gives $\lambda_1(-\Delta_{g_0})>n\kappa$.

We use this strict spectral gap to check nondegeneracy. Differentiating
the unit-volume constraint at $w=1$ shows that its tangent space
consists of the mean-zero functions. For a smooth function $h$ in
this space, set
$w_t\coloneqq(1+th)/\|1+th\|_{2n/(n-2)}$.
The constrained second variation in~\cite[Lemma~2.1, (11)]{ENS22},
evaluated at $1$ with $R_{g_0}=n(n-1)\kappa$, is
\begin{equation}\label{stab:yamabe-hessian}
 \left.\frac{\dd^2}{\dd t^2}\right|_0\mathcal Y(w_t)
 =\frac{8(n-1)}{n-2}\int_M\bigl(|\nabla h|^2-n\kappa h^2\bigr)\,\dd v_{g_0}.
\end{equation}
The Rayleigh inequality gives
\[
 \int_M\bigl(|\nabla h|^2-n\kappa h^2\bigr)\,\dd v_{g_0}
 \ge
 \frac{\lambda_1(-\Delta_{g_0})-n\kappa}
      {\lambda_1(-\Delta_{g_0})+1}\|h\|_{H^1}^2.
\]
Thus the constrained Hessian has trivial kernel.

Proposition~3.1 of~\cite{ENS22} now applies with exponent $2$.
To identify its distance precisely, let $\mathcal M_1$ denote the set
of positive normalized Yamabe minimizers and let $B_\rho(1)$ be the
$H^1$ ball supplied by that proposition. Here $\mathcal M_1=\{1\}$,
so its local normalized distance is
\[
 d_\rho(w,\mathcal M_1)
 =\frac{\inf\{\|w-v\|_{H^1}:v\in\mathcal M_1\cap B_\rho(1)\}}
        {\|w\|_{H^1}}
 =\frac{\|w-1\|_{H^1}}{\|w\|_{H^1}}.
\]
Consequently, every normalized $w\in B_\rho(1)$ satisfies
\[
 \mathcal Y(w)-R_{g_0}
 \ge c\,\frac{\|w-1\|_{H^1}^2}{\|w\|_{H^1}^2}.
\]
When $\mathcal Y(w)-R_{g_0}\le1$, the denominator is bounded by
\[
 \min\left\{\frac{4(n-1)}{n-2},R_{g_0}\right\}
 \|w\|_{H^1}^2\le\mathcal Y(w)\le R_{g_0}+1.
\]
Consequently, the local estimate has the form
\eqref{stab:yamabeestimate}.

It remains to replace the neighborhood condition by an energy
condition. Since the conformal class is not that of the round
sphere, the compactness result in~\cite[Lemma~4.1]{ENS22} applies
to its normalized minimizing sequences. If no positive energy
threshold forced membership in $B_\rho(1)$, we could choose positive
normalized $w_j$ with
\[
 0\le\mathcal Y(w_j)-R_{g_0}\le j^{-1},\qquad
 \|w_j-1\|_{H^1}\ge\rho.
\]
A subsequence would converge strongly in $H^1$ to an element of
$\mathcal M_1=\{1\}$, contradicting the second inequality.
Choose the resulting threshold $\delta_Y\le1$ and apply the local
estimate above. This proves \eqref{stab:yamabeestimate}.
\end{proof}

The second input is Case's conformal energy formula
\cite[Lemma~2.1 and Remark~2.2]{Case21}, specialized to an Einstein
background. Remark~2.2 makes the formula available without local
conformal flatness. The sphere version appears in~\cite[(1.5)]{FP24}.
We record the Einstein specialization in the form needed below.

\begin{lemma}[Energy identity, Case \cite{Case21}]
\label{stab:energy}
Let $z$ be smooth and positive, and let $g=z^{8/(n-4)}g_0$.
No volume normalization is needed for the following formulas:
\begin{equation}
 z^{2n/(n-4)}\sigma_1(A_g)
 =-\frac{2}{n-4}\Delta_{g_0}(z^2)
   -\frac{16}{(n-4)^2}|\nabla z|^2+\frac{n\kappa}{2}z^2.
 \label{stab:K}
\end{equation}
Moreover,
\begin{equation}
\begin{split}
 \int_M\sigma_2(A_g)\,\dd v_g
 =\int_M\biggl[&
 \sigma_2(A_{g_0})z^4
 +\frac{4}{n-4}z^{2n/(n-4)}\sigma_1(A_g)|\nabla z|^2\\
 &+\frac{32}{(n-4)^3}|\nabla z|^4
 +\frac{2(n-2)\kappa}{n-4}z^2|\nabla z|^2\biggr]\dd v_{g_0}.
\end{split}
 \label{stab:positiveenergy}
\end{equation}
If $R_g>0$, all displayed terms on the right-hand side are nonnegative.
\end{lemma}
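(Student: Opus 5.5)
The plan is a direct computation from the conformal transformation laws of Section~\ref{sec:identities}, cross-checked against Case's general formula~\cite[Lemma~2.1 and Remark~2.2]{Case21}. Throughout, write $g=e^{2f}g_0$ with $f=\frac4{n-4}\log z$, so that $e^{-2f}=z^{-8/(n-4)}$ and $\dd v_g=z^{4n/(n-4)}\dd v_{g_0}$. On the Einstein background, $A_{g_0}=\frac\kappa2g_0$, $\sigma_1(A_{g_0})=\frac{n\kappa}2$ and $\sigma_2(A_{g_0})=\frac{n(n-1)\kappa^2}8$.

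\emph{Step 1: formula \eqref{stab:K}.} This is pure algebra from \eqref{id:conformal-trace}. Multiplying that law by $z^{2n/(n-4)}$ leaves the overall factor $z^{2n/(n-4)-8/(n-4)}=z^2$. Next,
\[
 z^2\Delta f=\frac4{n-4}\bigl(z\Delta z-|\nabla z|^2\bigr),\qquad
 z\Delta z=\tfrac12\Delta(z^2)-|\nabla z|^2,
\]
and $\frac{n-2}2z^2|\nabla f|^2=\frac{8(n-2)}{(n-4)^2}|\nabla z|^2$. The coefficients of $|\nabla z|^2$ then combine as $\frac{8}{n-4}-\frac{8(n-2)}{(n-4)^2}=-\frac{16}{(n-4)^2}$, which gives \eqref{stab:K}.

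\emph{Step 2: reduction of the total $\sigma_2$-curvature.} By \eqref{id:conformal-schouten}, $A_g=\frac\kappa2g_0+T$ with $T\coloneqq-\nabla^2f+\dd f\otimes\dd f-\frac12|\nabla f|^2g_0$. Since the eigenvalues with respect to $g$ carry the factor $e^{-2f}$, we get $\sigma_2(A_g)\,\dd v_g=z^4\sigma_2^{g_0}\bigl(\tfrac\kappa2g_0+T\bigr)\dd v_{g_0}$. Expanding,
\[
 \sigma_2^{g_0}\bigl(\tfrac\kappa2 g_0+T\bigr)=\sigma_2(A_{g_0})+\tfrac{(n-1)\kappa}{2}\tr_{g_0}T+\sigma_2^{g_0}(T).
\]
The first term produces $\sigma_2(A_{g_0})z^4$ directly. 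Rewriting everything in $z$ (with $z^4\nabla f=\frac4{n-4}z^3\nabla z$), the term $z^4\sigma_2(T)$ contains $\frac12\bigl((\tr\nabla^2)^2-|\nabla^2|^2\bigr)$-type expressions in $z$ together with cubic and quartic gradient terms.

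\emph{Step 3: integration by parts.} The second-order terms are removed using the divergence-free Newton structure $\nabla^i\bigl(\Delta w\,\delta_{ij}-\nabla_{ij}w\bigr)=-\Ric_{g_0}(\nabla w,\cdot)_j$. For Einstein $g_0$ this equals $-(n-1)\kappa\nabla_jw$, so the only curvature contribution is a multiple of $\kappa z^2|\nabla z|^2$; no trace-free Ricci or Weyl term appears. This is exactly the Einstein specialization of Case's formula, in which the term pairing $A_{g_0}$ with $\nabla z\otimes\nabla z$ collapses to a scalar multiple. After the integrations by parts, the integrand should reduce to second-order terms of the form $z^2\Delta(z^2)|\nabla z|^2$ plus quartic and $\kappa$-weighted gradient terms. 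Grouping $-\frac2{n-4}\Delta(z^2)-\frac{16}{(n-4)^2}|\nabla z|^2+\frac{n\kappa}2z^2$ via \eqref{stab:K} turns this group into $\frac4{n-4}z^{2n/(n-4)}\sigma_1(A_g)|\nabla z|^2$. The leftover coefficients should be $\frac{32}{(n-4)^3}$ for $|\nabla z|^4$ and $\frac{2(n-2)\kappa}{n-4}$ for $z^2|\nabla z|^2$. As a consistency check, I would compare with the round-sphere case~\cite[(1.5)]{FP24}.

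\emph{Step 4: signs.} Positivity is then immediate: $\sigma_2(A_{g_0})>0$, $\kappa>0$, $n>4$, and $R_g>0$ gives $\sigma_1(A_g)>0$.

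The main obstacle is Step~3: the weighted integration by parts of $z^4\sigma_2(T)$. All Hessian terms must cancel or be absorbed into the $\sigma_1(A_g)$ group, and the cubic terms in $\nabla z$ must combine with the correct coefficients. I would first verify the identity against Case's general formula with Einstein substitution, and only fall back on the full expansion if the normalizations there do not match.
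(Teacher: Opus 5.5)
Your proposal is correct and reaches the paper's formula by a different route in the middle step. Your Step~1 is the paper's derivation of \eqref{stab:K}, and your final grouping is the paper's last step: solve \eqref{stab:K} for $z\Delta_{g_0}z$ and substitute.

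The paper never expands $\sigma_2(A_{g_0}+T)$. Instead it quotes Case's formula for $\int_M zL_{\sigma_2}(z,z,z)\,\dd v_{g_0}$ with $A_{g_0}=\frac\kappa2g_0$. It then uses conformal covariance to identify this with $a^3\int_M\sigma_2(A_g)\,\dd v_g$, where $a=(n-4)/4$, and reads off
\[
 \int_M\sigma_2(A_g)\,\dd v_g=\int_M\Bigl[\sigma_2(A_{g_0})z^4-\tfrac{16}{(n-4)^2}z\Delta_{g_0}z\,|\nabla z|^2-\tfrac{16(n-2)}{(n-4)^3}|\nabla z|^4+\tfrac{4(n-1)\kappa}{n-4}z^2|\nabla z|^2\Bigr]\dd v_{g_0}.
\]

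You derive this by hand, and your Step~3, which you left as a plan, does close. Set $m=n-4$ and combine three facts:
\begin{itemize}
\item the Newton-divergence identity you state;
\item the identity $\int e^{mf}\nabla^2f(\nabla f,\nabla f)=-\frac12\int e^{mf}(\Delta f+m|\nabla f|^2)|\nabla f|^2$;
\item the relation $\int e^{mf}(-\Delta f)=m\int e^{mf}|\nabla f|^2$, applied to the linear term.
\end{itemize}
Together they give
\[
 \int_M\sigma_2(A_g)\,\dd v_g=\int_Me^{(n-4)f}\Bigl[\sigma_2(A_{g_0})+\tfrac{(n-1)(n-4)\kappa}{4}|\nabla f|^2-\tfrac{n-4}{4}\Delta f\,|\nabla f|^2-\tfrac{(n-3)(n-4)}{8}|\nabla f|^4\Bigr]\dd v_{g_0}.
\]
Under $f=\frac4{n-4}\log z$ this becomes the previous display, and your grouping then yields exactly the coefficients $\frac{32}{(n-4)^3}$ and $\frac{2(n-2)\kappa}{n-4}$.

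There is one slip. The second-order term you group is $\Delta(z^2)|\nabla z|^2$, equivalently $z\Delta z\,|\nabla z|^2$, not $z^2\Delta(z^2)|\nabla z|^2$. Every term must be homogeneous of degree four in $z$.

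Each route has its advantage:
\begin{itemize}
\item Your route is self-contained. It shows directly why no Weyl term or local conformal flatness is needed: the background enters only through $A_{g_0}$ and through $\Ric_{g_0}$ in the commutator.
\item The paper's route is shorter. It relies on matching Case's normalization of $L_{\sigma_2}$ and on his Remark~2.2 for backgrounds that are not locally conformally flat.
\end{itemize}
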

\begin{proof}[Specialization of Case's formula]
Equation~\eqref{stab:K} is the conformal trace formula
\eqref{id:conformal-trace} with $u=z^{4/(n-4)}$.
For the energy, use~\cite[Lemma~2.1 and Remark~2.2]{Case21} with
$A_{g_0}=\frac\kappa2g_0$ and first Newton tensor
$T_1=\frac{(n-1)\kappa}{2}g_0$.
To make the normalization explicit, put $a=(n-4)/4$ and let
$L_{\sigma_2}$ be Case's operator for the background $g_0$.
Its conformal covariance gives
\[
 \int_M zL_{\sigma_2}(z,z,z)\,\dd v_{g_0}
 =a^3\int_M\sigma_2(A_g)\,\dd v_g.
\]
Thus we integrate the cited formula on the closed manifold, with no
boundary terms, and divide by $a^3$. This gives
\[
\begin{split}
 \int_M\sigma_2(A_g)\,\dd v_g
 =\int_M\bigg[&\sigma_2(A_{g_0})z^4
 -\frac{16}{(n-4)^2}z\Delta_{g_0}z|\nabla z|^2
 -\frac{16(n-2)}{(n-4)^3}|\nabla z|^4\\
 &+\frac{4(n-1)\kappa}{n-4}z^2|\nabla z|^2\bigg]\dd v_{g_0}.
\end{split}
\]
Since $\Delta_{g_0}(z^2)=2z\Delta_{g_0}z+2|\nabla z|^2$,
\eqref{stab:K} can be solved for the Laplacian term as
\[
 z\Delta_{g_0}z
 =-\frac{n-4}{4}z^{2n/(n-4)}\sigma_1(A_g)
  -\frac n{n-4}|\nabla z|^2
  +\frac{n(n-4)\kappa}{8}z^2.
\]
Substitution gives the scalar-curvature term in
\eqref{stab:positiveenergy}. The coefficients of $|\nabla z|^4$ and
$z^2|\nabla z|^2$ become, respectively,
\[
 \frac{16n-16(n-2)}{(n-4)^3}=\frac{32}{(n-4)^3},\qquad
 \frac{(4(n-1)-2n)\kappa}{n-4}=\frac{2(n-2)\kappa}{n-4}.
\]
This gives \eqref{stab:positiveenergy}.
\end{proof}

\begin{proof}[Proof of Theorem~\ref{stab:global}]
Write $\mathcal D(g)\coloneqq\mathcal F_2(g)-\mathcal F_2(g_0)$ for the
energy deficit. All constants depend only on the fixed background.
Integrals without a displayed measure are taken with respect to
$\dd v_{g_0}$, and $\|\cdot\|_p$ denotes the $L^p(g_0)$ norm.

\paragraph{\textbf{Step 1. Control of the Yamabe factor.}}
Use the Yamabe conformal factor
\[
 w\coloneqq z^{2(n-2)/(n-4)}.
\]
Then $\int_M w^{2n/(n-2)}\,\dd v_{g_0}=1$ and $g=w^{4/(n-2)}g_0$. In particular,
$\mathcal Y(w)=2(n-1)\mathcal F_1(g)$.
Theorem~\ref{main:quotient}\textup{(ii)} applies when the background is not
locally conformally flat. Otherwise, the positive Einstein background
is a spherical space form and \eqref{sharp:spherical-known} applies.
In either case,
\[
 1\le\frac{\mathcal F_1(g)}{\mathcal F_1(g_0)}
 \le\left(\frac{\mathcal F_2(g)}{\mathcal F_2(g_0)}\right)^{\frac{n-2}{n-4}}.
\]
Since $\mathcal F_2(g_0)>0$, the comparison gives $\mathcal D(g)\ge0$.
For $\mathcal D(g)\le1$, write $E_0=\mathcal F_2(g_0)>0$ and use
$\mathcal Y(1)=R_{g_0}$ to obtain
\[
 0\le\mathcal Y(w)-R_{g_0}
 \le R_{g_0}\left[
 \left(1+\frac{\mathcal D(g)}{E_0}\right)^{(n-2)/(n-4)}-1\right]
 \le C_0\mathcal D(g).
\]
The last bound follows from the mean value theorem on the fixed
interval $[1,1+E_0^{-1}]$. Choose
$\delta_{\mathrm{stab}}=\min\{1,\delta_Y/C_0\}$.
Lemma~\ref{stab:yamabe} then gives
\begin{equation}
\begin{gathered}
 \mathcal D(g)\le\delta_{\mathrm{stab}}\\
 \Longrightarrow\quad
 \|w-1\|_{H^1}^2\le C\mathcal D(g).
\end{gathered}
 \label{stab:wcontrol}
\end{equation}

\paragraph{\textbf{Step 2. Control of the gradient in $L^4$.}}
Assume $\mathcal D(g)\le\delta_{\mathrm{stab}}$.
Since $\Vol_g(M)=1$, the left-hand side of the energy identity is
$\mathcal F_2(g)$. Discarding its other nonnegative gradient terms,
using $\sigma_1(A_g)>0$, gives
\begin{equation}
 \frac{32}{(n-4)^3}\int_M|\nabla z|^4\,\dd v_{g_0}
 \le \mathcal D(g)+\mathcal F_2(g_0)\left(1-\int_Mz^4\,\dd v_{g_0}\right).
 \label{stab:quarticpre}
\end{equation}
We must therefore control the loss in the $L^4$ mass of $z$.
Since $z^4=w^{2(n-4)/(n-2)}$, we use the volume normalization to add
a term with zero integral:
\[
 1-\int_Mz^4
 =\int_M\left[
 1-w^{2(n-4)/(n-2)}
 +\frac{n-4}{n}\bigl(w^{2n/(n-2)}-1\bigr)\right].
\]
For every $t\ge0$,
\begin{equation}
\begin{split}
 0\le{}&1-t^{2(n-4)/(n-2)}
       +\frac{n-4}{n}\bigl(t^{2n/(n-2)}-1\bigr)\\
 \le{}&C\bigl((t-1)^2+|t-1|^{2n/(n-2)}\bigr).
\end{split}
 \label{stab:scalarremainder}
\end{equation}
For the lower bound, apply the tangent-line inequality for the
concave power $s^{(n-4)/n}$ at $s=1$, with
$s=t^{2n/(n-2)}$. For the upper bound, the expression and its first derivative vanish
at $t=1$, so its second-order Taylor expansion gives a bound by
$C(t-1)^2$ on $[1/2,3/2]$. On $[0,1/2]$ it is bounded and
$(t-1)^2\ge1/4$. On $[3/2,\infty)$ it is bounded by
$Ct^{2n/(n-2)}\le C|t-1|^{2n/(n-2)}$. These three bounds give the
claimed estimate.
The split into these intervals also covers $n=5$, when the first
exponent is $2/3$.
The compact-manifold Sobolev and Poincar\'e--Sobolev inequalities
used here and below take the following form~\cite[Chapter~3]{Hebey96}:
\begin{equation}\label{stab:sobolev-tools}
 \|h\|_{2n/(n-2)}\le C\|h\|_{H^1},\qquad
 \left\|h-\int_Mh\,\dd v_{g_0}\right\|_{4n/(n-4)}
       \le C\|\nabla h\|_4.
\end{equation}
They apply to every smooth $h$ on the fixed connected background;
the average has this form because $\Vol_{g_0}(M)=1$.
By \eqref{stab:wcontrol} and the first inequality in
\eqref{stab:sobolev-tools},
\begin{align*}
 0\le1-\int_Mz^4
 &\le C\left(\|w-1\|_2^2+
             \|w-1\|_{2n/(n-2)}^{2n/(n-2)}\right)\\
 &\le C\Bigl(\mathcal D(g)
       +\mathcal D(g)^{n/(n-2)}\Bigr)\\
 &\le C\mathcal D(g).
\end{align*}
Here the last inequality uses $\mathcal D(g)\le1$ and
$n/(n-2)>1$. Substitution into \eqref{stab:quarticpre} yields
\begin{equation}
 \int_M|\nabla z|^4\,\dd v_{g_0}\le C\mathcal D(g).
 \label{stab:gradientfour}
\end{equation}

\paragraph{\textbf{Step 3. Control of both Sobolev distances.}}
The relation $w=z^{2(n-2)/(n-4)}$ gives
$|z-1|\le|w-1|$: on $0<z\le1$ one has $w\le z$, and on $z\ge1$
one has $w\ge z$. Hence $\|z-1\|_2^2\le C\mathcal D(g)$.
The derivative of the inverse power relating $z$ to $w$ is unbounded
near $z=0$, so the gradient requires a separate estimate on
$\{z<1/2\}$. On its complement,
\[
 |\nabla z|=\frac{n-4}{2(n-2)}z^{-n/(n-4)}|\nabla w|
 \le C|\nabla w|,
\]
so the squared gradient integral there is at most $C\mathcal D(g)$.
On $\{z<1/2\}$, let $\eta=1-2^{-2(n-2)/(n-4)}>0$. Then
$|w-1|\ge\eta$, and thus
\[
 \eta^2\Vol_{g_0}(\{z<1/2\})
 \le\int_{\{z<1/2\}}|w-1|^2\,\dd v_{g_0}
 \le C\mathcal D(g).
\]
H\"older's inequality and \eqref{stab:gradientfour} now give
\begin{align*}
 \int_{\{z<1/2\}}|\nabla z|^2\,\dd v_{g_0}
 &\le\Vol_{g_0}(\{z<1/2\})^{1/2}
     \left(\int_{\{z<1/2\}}|\nabla z|^4\,\dd v_{g_0}\right)^{1/2}\\
 &\le C\mathcal D(g).
\end{align*}
It follows that $\|z-1\|_{H^1}^2\le C\mathcal D(g)$.

For the $W^{1,4}$ norm, the critical Poincar\'e--Sobolev
inequality~\eqref{stab:sobolev-tools} gives
\[
 \left\|z-\int_Mz\,\dd v_{g_0}\right\|_{4n/(n-4)}
 \le C\|\nabla z\|_4.
\]
The $L^{4n/(n-4)}$ norm of the constant $\int_Mz\,\dd v_{g_0}$
is this same nonnegative number, because the background volume is
one. The reverse triangle inequality and $\|z\|_{4n/(n-4)}=1$ give
\[
 \left|\int_Mz\,\dd v_{g_0}-1\right|
 \le\left\|z-\int_Mz\,\dd v_{g_0}\right\|_{4n/(n-4)}.
\]
As $4n/(n-4)>4$ and $\Vol_{g_0}(M)=1$,
\[
 \|z-1\|_4
 \le\left\|z-\int_Mz\,\dd v_{g_0}\right\|_{4n/(n-4)}
    +\left|\int_Mz\,\dd v_{g_0}-1\right|
 \le C\|\nabla z\|_4.
\]
Raise this estimate to the fourth power and add
\eqref{stab:gradientfour}. We obtain
$\|z-1\|_{W^{1,4}}^4\le C\mathcal D(g)$; together with the $H^1$
estimate above, this proves \eqref{stab:estimate} when the deficit
is small.

\paragraph{\textbf{Step 4. Large deficits.}}
Suppose $\mathcal D(g)\ge\delta_{\mathrm{stab}}$.
H\"older's inequality and the normalization give $\int z^4\le1$.
Equation~\eqref{stab:positiveenergy} directly gives
$\int|\nabla z|^4\le C\mathcal F_2(g)$.
The elementary bound $|z-1|^4\le8(z^4+1)$ controls the zeroth-order
$W^{1,4}$ term. Also $\int z^2\le1$, so $\|z-1\|_2^2\le4$, and
$\|\nabla z\|_2^2\le\|\nabla z\|_4^2$. Therefore
\[
 \|z-1\|_{W^{1,4}}^4\le C\bigl(1+\mathcal F_2(g)\bigr),\qquad
 \|z-1\|_{H^1}^2\le C\bigl(1+\sqrt{\mathcal F_2(g)}\bigr).
\]
Since $\mathcal F_2(g)\ge\mathcal F_2(g_0)>0$, both right-hand sides
are bounded by $C\mathcal F_2(g)$. The fixed threshold gives
\[
 \mathcal F_2(g)
 \le\left(1+\frac{\mathcal F_2(g_0)}{\delta_{\mathrm{stab}}}\right)
       \mathcal D(g).
\]
Thus both Sobolev terms are bounded by $C\mathcal D(g)$ also in
this range. Combining the two ranges and taking the reciprocal of
the resulting constant gives $c_*>0$ in \eqref{stab:estimate}.

\end{proof}

\begin{remark}[Exponents and the spherical case]
\label{stab:exponents}
If $\psi\ne0$ is smooth and $\int_M\psi\,\dd v_{g_0}=0$, set
$z_t\coloneqq (1+t\psi)/\|1+t\psi\|_{4n/(n-4)}$. Since $z_t\to1$ in
$C^2$, the corresponding metric has positive scalar curvature for
sufficiently small $|t|$. To track the normalization, put
$p=4n/(n-4)$ and $m_2=\int_M\psi^2\,\dd v_{g_0}$. The mean-zero
condition gives
\[
 \|1+t\psi\|_p=1+\frac{p-1}{2}m_2t^2+o(t^2),\qquad
 \int_Mz_t^4\,\dd v_{g_0}
 =1-\frac{32}{n-4}m_2t^2+o(t^2).
\]
Moreover, $\nabla z_t=t\nabla\psi+O(t^3)$ and the right-hand side
of \eqref{stab:K} is $n\kappa/2+O(t)$ uniformly on $M$.
In \eqref{stab:positiveenergy}, the two terms quadratic in the
gradient therefore have combined leading coefficient
$2n\kappa/(n-4)+2(n-2)\kappa/(n-4)$, whereas the fourth-power
term is $O(t^4)$. Together with the displayed mass expansion, this gives
\begin{align*}
 \mathcal F_2(z_t^{8/(n-4)}g_0)-\mathcal F_2(g_0)
 ={}&\frac{4(n-1)\kappa}{n-4}t^2\\
 &\times\int_M\bigl(|\nabla\psi|^2-n\kappa\psi^2\bigr)\dd v_{g_0}+o(t^2).
\end{align*}
The strict spectral gap makes this coefficient positive. Also
$z_t-1=t\psi+O(t^2)$ in $H^1$, so
$\|z_t-1\|_{H^1}^2=t^2\|\psi\|_{H^1}^2+o(t^2)$.
For every $0<q<2$, the energy deficit divided by
$\|z_t-1\|_{H^1}^{q}$ therefore tends to zero as $t\to0$.
Thus the quadratic power of the $H^1$ distance cannot be replaced
by a smaller one. We do not prove that the fourth power in the $W^{1,4}$ term is optimal.
On a round sphere one must measure distance to the conformal family of
minimizers. That problem, including sharp stability exponents, is treated
by Frank--Peteranderl \cite{FP24}. Here Theorem~\ref{stab:global} applies
to a fixed nonround positive Einstein background.
\end{remark}

\section{Weyl curvature perturbations}\label{sec:further}
\label{weyl:section}

Let $(M^n,g_0)$ be closed and connected, $n\ge4$, with $g_0$
Einstein and $R_{g_0}>0$, and let $g=u^2g_0$ with $u$ smooth and
positive. Assume that the squared Weyl norm $|W_{g_0}|_{g_0}^2$ is
constant.
This holds when the background is locally
homogeneous, but need not hold for a general Einstein metric.
We use the standard conformal transformation law for the Weyl
norm~\cite[Chapter~1]{Besse87}:
\begin{equation}\label{weyl:covariance}
|W_g|_g^2=|W_{g_0}|_{g_0}^2u^{-4}.
\end{equation}

\subsection{The modified \texorpdfstring{\(\sigma_2\)}{sigma-two} equation}

Chang--Gursky--Yang~\cite{CGY03} studied the existence of metrics
with prescribed \(\sigma_2\) and Weyl curvature in dimension four.
On the backgrounds considered here, the following classification is
an immediate consequence of Theorem~\ref{main:monotone} and conformal
covariance.

\paragraph{\textbf{A negative Weyl term.}}
Assume \(b<0\), \(|W_{g_0}|_{g_0}^2>0\), and
\[
\sigma_2(A_g)+b|W_g|^2=\Lambda.
\]
By \eqref{weyl:covariance}, this is \(\sigma_2(A_g)=F(u)\) with
\[
F(s)\coloneqq \Lambda-b|W_{g_0}|_{g_0}^2s^{-4},\qquad F'(s)=4b|W_{g_0}|_{g_0}^2s^{-5}<0.
\]
Theorem~\ref{main:monotone} gives constant \(u\), \(R_g>0\), and
\(g=u^2g_0\), with no sign assumption on \(\Lambda\) or \(R_g\).
The scale satisfies
\begin{equation}\label{weyl:sigma2-scale}
\Lambda=[\sigma_2(A_{g_0})+b|W_{g_0}|_{g_0}^2]u^{-4}.
\end{equation}
If \(\Lambda[\sigma_2(A_{g_0})+b|W_{g_0}|_{g_0}^2]>0\), it is uniquely given by
\[
 u=\left(\frac{\sigma_2(A_{g_0})+b|W_{g_0}|_{g_0}^2}{\Lambda}\right)^{1/4}.
\]
If \(\sigma_2(A_{g_0})+b|W_{g_0}|_{g_0}^2=\Lambda=0\), every positive scale is allowed.
In all other cases, there is no solution.
Conversely, each positive scale satisfying
\eqref{weyl:sigma2-scale} solves the equation, since both
$\sigma_2(A_g)$ and $|W_g|_g^2$ scale by $u^{-4}$.
The branch \(b|W_{g_0}|_{g_0}^2=0\) is exactly the constant-data case discussed
after Theorem~\ref{main:monotone}.

\subsection{A classical four-dimensional consequence}

Case~\cite[Introduction]{Case24} raised the question of rigidity
for mixed curvature equations with a Weyl term.
In dimension four, $6(Q_g-4\sigma_2(A_g))=-\Delta_gR_g$.
The following consequence uses this formula, the classical Obata
identity, and the fact that $s\mapsto s^{-4}$ is strictly decreasing.

\paragraph{\textbf{The four-dimensional equation.}}
Suppose \(n=4\), \(b<0\), \(|W_{g_0}|_{g_0}^2>0\), and
\[
Q_g-4\sigma_2(A_g)+b|W_g|^2=\Lambda.
\]
Then \(u\) is constant and \(g=u^2g_0\), without a scalar curvature
sign assumption. Consequently, \(R_g>0\), and
\begin{equation}\label{weyl:minus-four-scale}
\Lambda=b|W_{g_0}|_{g_0}^2u^{-4}<0.
\end{equation}
To prove this, use \(6(Q_g-4\sigma_2(A_g))=-\Delta R_g\) to write
\[
-\Delta R_g=6(\Lambda-b|W_{g_0}|_{g_0}^2u^{-4}).
\]
Let \(\bar u\coloneqq \Vol_g(M)^{-1}\int_Mu\,\dd v_g\).
Since $\int_M(u-\bar u)\,\dd v_g=0$, multiplication by
$u-\bar u$ eliminates the constant $\Lambda$.
Integration by parts on the closed manifold then gives
\[
\int_M\langle\nabla R_g,\nabla u\rangle\,\dd v_g
=-6b|W_{g_0}|_{g_0}^2\int_M(u-\bar u)u^{-4}\,\dd v_g\le0.
\]
To see the sign, symmetrize the covariance integral:
\begin{align*}
\int_M(u-\bar u)u^{-4}\,\dd v_g
={}&\frac1{2\Vol_g(M)}\int_{M\times M}
 (u(x)-u(y))(u(x)^{-4}-u(y)^{-4})\\
&\hspace{36mm}\cdot\dd v_g(x)\,\dd v_g(y)\le0.
\end{align*}
The integrand is nonpositive because $s\mapsto s^{-4}$ is
strictly decreasing, and the prefactor $-6b|W_{g_0}|_{g_0}^2$
in the integration-by-parts identity is positive.
On the other hand, Lemma~\ref{clo:pairing} gives
\[
\int_M\langle\nabla R_g,\nabla u\rangle\,\dd v_g
=2\int_Mu|E_g|^2\,\dd v_g\ge0.
\]
Equality follows in both inequalities, so the covariance integral
is zero. The integrand in its symmetrized double-integral form is
continuous and nonpositive. It must therefore vanish at every pair
$(x,y)\in M\times M$.
Strict monotonicity implies $u(x)=u(y)$ for every such pair,
so $u$ is constant. Then $R_g=u^{-2}R_{g_0}>0$, and substitution
proves \eqref{weyl:minus-four-scale}.
The case $b|W_{g_0}|_{g_0}^2=0$ is the unperturbed four-dimensional endpoint
treated in \cite[Proposition~3.2]{CWSharp}: $\Lambda=0$, and $g$ has
the homothety or round-sphere form, without a scalar-curvature sign
assumption.

\section*{Declaration of generative AI and AI-assisted technologies in the manuscript preparation process}
During the preparation of this work, the author used ChatGPT (OpenAI) to
improve the language and organization of the manuscript and to assist in
checking the consistency of the mathematical exposition. After using this
tool, the author independently reviewed, edited, and verified the content as
needed and takes full responsibility for the content of the published article.

\end{document}